\documentclass[11pt]{amsart}
\usepackage{mathabx,amsmath,amsfonts,amsthm,mathrsfs,amssymb,amscd,comment,enumerate,amsxtra,url,tikz-cd,physics,tcolorbox,graphicx, setspace, mathtools,stmaryrd}
\input{xypic}
\xyoption{all}
\usepackage{xcolor} 
\colorlet{mdtRed}{red!50!black}
\definecolor{dblue}{rgb}{0,0,.6}
\usepackage[colorlinks]{hyperref}
\usepackage[normalem]{ulem}
\usepackage{cancel,bm}
\hypersetup{linkcolor=blue,citecolor=dblue,filecolor=dullmagenta,urlcolor=mdtRed}

\newtcolorbox{mymathbox}[1][]{colback=white, sharp corners, #1}

\newtheorem{theorem}{Theorem}
\newtheorem{corollary}[theorem]{Corollary}
\newtheorem{lemma}[theorem]{Lemma}
\newtheorem{proposition}[theorem]{Proposition}
\newtheorem{definition}[theorem]{Definition}

\newtheorem*{theorem*}{Theorem}
\newtheorem*{corollary*}{Corollary}

\theoremstyle{remark}
\newtheorem{remark}[theorem]{\bf Remark}
\newtheorem{example}[theorem]{\bf Example}

\newcommand{\e}{\epsilon}
\newcommand{\Z}{\mathbb{Z}}
\newcommand{\C}{\mathbb{C}}
\newcommand{\Adash}{\acute{\mathcal{A}}}

\newcommand{\Bdash}{\acute{\mathcal{B}}}
\newcommand{\Q}{\mathcal{Q}}

\renewcommand{\P}{\mathcal{P}}
\renewcommand{\O}{\mathcal{O}}
\newcommand{\ch}{{\rm ch}}
\newcommand{\p}{\partial}

\newcommand{\mf}[1]{\mathfrak{#1}}
\newcommand{\mF}[0]{\mathcal F}

\newcommand{\mb}[1]{\mathbb{#1}}
\newcommand{\mc}[1]{\mathcal{#1}}

\newcommand{\F}{\mathbb{F}}
\newcommand{\D}{\mathbb{D}}
\newcommand{\E}{\mathbb{E}}

\newcommand{\M}{\mathcal{M}}
\newcommand{\Mdash}{\acute{\mathcal{M}}}
\newcommand{\Pidash}{\acute{\Pi}}

\newcommand{\spec}{\text{Spec}}

\newcommand{\U}{\mathcal{U}}
\newcommand{\K}{\mathcal{K}}

\newcommand{\W}{\mathcal{W}}

\newcommand{\G}{\mathcal{G}}
\newcommand{\V}{\mathcal{V}}

\newcommand{\coker}{\text{coker}}
\newcommand{\im}{\textnormal{im}\,}

\newcommand{\rk}{\textnormal{rank}}
\newcommand{\id}{\textup{id}}
\newcommand{\wt}[1]{\widetilde{#1}}
\newcommand{\wh}[1]{\widehat{#1}}

\newcommand{\EXT}[0]{\mathscr{E}xt}

\newcommand{\HOM}[0]{\mathscr{H}om}

\newcommand{\vp}[0]{\varphi}
\newcommand{\quot}[0]{\textnormal{Quot}}

\newcommand{\vir}[0]{\textnormal{vir}}
\newcommand{\inv}[0]{\textnormal{inv}}
\newcommand{\td}[0]{\textnormal{td}}
\newcommand{\End}[0]{\textnormal{End}}
\newcommand{\pa}[0]{\textnormal{pa}}

\renewcommand{\L}[0]{\mathcal L}
\newcommand{\pl}[0]{\textnormal{pl}}
\newcommand{\pe}[0]{\textnormal{pe}}
\newcommand{\chh}[0]{\textnormal{ch}^\textnormal{H}}
\newcommand{\chhE}[0]{\textnormal{ch}^{\textnormal{H},\E}}
\newcommand{\chhF}[0]{\textnormal{ch}^{\textnormal{H},\F}}
\newcommand{\SSym}{\text{SSym}}
\newcommand{\sym}{\textnormal{sym}}

\newcommand{\CH}[0]{\textnormal{CH}}

\newcommand{\ad}[0]{\textnormal{Ad}}
\newcommand{\sfM}{\mathsf M}
\newcommand{\sfMdash}{\acute{\mathsf M}}
\newcommand{\perf}[0]{\textbf{Perf}}

\newcommand{\fund}{\textnormal{fund}}
\newcommand{\PS}{\textnormal{PS}}
\newcommand{\Pic}{\textnormal{Pic}}

\renewcommand{\ss}[0]{\text{ss}}
\newcommand{\st}[0]{\text{st}}

\newcommand{\res}{\textnormal{res}}
\newcommand{\ex}{\textnormal{e}}
\newcommand{\kv}{{\mathcal K^\vee}}
\newcommand{\mL}{\mathsf{L}}
\newcommand{\du}{d^\vee}

\numberwithin{theorem}{section}

\begin{document}
	
	\title[Joyce's invariant and Virasoro Constraints for Quot schemes]{Joyce's invariant and Virasoro Constraints for \\ Quot schemes on curves}
    
    \author[Parvez Rasul]{Parvez Rasul} 
	
	\address{School of Mathematics, Korea Institute for Advanced Study, 85 Hoegiro, Dongdaemun-gu, Seoul 02455, Republic of Korea}

	\email{\href{mailto:rasulparvez@kias.re.kr}{rasulparvez@kias.re.kr}}

    \begin{abstract}
    Let $C$ be a smooth projective curve over $\mb C$ and let $E$ be a vector bundle over $C$.
    Let $\quot_{r,d}(E)$ denote the Quot scheme which parametrizes quotients of $E$ of rank $r$ and degree $d$.
    Following Joyce's recipe \cite{Joy21}, we introduce Joyce's enumerative invariant for the Quot scheme $\quot_{r,d}(E)$.
    The invariant can be viewed as a generalization of the virtual fundamental cycle of the Quot scheme. 
    We evaluate intersection pairings on the Quot scheme $\quot_{\rk(E)-1,d}(E)$ by computing its invariant explicitly.
    Following the reformulation of sheaf-theoretic Virasoro constraints
    in terms of Joyce's vertex algebra framework in \cite{BLM24},
    we give a proof of the Virasoro constraints for the Quot scheme $\quot_{r,d}(E)$.
    With the help of these constraints, 
    we compute the (virtual) intersection numbers of $f$-classes on the Quot schemes.
    \end{abstract}
    
	\maketitle
    
    \section{Introduction}
    Let $C$ be a smooth projective curve over $\mb C$ and let $E$ be a vector bundle over $C$.
    Let $r$ and $d$ be integers such that $0 < r < \rk(E)$.
    Let $\quot_{r,d}(E)$ denote the Quot scheme which parametrizes quotients of $E$ of rank $r$ and degree $d$.
    Quot schemes have played a central role in the study of vector bundles on curves, particularly in constructing and understanding the geometry of the moduli spaces.
    In general, Quot schemes are not smooth and may not even be irreducible.
    When $d \gg 0$, it is proved in \cite{PR03} that $\quot_{r,d}(E)$
    is irreducible and in \cite{GS24}
    that $\quot_{r,d}(E)$ is a local complete intersection.
    Quot schemes also arise as a compactification of the space of maps into Grassmannians.
    Thus, Quot schemes appear naturally in enumerative geometry. 
    One of the main results in this direction, due to \cite{Ber94}, shows that when the Quot scheme is irreducible, intersection pairings on the Quot scheme compute the number of certain maps from $C$ to the Grassmannian.
    However, Quot schemes are not irreducible in general and may have components of dimension larger than expected. 
    To make sense of intersection theory in such cases, 
    \cite{MO07c} constructed a virtual fundamental class for the Quot scheme 
    $$[\quot_{r,d}(E)]^\vir \in H_\bullet(\quot_{r,d}(E))$$
    and computed various virtual intersection numbers on the Quot scheme.
    Intersection theory on Quot schemes has been further developed and applied in various contexts, for example, in \cite{MO10}, \cite{MO07a}, \cite{MO07b}, and \cite{Mar25}.
    Beyond curves, Quot schemes on higher-dimensional varieties play an important role in Donaldson–Thomas theory and in the study of enumerative invariants of surfaces and threefolds. Some recent works in these directions include \cite{OP21}, \cite{AJLOP}, \cite{Boj25}, \cite{Ric20}, \cite{SR21}, and \cite{FMR21}.
    In \cite{Boj25}, the author studies the virtual fundamental classes and the tautological integrals of the punctual Quot schemes on surfaces and Calabi-Yau fourfolds,
    using the wall-crossing framework of Joyce (\cite{Joy21}).
    Joyce's framework provides a new approach to tackling computations with the virtual fundamental classes.
    In \cite{Bu23}, the author computes Joyce's invariant for
    the moduli space of semistable sheaves and computes cohomology pairings using these invariants.
    
    In this article, we study intersection theory on the Quot scheme
    $\quot_{r,d}(E)$ through the framework of Joyce \cite{Joy21}.
    We introduce an enumerative invariant for the Quot scheme $\text{Quot}_{r,d}(E)$, which we will refer to as Joyce's invariant.
    This invariant sits in the homology of a larger space
    and coincides with the pushforward of the virtual fundamental cycle $[\quot_{r,d}(E)]^\vir$.
    Further, we prove a wall-crossing formula for this invariant,
    which gives a different method to compute the virtual fundamental cycle and integrals over it.
    We compute Joyce's invariant for the Quot scheme of rank 1 subsheaves and use it to compute intersections of tautological classes on the Quot scheme $\quot_{\rk(E)-1,d}(E)$.
    
    Another interesting use of Joyce's wall-crossing formula is the establishment of Virasoro constraints, due to \cite{BLM24}.
    Given a moduli space $M$
    with a virtual fundamental class $[M]^\vir$,
    one can obtain numerical invariants, called descendents, by integrating tautological cohomology classes against the virtual fundamental class $[M]^\vir$.
    The Virasoro constraints are some universal relations among these numerical invariants.
    The Virasoro constraints first appeared in the study of 
    Gromov-Witten theory and intersection theory on the moduli space of stable curves, in Witten’s foundational paper \cite{Wit90}.
    In \cite{Wit90}, Witten conjectured that the integrals of descendents, which are cohomology classes on the moduli space of stable curves $\overline{\mc M}_{g,n}$, satisfy some explicit relations. 
    Witten's conjecture was proved in the famous work of Kontsevich \cite{Kon92} and later studied through different approaches e.g. in \cite{OP09} and \cite{Mir07}.
    In \cite{EHX97}, the authors extended the Virasoro conjecture to $\overline{\mc M}_{g,n}(X)$ for certain varieties $X$.
    We refer the reader to \cite{BLM24}, \cite{Boj24} and the references therein for further details. 
    These developments indicate that Virasoro constraints form a rich theory themselves and reflect deeper structures in enumerative geometry.
    The sheaf-theoretic Virasoro constraints were first formulated and proved in \cite{BLM24} for any smooth projective curve and certain smooth projective surfaces.
    In \cite{BLM24}, the authors considered the vertex algebra structure on the homology of a large stack containing all moduli schemes of sheaves, constructed by Joyce in \cite{Joy19},
    and proved that the Virasoro constraints are compatible with Joyce's wall-crossing.
    One of the most remarkable aspects of this approach is its apparent generality.
    One may formulate the Virasoro constraints in other contexts if Joyce’s vertex algebra framework applies.
    Further, the study of Virasoro constraints via wall-crossing methods has been followed in \cite{Boj24}, \cite{LM24} for moduli spaces of Quiver representations and in \cite{Mor25} for moduli space of parabolic bundles.
    In this article, we formulate and prove the Virasoro constraints for the Quot scheme $\quot_{r,d}(E)$ using Joyce's vertex algebra framework and the wall-crossing formula.
    The Virasoro constraints will give rise to a few useful relations among the integrals of the tautological cohomology classes.
    
    We briefly discuss the main results of the article. 
    Detailed definitions will be given in the subsequent sections.
    We use the vertex algebra introduced by Joyce,
    in \cite{Joy19, GJT22}.
    Given a (higher) moduli stack $\M$ parametrizing objects of the derived category $D^b(\text{Coh}(C))$,
    Joyce constructed a vertex algebra structure on its homology
    $V_\bullet := \wh H_{\bullet}(\M)$.
    Here the symbol $\wh{H}_\bullet$ denotes an appropriate shift in the grading of the homology.
    An important data in the vertex algebra is the translation operator $T: V_\bullet \to V_{\bullet+2}$,
    obtained using the
    $B\mb G_m$-action on $\M$.
    Associated to the vertex algebra $V_\bullet$, there is a 
    Lie algebra $V_\bullet/T(V_{\bullet-2})$,
    obtained as the quotient by the translation operator.
    Given a moduli space $M$ of semistable sheaves,
    \cite{Joy19} defines a class inside the Lie algebra,
    $$[M]_\inv \in V_\bullet/T(V_{\bullet-2})\,.$$
    In this article, we follow \cite{Joy19} to construct a similar
    invariant for $\quot_{r,d}(E)$.
    We consider a (higher) stack $\Mdash$ parametrizing morphisms
    $\vp: E^\vee \otimes U \to F$, where $F$ is a sheaf on $C$
    and $U$ is a vector space.
    Using methods of \cite[Chapter 8]{Joy19}, 
    we construct a vertex algebra $\acute V_\bullet$ whose underlying vector space is the (shifted) homology $\wh H_\bullet(\Mdash)$,
    with the data of the translation operator 
    $\acute{T}: \acute V_\bullet \to \acute V_{\bullet+2}$.
    The translation operator $\acute{T}$ is obtained using the
    $B\mb G_m$-action on $\Mdash$.
    Consequently, we will have a graded Lie algebra
    $\acute V_\bullet/\acute T(\acute V_{\bullet-2}) \cong \wh H_{\bullet}(\Mdash/B \mb G_m)$
    and an open embedding $\mf f: \quot_{r,d}(E) \hookrightarrow \Mdash/B \mb G_m$.
    Using Joyce's recipe, we show the following.
    \begin{theorem*}[Theorem \ref{theorem wall crossing for quot scheme}]
        Let $r$ and $d$ be integers such that
        $0<r<\rk(E)$.
        There is a unique class 
        $$[\quot_{r,d}(E)]_\inv \in \wh H_{0}(\Mdash/B \mb G_m)\,,$$
        which is equal to the pushforward 
        $$\mf f_*[\quot_{r,d}(E)]^\vir\,.$$
        The invariant satisfies the wall-crossing formula
        \begin{align*}
            [\quot_{r,d}(E)]_\inv
            =\sum_{\substack{n\geqslant 1, \sum \alpha_i = \alpha, \\ 
            \alpha_i=(r_i,d_i), r_i>0 \text{ for all }i,\\
            \mu_{\min}(E^\vee) \leqslant d_1/r_1\leqslant\cdots\leqslant d_n/r_n}} &
            \frac{(-1)^{n}}{\prod_{i=1}^l (a_i - a_{i-1})!} \cdot \\
            &  \left[\left[ \cdots
     \left[ \left[\mathsf e^{((0,0),E^\vee)},\sfM_{\alpha_1} \right], \sfM_{\alpha_2} \right],\dots \right], \sfM_{\alpha_n} \right]\,.
    \end{align*}
    where $\mathsf e^{((0,0),E^\vee)}$ is the class of the single point set $\{(0,\C,0)\}$ in $\wh H_{0}(\Mdash/B \mb G_m)$,
    $\sfM_{(r_i,d_i)}$ denotes the (Joyce's) invariant
    $[M_{(r_i,d_i)}]_\inv$
    for the moduli space of semistable sheaves of rank $r_i$ and degree $d_i$.
    The numbers $0=a_0<\cdots<a_l=n$ are defined such that for any $0<i<n$, we have $d_i/r_i<d_{r+1}/r_{i+1}$ if and only if $i=a_k$ for some $0<k<l$.
    \end{theorem*}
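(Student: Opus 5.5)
We would follow Joyce's pair-wall-crossing strategy \cite[Ch.~8--10]{Joy21}, adapted to the auxiliary category of triples $(E^\vee\otimes U \xrightarrow{\vp} F)$ with $U$ a vector space. We regard such triples as objects of an abelian category $\mathcal A'$ of class $((\dim U), \alpha)$ with $\alpha=(r,d)$, and note that $\Mdash$ is its (higher) moduli stack. The first step is to introduce a one-parameter family of weak stability conditions $\tau_t$ on $\mathcal A'$. For objects with $U=0$, $\tau_t$ restricts to slope stability on sheaves; the parameter $t$ prices the framing vector $U$ against slope. We must check two things. For $t\gg0$ (the ``Quot chamber''), a triple with $\dim U=1$ is $\tau_t$-semistable iff it is stable iff $\vp:E^\vee\to F$ is surjective; this is the same as the Quot condition, after dualizing the quotient $E\to Q$ to a map $E^\vee\to F$ with torsion-free $F$ of rank $r$. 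For $t$ below $\mu_{\min}(E^\vee)$ (the empty chamber), no triple with $\dim U=1$ and $r>0$ is semistable. Since stable equals semistable in the Quot chamber, $\quot_{r,d}(E)$ embeds as an open substack of $\Mdash/B\mb G_m$; this is the map $\mf f$.

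Next we construct the obstruction theory and the class. The relative deformation complex $\mathrm{RHom}$ in $\mathcal A'$ restricted to $\quot_{r,d}(E)$ agrees with the Marian--Oprea obstruction theory $\mathrm{RHom}(S,Q)$. So Joyce's recipe, \cite[Assumption 5.1--5.2]{Joy21} plus properness of the Quot scheme, defines $[\quot_{r,d}(E)]_\inv:=\mf f_*[\quot_{r,d}(E)]^\vir$ in degree $0$ of the shifted homology. The shift is set up so that the virtual dimension is absorbed, and uniqueness follows from the definition in the stable case. One must verify that the Lie bracket on $\wh H_\bullet(\Mdash/B\mb G_m)$, which comes from the vertex algebra of \cite[Ch.~8]{Joy19}, is compatible with the pair category. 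This is a formal extension of Joyce's construction with the extra factor $E^\vee\otimes U$ in the Ext pairing.

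The main step, and the main obstacle, is the wall-crossing from $t\gg0$ to the empty chamber. We would apply Joyce's dominant-stability wall-crossing theorem \cite[Thm.~5.8]{Joy21}, in the form used for pairs: at each wall $t=d_i/r_i$ a triple of class $(1,\alpha)$ has a Harder--Narasimhan type filtration with one framed piece and unframed semistable sheaves. Iterating, and terminating when the framed piece reaches class $(1,(0,0))$, whose only object is $(0,\C,0)$ with class $\mathsf e^{((0,0),E^\vee)}$, yields nested brackets with the unframed invariants $\sfM_{\alpha_i}$. We expect the combinatorial coefficients $\frac{(-1)^n}{\prod (a_i-a_{i-1})!}$ to come from Joyce's $\tilde U$-coefficients specialized to this dominant-rank-one situation. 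Consecutive equal slopes group into blocks of sizes $a_k-a_{k-1}$, and the factorials correct for symmetrization within a block, as in the Joyce--Song pair formula. Checking Joyce's technical assumptions is the hard part: we need finiteness of the set of HN types, which follows from the bound $\mu_{\min}(E^\vee)\le d_1/r_1$ since a subsheaf receiving a nonzero map from $E^\vee$ has controlled slope, together with properness of the intermediate moduli spaces of semistable triples. We would reduce the coefficient computation to the rank-one-framing case in \cite[\S 9--10]{Joy21} (the analogue of Bradlow or Joyce--Song pairs) rather than recomputing it.
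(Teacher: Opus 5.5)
\textbf{There is a genuine gap at the step you yourself call the main one.} The framework is right: Joyce's wall-crossing for $E^\vee$-pairs, matching the pair obstruction theory with Marian--Oprea's, and identifying the invariant with $\mf f_*[\quot_{r,d}(E)]^\vir$ in the stable $=$ semistable case. But the wall-crossing formula with coefficients $(-1)^n/\prod_i(a_i-a_{i-1})!$ is asserted, not derived.

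You never define $\tau_t$, and the dominance shortcut you invoke is not available. Take the natural family in which the framing carries slope $t$, which is what your empty-chamber claim suggests. There a framed class $((r,d),1)$ lies above its sheaf class $(r,d)$ in the Quot chamber and below it in the empty chamber. So neither endpoint dominates the other, and you must compute Joyce's general coefficients $\tilde U$, which you explicitly postpone. Iterating Harder--Narasimhan filtrations is a Hall-algebra heuristic; in the homological setting it has to be replaced by exactly this computation.

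The paper makes the computation tractable by splitting the crossing into two steps.
\begin{enumerate}
\item It moves $\mu^\delta$ only from $\delta>\delta_\infty$ to $\delta<1/r$. That chamber is not empty: its semistable locus is $P(E,\alpha)$ (Proposition \ref{proposition stability 0+ chamber}). In Joyce's Definition 3.10, the condition $\tilde\tau(\gamma_i)=\tilde\tau(\alpha)$ forces every $\gamma_i$ to contain the framed class, so $l=1$. The $S$-coefficient vanishes unless the slopes satisfy $d_1/r_1\geqslant\cdots\geqslant d_j/r_j>d_{j+1}/r_{j+1}<d_{j+2}/r_{j+2}\leqslant\cdots$. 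One may therefore take $\tilde U$ supported on sequences with the framed class first, with coefficient $(-1)^{n-1}/\prod_i(a_i-a_{i-1})!$ and $\mu_{\min}(E^\vee)\leqslant d_1/r_1<d_2/r_2\leqslant\cdots$. The lower bound comes from non-emptiness of $P(E,\alpha_1)$.
\item A separate crossing inside the fixed-slope subcategory $\Adash_p$, using the Joyce--Song type conditions $\mu^\delta_p$, gives $[P(E,\alpha_1)]_\inv=\sum_m\frac{(-1)^m}{m!}[[\cdots[\mathsf e^{((0,0),E^\vee)},\sfM_{\beta_1}],\cdots],\sfM_{\beta_m}]$, with all $\beta_j$ of slope $d_1/r_1$ (Proposition \ref{proposition wall-crossing of pairs}).
\end{enumerate}
Substituting the second into the first, the strict inequality $d_1/r_1<d_2/r_2$ from step 1 and the $1/m!$ for the equal-slope block from step 2 combine into exactly $\prod_i(a_i-a_{i-1})!$ over non-decreasing sequences. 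A one-step crossing might be made to work, but only after you specify the family, check Joyce's Assumptions 5.1--5.3 for it, and compute $\tilde U$ across all walls at once.

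\textbf{Your description of the Quot chamber is also wrong.} For $\delta>\delta_\infty$, a pair of type $(\alpha,1)$ is semistable iff $F$ is pure and $\vp$ is \emph{generically} surjective (Proposition \ref{proposition stability and gg}). Its proof needs the explicit bound $\delta_\infty$, built from the Harder--Narasimhan filtration of $F$ and $\mu_{\min}(E^\vee)$. It is not ``iff $\vp$ is surjective''. The map $E^\vee\to K^\vee$ is surjective exactly when $E/K$ is locally free. Your chamber would therefore be only the open, in general non-proper, locus of locally free quotients, to which neither Joyce's theorem nor the comparison with $[\quot_{r,d}(E)]^\vir$ applies. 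Finally, $F=K^\vee$ has rank $\rk(E)-r$, not $r$, so the relevant class is $\alpha=(\rk(E)-r,d-\deg(E))$.
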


Using an explicit description of the algebra $H^{\bullet}(\Mdash)$,
given by \cite{gros20}, and
using the wall-crossing formula, we compute the invariant
$[\quot_{\rk(E)-1,d}(E)]_\inv$ in Theorem \ref{theorem invariant quot N-1}.
We use the results and computations from \cite{Bu23}, 
where a similar computation is done when $E$ is a line bundle.
\begin{theorem*}[Theorem \ref{theorem invariant quot N-1}]
    Let $N =\rk(E)$ and $\nu = \chi(E)$.
    For any degree $d$, define $\du = d- \deg(E)$.
    The invariant 
    $$[Q_{N-1,d}(E)]_\inv \quad  \in \quad \wh H_0(\Mdash)/\im \acute T$$ is given by
    \begin{equation*}
    [Q_{N-1,d}(E)]_\inv = 
    \res_{z=0} \left( \frac{1}{z^{\nu+N\du}} \cdot \rho(z) \cdot \sigma\left(\frac{N}{z} -s_{1,2,2}\right)\right) + \textnormal{im} \acute{T}\,,
    \end{equation*}
    where $\rho$ and $\sigma$ are defined in \eqref{equation definition rho}
    and $s_{j,k,l}$'s are tautological homology classes defined in Section \ref{section computing invariant}.
\end{theorem*}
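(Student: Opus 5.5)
Since $r = N-1$ forces every $r_i > 0$ in the decomposition $\alpha = (N-1,d) = \sum \alpha_i$ to be at most $N-1$, the plan is to evaluate the wall-crossing formula of Theorem \ref{theorem wall crossing for quot scheme} term by term. The wall-crossing invariants $\sfM_{(r_i,d_i)}$ of semistable sheaves on $C$ are known from \cite{Bu23}: in terms of the tautological generators they are residue expressions built from the rank $0$ and rank $1$ pieces. Rather than handle general $n$, I would reduce to a single commutator. The rank-$(N-1)$ quotient of $E$ corresponds, dually, to a rank-one subsheaf $L \subset E$ with $\deg L = \deg(E) - d = -\du$. In the stack $\Mdash$ this means that, after a suitable change of stability (varying the parameter on the pair $E^\vee\otimes U\to F$), the invariant should be expressed through one bracket $[\mathsf e^{((0,0),E^\vee)}, \sfM_{(N-1,d)}]$ plus terms that vanish. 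The first step is therefore to verify that all nested brackets with $n\geq 2$ cancel or lie in $\im \acute T$. I expect this to follow from the explicit form of $\sfM_{\alpha}$ in \cite{Bu23}, where the rank-$1$ classes are $e^{\alpha}$-translates of a Jacobian fundamental class.

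Second, I would make the Lie bracket explicit using the description of $H^\bullet(\Mdash)$ from \cite{gros20}. The cohomology is freely generated by the Künneth components of the Chern characters of the universal $F$ and $U$, so the homology is the dual symmetric algebra spanned by monomials in the classes $s_{j,k,l}$. The bracket $[\mathsf e^{((0,0),E^\vee)}, -]$ is the residue at $z=0$ of $Y(\mathsf e, z)$ applied to the class. The vertex operator factorizes as $z^{\chi(\ldots)}$ times the exponential of the Euler-form pairing, using $\chi(E^\vee\otimes U, F)$ and Riemann--Roch on $C$. This produces the prefactor $z^{-(\nu + N\du)}$, since the relevant Euler characteristic is $\chi(E\otimes L^{-1})$-type, equal to $\nu + N\du$ up to sign. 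It also produces the translation-type exponential, which after summing should give $\sigma(N/z - s_{1,2,2})$: the operator $e^{zT}$ shifts $s_{1,2,2}$, and the factor $N$ comes from $\rk(E)$ in the pairing with the point class of $C$. The factor $\rho(z)$ should collect the contributions from $H^1(C)$, i.e.\ the Jacobian/theta part, exactly as in the line bundle case of \cite{Bu23}.

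Third, I would compare with the case $\rk(E)=1$ treated in \cite{Bu23}. There the analogous formula is known, and passing to general $E$ only changes the pairing through $\ch(E^\vee)$. By Riemann--Roch this enters only via $N$ and $\deg E$, which explains why the result depends on $E$ only through $N$, $\nu$ and $\du$.

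The main obstacle will be the first step: showing that the higher-order brackets collapse, and keeping the combinatorial coefficients $1/\prod (a_i - a_{i-1})!$ under control. If direct cancellation fails, I would instead show that both sides satisfy the same recursion in $d$. As a fallback, I would compute $\mf f_*[\quot_{N-1,d}(E)]^\vir$ directly. This is feasible because $\quot_{N-1,d}(E)$ is a projective bundle $\mathbb P(\pi_*(E\otimes\mathcal P))$ over $\textnormal{Pic}^{-\du}(C)$ for large $d$, whose virtual class is an explicit Euler class. One would then use deformation invariance in $E$ and the wall-crossing formula to extend the result to all $d$.
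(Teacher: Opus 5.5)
There is a genuine gap, and it is in your first step. You feed the wall-crossing formula of Theorem \ref{theorem wall crossing for quot scheme} the class $\alpha=(N-1,d)$, which is the type of the \emph{quotient}. That theorem is stated for $\alpha=(\rk(E)-r,\,d-\deg(E))$, the type of $\K^\vee$, so for $r=N-1$ one has $\alpha=(1,\du)$. Every $\alpha_i=(r_i,d_i)$ in the sum has $r_i>0$ and $\sum r_i=1$, so the sum has exactly one term, $n=1$ with $\alpha_1=(1,\du)$. It therefore collapses identically to
$$[\quot_{N-1,d}(E)]_\inv=-\bigl[\mathsf e^{((0,0),E^\vee)},\sfM_{(1,\du)}\bigr].$$
Equivalently, in rank one there is no wall, so $Q(E,(1,\du))=P(E,(1,\du))$, and Proposition \ref{proposition wall-crossing of pairs} with $m=1$ gives the same identity.

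So the ``main obstacle'' you name does not exist, and the reduction you propose in its place is wrong. With $\alpha=(N-1,d)$ the formula computes the invariant of a different Quot scheme, $\quot_{1,d+\deg E}(E)$, in the component $\Mdash^\pl_{(N-1,d),1}$. In general nothing makes its $n\geq 2$ brackets cancel. Moreover, $[\mathsf e^{((0,0),E^\vee)},\sfM_{(N-1,d)}]$ does not lie in $\Mdash^\pl_{(1,\du),1}$, which is where $[\quot_{N-1,d}(E)]_\inv$ lives. Your own remark that the kernel is a line bundle of degree $-\du$ already points to $\sfM_{(1,\du)}$. Once that is fixed, the recursion-in-$d$ and projective-bundle fallbacks are unnecessary.

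With the correct bracket, the remaining input is the Jacobian class $\sfM_{(1,\du)}=\ex^{((1,\du),0)}\cdot\sigma(-s_{1,2,2})+\im\acute T$ from \cite[Theorem 4.2]{Bu23}. This form is valid because the normalization map of \cite{Bu23} acts trivially on polynomials in $s_{j,1,1}$ and $s_{1,2,2}$. One then computes $\res_{z=0}\acute Y(\mathsf e^{((0,0),E^\vee)},-z)\sfM_{(1,\du)}$ via \eqref{equation description of Y}.

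Your second step has the right skeleton: the prefactor $z^{-(\nu+N\du)}$ from $\acute\chi_{\sym}=-(\nu+N\du)$ is correct. However, it misattributes the two factors.
\begin{enumerate}
\item On the point class only $\partial_{+,0,0}=1$ is nonzero. So, on this input, the Euler-form exponential reduces to $\exp\bigl(-\tfrac{N}{z}\,\partial/\partial s_{1,2,2}\bigr)$, coming from $\acute M_{+,0}^{1,2}=-N$. It is this operator, not $e^{z\acute T}$, that turns $\sigma(-s_{1,2,2})$ into $\sigma(N/z-s_{1,2,2})$.
\item The factor $\rho(z)$ is $e^{-z\acute T}$ applied to the point class $\mathsf e^{((0,0),E^\vee)}$, a series in the $\V$-variables $s_{+,0,l}$ only. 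The $H^1(C)$ (theta) contribution is not in $\rho$; it is already inside $\sigma$ through the products $s_{j,1,1}s_{j+g,1,1}$ of the Jacobian class.
\end{enumerate}
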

We use this invariant to compute virtual intersections on $\quot_{\rk(E)-1,d}(E)$.
Let 
$$p_C : C \times \quot_{r,d}(E) \to C \quad \text{ and }
\quad p_Q : C \times \quot_{r,d}(E) \to \quot_{r,d}(E)$$
denote the projection maps.
There is a universal exact sequence on $C \times \quot_{r,d}(E)$,
$$0 \to \K \to p_C^*E \to \Q \to 0\,.$$
Let 
$$\{1=\e_{1,0}, \e_{1,1}, \dots,\e_{2g,1},\e_{1,2}\}$$
be a symplectic basis for the cohomology ring of $C$ such that $\e_{*,k} \in H^k(C;\mb C)$.
Consider the Kunneth decomposition of the Chern classes of the dual of the universal kernel,
$$\qquad  c_i(\K^\vee) = a_i \otimes 1 + \sum_{j=1}^{2g} b_i^j \otimes \e_{j,1} + f_i \otimes \e_{1,2} \qquad \text{ for }i=1,\dots,\rk(E)-r \,.$$
The virtual intersection of the $a,b,f$ classes are of interest in enumerative geometry.
Intersections of $a$-classes were studied extensively in \cite{Ber94}, when degree $d$ is large 
and $E = \O_C^{\oplus N}$ is a trivial bundle.
It is shown there that the intersection number of a top-degree monomial in the $a$-classes on the fundamental cycle counts the number of degree $d$ maps from
$C$ to the Grassmannian $G(r,N)$ which send 
fixed points on $C$ to special Schubert
subvarieties of $G(r,N)$. 
The number of such maps, also called the Gromov invariants for Grassmannians, is computed by the Vafa-Intriligator formula \cite{Int91}.
In \cite{MO07c}, 
the authors used virtual localization formula to compute the virtual intersections of $a,b$ and $f$-classes explicitly, using the $\C^*$-action on $E = \O_C^{\oplus N}$.
For any vector bundle $E$, the $a$-intersections still have a similar enumerative meaning and can be computed using the Gromov invariants for Grassmannians,
see \cite{Hol04}.
In this article, using the explicit description of the invariant $[\quot_{\rk(E)-1,d}(E)]_\inv$, we compute the virtual intersection numbers of $a$ and $b$ classes.
Note that there is no non-constant $f$-class in this case.
\begin{theorem*}[Theorem \ref{theorem intersections on Quot N-1}]
    Let $0 \leqslant s \leqslant g$ and $1 \leqslant j_1 < \cdots < j_s \leqslant g$ be integers such that the product $\prod_{i=1}^s b_1^{j_i}b_1^{j_i+g}$ is nonzero.
    Let $\rk(E) = N$ and 
    $\dim Q$ denote the expected dimension of the Quot scheme $\quot_{N-1,d}(E)$. Then
    \begin{equation*}
        \int\limits_{[\quot_{N-1,d}(E))]^\vir}^{} \,
        \left(\prod_{i=1}^s b_1^{j_i}b_1^{j_i+g}\right) \cdot 
        (a_1)^{\dim Q-s} = N^{g-s}\,.
    \end{equation*}
\end{theorem*}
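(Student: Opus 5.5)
Since $\K$ has rank $1$ when $r=N-1$, the quotient $E \to \Q$ is equivalent to the subsheaf $\K \hookrightarrow E$, i.e. to a nonzero map $\K \to E$. Dualising, this is a nonzero section of $E^\vee\otimes L$, where $L=\K^\vee$ is a line bundle of degree $e:=-\du = \deg E - d$. We therefore plan to compute the integral directly from the formula of Theorem \ref{theorem invariant quot N-1}, and to cross-check it against a geometric model.

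\textbf{Geometric model.} We use the Abel--Jacobi-type projection $\pi:\quot_{N-1,d}(E)\to \Pic^{e}(C)$, $[\K\subset E]\mapsto \K^\vee$. When $e\gg0$ this is a projective bundle $\mathbb P(\mathcal V)$, where $\mathcal V=p_{*}(E^\vee\otimes\mathcal P)$ has rank $\chi = N e + \nu$ by Riemann--Roch. Write $\mathcal P$ for the Poincaré bundle and $p$ for the projection to $\Pic^e(C)$. Then $\K^\vee = \mathcal P\otimes\O(1)$, so
\begin{equation*}
c_1(\K^\vee) = c_1(\mathcal P) + h .
\end{equation*}
With the normalization of $\mathcal P$ along a point, this gives $a_1 = h$ and $b_1^j = \theta$-type degree-one classes $\xi_j$ pulled back from the Jacobian; the product $\prod_i \xi_{j_i}\xi_{j_i+g}$ is a monomial in $\theta$-components. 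The expected dimension is $\dim Q = Ne+\nu-1+g$. We then integrate $h^{\dim Q-s}$ over $\mathbb P(\mathcal V)$. Pushing forward gives Segre classes $s_{g-s}(\mathcal V)$, and by Grothendieck--Riemann--Roch,
\begin{equation*}
\ch(\mathcal V) = N e + \nu - N\theta, \qquad \text{so} \qquad s(\mathcal V)=e^{N\theta}.
\end{equation*}
Pairing $\frac{N^{g-s}\theta^{g-s}}{(g-s)!}$ with $\prod\xi_{j_i}\xi_{j_i+g}$, and using $\int_{\Pic}\theta^{g-s}\prod_{i=1}^s\xi_{j_i}\xi_{j_i+g}=(g-s)!$ up to the sign convention, gives $N^{g-s}$. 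Since virtual integrals are independent of $d$ (deformation invariance of the invariant), this settles the formula for all $d$ once the invariant agrees with the fundamental class for $e\gg0$.

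\textbf{Route through the invariant.} The paper's own route is to take the residue formula of Theorem \ref{theorem invariant quot N-1} and pair it with the pullback of the tautological classes $a_1$ and $b_1^j$ to $\Mdash$. These are expressible through the generators of $H^\bullet(\Mdash)$ from \cite{gros20}, and they must be checked to descend modulo $\im\acute T$, i.e. to be annihilated by the $B\mathbb G_m$ direction; weight zero suffices here. The pairing with $\sigma(N/z - s_{1,2,2})$ produces a factor $(N/z)^{g-s}$ times the contributions of the $b$-pairs. The factor $\rho(z)$ and the $a_1$ power then combine with $z^{-(\nu+N\du)}$ so that the residue extracts exactly the coefficient $1$. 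This is the same computation as in \cite{Bu23} for line bundles, with $\mathrm{rk}$ replaced by $N$.

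\textbf{Main obstacle.} The hard step is the bookkeeping of signs and normalizations, in two places. First, one must identify $b_1^j$ with the correct degree-one classes, whose products $b^{j}b^{j+g}$ pair to $\pm1$. Second, one must verify that the residue picks up no spurious terms from $\rho$. I would settle both by checking the case $N=1$ against the symmetric product $C^{(e)}$, where the classical answer $\int_{C^{(e)}} \theta^{g-s}\prod \xi\xi\, x^{e-g}=1$ is known.
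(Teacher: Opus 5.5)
Your $d\gg0$ computation is correct, but the step from there to arbitrary $d$ is unjustified. Your second route is the paper's, but it leaves out its one non-formal step.

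\textbf{First route: the passage to all $d$.} Deformation invariance compares virtual classes when $E$ or $C$ varies with fixed numerical data. It gives no relation between $\quot_{N-1,d}(E)$ for different $d$: these spaces have different virtual dimensions, and the integrand $a_1^{\dim Q-s}$ changes with $d$. So you have only proved the case where $\quot_{N-1,d}(E)=\mathbb P(p_*(E\otimes\mathcal P))$ is smooth of the expected dimension. The statement concerns the virtual class for every $d$, including the range with excess components.

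There are also slips. $\K^\vee$ has degree $\du=d-\deg E$, not $-\du$, and the sections lie in $E\otimes\K^\vee$, not $E^\vee\otimes\K^\vee$. Your formulas $\rk\mathcal V=Ne+\nu$ and $\dim Q=Ne+\nu-1+g$ hold only with $e=\du$; with your stated $e=-\du$, taking $e\gg0$ would make the Quot scheme empty.

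The $d$-independence you want does hold here, but it needs an argument. One possibility is the embedding $\iota$ given by $[K\subset E]\mapsto[K\subset E(p)]$. It identifies $\quot_{N-1,d}(E)$ with the zero locus in $\quot_{N-1,d+N}(E(p))$ of the section $\K\to E(p)|_p$ of $\K^\vee|_p\otimes E(p)|_p$, whose top Chern class is $a_1^N$. Since $R\mathrm{Hom}(K,E(p)/K)=R\mathrm{Hom}(K,E/K)+K^\vee|_p\otimes E(p)|_p$, one expects $\iota_*[\quot_{N-1,d}(E)]^{\vir}=a_1^N\cap[\quot_{N-1,d+N}(E(p))]^{\vir}$. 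Iterating this reduces to $\du\gg0$, where your Segre class computation applies. That compatibility of obstruction theories must actually be proved. With it, you would have a genuinely different, Joyce-free proof; the paper avoids the issue because its residue formula for $[\quot_{N-1,d}(E)]_\inv$ holds uniformly in $d$.

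\textbf{Second route: the missing lift.} You need a lift of the integrand to $H^\bullet(\Mdash_{(1,\du),1})$ lying in $\ker\acute T^*$; only then is pairing with a class defined modulo $\im\acute T$ meaningful. The obvious lift $S_{1,0,1}$ of $a_1$ fails, since $\acute T^*S_{1,0,1}=S_{1,0,0}=1$. Moreover, pairing $S_{1,0,1}^{\dim Q-s}\prod_i S_{j_i,1,1}S_{j_i+g,1,1}$ with the residue representative gives $0$, because that representative contains no $s_{1,0,1}$.

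The paper instead uses $\zeta=\prod_i S_{j_i,1,1}S_{j_i+g,1,1}\,(S_{1,0,1}-S_{+,0,1})^{\dim Q-s}$. This lies in $\ker\acute T^*$ because $\acute T^*S_{+,0,1}=S_{+,0,0}=1$ as well. It still pulls back to the integrand, because $\mf h^*\V_1$ is trivial and hence $\mf h^*S_{+,0,l}=0$.

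Only the summand $(-S_{+,0,1})^{\dim Q-s}\prod_i S_{j_i,1,1}S_{j_i+g,1,1}$ then pairs nontrivially. It meets the term $(-z)^{\dim Q-s}s_{+,0,1}^{\dim Q-s}/(\dim Q-s)!$ of $\rho(z)$. This forces the $(N/z)^{g-s}\sigma_s$ part of $\sigma(N/z-s_{1,2,2})$, in which $\prod_i s_{j_i,1,1}s_{j_i+g,1,1}$ has coefficient $1$, giving $N^{g-s}$. So the correction term $S_{+,0,1}$, not $a_1$ itself, is what interacts with $\rho$. Your remark that ``weight zero suffices'' points this way but does not supply the class.
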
    
This provides a different approach to evaluating intersection numbers on Quot schemes via Joyce's invariant and the wall-crossing formula.
A similar explicit description of Joyce's invariant for the Quot scheme in the general case,
i.e. when $r<\rk(E)-1$, could in principle be used to compute intersection numbers on those spaces as well. 
However, determining the invariants in this generality appears to be more involved and will be taken up in future work.

We now turn to the Virasoro constraints.
The Virasoro constraints are some universal relations among the intersection numbers of $a,b$ and $f$ classes.
Following \cite{BLM24}, we formulate the Virasoro constraints using descendents, which are defined via Chern characters rather than Chern classes.
Let $\D^C$ denote the supercommutative algebra generated 
by the symbols $\chh_i(\gamma)$, for $i \geqslant 0$ and $\gamma \in H^{\bullet}(C;\mb C)$.
We call these symbols holomorphic descendents.
We can realize the descendents as cohomology classes on the Quot scheme
via the geometric realization map 
$$ \xi_{\K^\vee}: \D^C \to H^{\bullet}(\quot_{r,d}(E))$$
given by
$$ \xi_{\K^\vee}(\chh_i(\gamma)) = {p_Q}_*(\ch_{i+1-p}(\K^\vee)\,(p_C^*\gamma)) \quad \quad \text{for } \gamma \in H^{p,q}(C)\,.$$
The integrals of the descendents are enumerative invariants of a moduli space. 
The Virasoro constraints say that these numbers satisfy some explicit universal relations.
These relations are stated using certain operators 
(given in Definition \ref{definition virasoro operators}),
$$L_m : \D^C \to \D^C 
\qquad \text{ for } m \geqslant 0\,.
$$
We prove the following theorem.
\begin{theorem*}[Theorem \ref{Theorem Virasoro constraints}]
    The Quot scheme $\quot_{r,d}(E)$ satisfies the Virasoro constraints, i.e.,
    $$\int\limits_{[\quot_{r,d}(E)]^\vir} \xi_{\K^\vee} (L_m(D)) = 0 \quad \text{ for any } m\geqslant 0, D \in \D^C\,.$$
    \end{theorem*}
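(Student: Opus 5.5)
We follow the strategy of \cite{BLM24}, transported to the pair vertex algebra $\acute V_\bullet = \wh H_\bullet(\Mdash)$. The Virasoro constraints should be restated as a condition on Joyce's invariant rather than on integrals. By Theorem \ref{theorem wall crossing for quot scheme}, $[\quot_{r,d}(E)]_\inv = \mf f_*[\quot_{r,d}(E)]^\vir$. Moreover, the realization map $\xi_{\K^\vee}$ factors through the pullback along $\mf f$ of a realization $\D^C \to H^\bullet(\Mdash)$ built from the universal sheaf and the universal morphism $E^\vee\otimes U \to F$. On $\quot_{r,d}(E)$ the universal $F$ restricts to $\K^\vee$.

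We will therefore define, dually to $L_m$, operators $L_m^\dagger$ acting on $\wh H_\bullet(\Mdash)$ for $m\geqslant -1$. Following \cite{BLM24}, they are built from a conformal vector $\omega$ on the lattice-type vertex algebra $\acute V_\bullet$; this algebra is described explicitly as a generalized lattice vertex algebra via \cite{gros20}. We then set $L_{\mathrm{wt}_0} = L_0^\dagger + (\text{degree correction})$ and $\check L_m$ as in \cite{BLM24}. The statement then becomes
\[
\check L_k\big([\quot_{r,d}(E)]_\inv\big)=0 \quad (k\geqslant 0)\,,
\]
where the invariant is lifted to $\acute V_0$ modulo $\im\acute T$, i.e.\ it is a \emph{primary state} condition. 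The first step is this translation: one checks that $\int_{[\quot]^\vir}\xi_{\K^\vee}(L_m D)$ is the pairing of $D$ with $\check L_m$ of the invariant. This is a direct bookkeeping computation with Künneth components, analogous to \cite[Section~4]{BLM24}, and it uses that $\dim\quot_{r,d}(E)$ matches the weight.

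The second step uses the algebraic key fact from \cite{BLM24}. The set $P_0$ of weight-zero primary states, modulo $T$, forms a Lie subalgebra of $V_\bullet/T(V_{\bullet-2})$. This holds because $[\omega_{(1)},-]$ and the positive modes satisfy Borcherds-type commutation relations with the bracket $[u,v]=u_{(0)}v$. We extend this to $\acute V_\bullet$, including its module-like structure over the sheaf vertex algebra $V_\bullet$. The iterated brackets in the wall-crossing formula of Theorem \ref{theorem wall crossing for quot scheme} then preserve primarity, provided each ingredient is primary. The ingredients are $\mathsf e^{((0,0),E^\vee)}$ and the classes $\sfM_\alpha=[M_\alpha]_\inv$.

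The third step verifies primarity of the ingredients. For $\sfM_\alpha$, the curve case of \cite{BLM24} gives this: semistable sheaves on curves satisfy the Virasoro constraints. We must check that their conformal vector and ours are compatible under the inclusion $\M\to\Mdash$ sending $F\mapsto(0\to F)$. For $\mathsf e^{((0,0),E^\vee)}$, we check directly that the positive Virasoro modes kill a lattice vacuum-type vector $e^{\beta}$ with $\beta=((0,0),E^\vee)$. Here $L_k e^\beta=0$ for $k>0$ automatically, and $L_0e^\beta=\tfrac12(\beta,\beta)e^\beta$, which must equal the weight-zero value. The pairing $\chi$ restricted to the framing class is controlled by the pairs construction, and we expect this to vanish up to the shift built into $L_{\mathrm{wt}_0}$.

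The main obstacle is constructing $\omega$ on $\acute V_\bullet$. The pairs lattice carries a pairing that is not simply the sheaf Euler form: the framing direction $E^\vee\otimes U$ contributes asymmetric terms. So we must find a conformal element, and a choice of modification handling odd cohomology of $C$, whose modes restrict to the \cite{BLM24} operators on $\M$ and realize exactly the $L_m$ of Definition \ref{definition virasoro operators}. I would first try the Sugawara-type formula of \cite{BLM24} on the lattice part and add a rank-one correction along the framing direction. I would then fix the correction by requiring that $\mathsf e^{((0,0),E^\vee)}$ be primary of weight zero.
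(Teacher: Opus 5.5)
Your proposal has two genuine gaps. The first is the step you yourself flag as the main obstacle, and Steps 1--3 all rest on it. You never construct a conformal element on $\acute V_\bullet$, nor a realization $\D^C\to H^\bullet(\Mdash)$ under which its modes are dual to $L_m$, so ``primary'' has no meaning in $\acute V_\bullet$ yet.

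You also cannot import the primarity of $\sfM_\alpha$ from \cite{BLM24} by a compatibility check along $\M\to\Mdash$. Their conformal element is defined on the pair vertex algebra $V^\pa_\bullet$, not on $V_\bullet$, whose pairing on a curve, $\chi_{\sym}((r,d),(r',d'))=2(1-g)rr'$, is degenerate. Their Theorem 5.12 is a primarity statement in $V^\pa_\bullet$, so with a new $\omega$ on $\acute V_\bullet$ you would have to reprove it. The naive lattice construction can even fail outright: $\acute\chi_{\sym}$ on $K(\Adash)\otimes\C$ has Gram determinant $-2(1-g)\rk(E)^2$, so for $g=1$ there is no Sugawara-type vector.

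The idea you are missing is that no new conformal element is needed. Via $\upsilon_E$ (using $\upsilon_E(T^\pa_m)=T_m$) the constraints follow from the pair constraints. By the duality of \cite[Theorem 4.12]{BLM24}, these are equivalent to $\mathsf{Quot}_{r,d}(E)=\mf g_*[\quot_{r,d}(E)]^\vir$ being primary in $V^\pa_\bullet$. One then pushes the wall-crossing formula of Theorem \ref{theorem wall crossing for quot scheme} forward along $\Xi^\pl$, where $\Xi:\Mdash\to\P$ sends $(F,V,\varphi)\mapsto(E^\vee\otimes V,F)$. This is not a vertex algebra map, since $\acute{\EXT}$ and $(\Xi\times\Xi)^*\EXT^\pa$ differ by $q_2^*\V^\vee\otimes q_3^*\V$. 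That term vanishes on $\Mdash\times\M$, however, and this suffices because every bracket in the formula has a sheaf class $\sfM_{\alpha_i}$ as one entry.

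The second gap is the passage from classes modulo $\im\acute T$ to honest classes. Joyce's invariant and the wall-crossing identity exist only modulo translation. The constraints, however, concern the specific lift $\mf g_*[\quot_{r,d}(E)]^\vir$: you must pair with $L_m(D)$ for arbitrary $D$, not only for $D\in\ker\acute T^*$. Knowing that the class modulo $T$ lies in $\widecheck{\PS}$ only says that \emph{some} representative is primary. So ``$\check L_k$ of the invariant lifted modulo $\im\acute T$'' does not encode the constraints.

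The paper fixes the lift using the partial bracket $\ad:\widecheck V^\pa_\bullet\times V^\pa_\bullet\to V^\pa_\bullet$ and \cite[Lemma 5.11]{BLM24}. The operator $\chhE_1(\eta)\cap(-)$ kills $\mathsf e^{((0,0),E^\vee)}$, this vanishing is preserved by each $\ad(\sfM_{\alpha_i},-)$, and it also kills $\mathsf{Quot}_{r,d}(E)$ because $E^\vee$ is constant in the family. Since the first component has positive rank, the identity modulo $T$ lifts to an identity in $V^\pa_\bullet$. Only then does $\ad(\widecheck{\PS},\PS)\subset\PS$ \cite[Proposition 3.13]{BLM24}, together with $\sfM_{\alpha_i}\in\widecheck{\PS}$ and $\mathsf e^{((0,0),E^\vee)}\in\PS$, show that $\mathsf{Quot}_{r,d}(E)$ is primary.

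Your outline has no counterpart of this normalization step. Working in $\acute V_\bullet$ you would need an analogue, presumably built from $S_{+,0,1}$, which pulls back to zero on the Quot scheme.
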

The proof of this theorem is along the lines of \cite{BLM24}.
The main idea of the proof is the compatibility between 
Virasoro constraints and Joyce’s wall-crossing.
In \cite{BLM24}, the authors considered the vertex algebra $V^\pa_\bullet$,
constructed from the homology of a 
large stack parametrizing pairs of sheaves,
and showed that the Virasoro operators can be derived from a natural conformal element in the vertex algebra $V_\bullet^\pa$.
As a consequence, it is proved that the Virasoro constraints are compatible with Joyce's wall-crossing on $V_\bullet$ and $V^\pa_\bullet$.
We show that the Virasoro constraints for
Quot schemes can be derived using this compatibility,
viewing the wall-crossing formula for Quot schemes in $V^\pa_\bullet$.
The Virasoro constraints produce various relations among 
the integrals of $a,b$ and $f$-classes,  
which is illustrated in the final section.
Using the relations, we conclude that the $f$-classes can be eliminated from the computation of any intersection numbers.
\begin{theorem*}[Theorem \ref{theorem intersection fl times P}]
    For any top-degree polynomial $P$ in $a,b$ and $f$-classes, 
    there exists a polynomial $\wt P$, involving only $a$ and $b$-classes,
    such that
    $$ \int\limits_{[\quot_{r,d}(E)]^\vir} P(a,b,f) = \int\limits_{[\quot_{r,d}(E)]^\vir} \wt P(a,b)\,.$$
\end{theorem*}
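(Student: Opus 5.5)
We prove the statement by induction on the total power of the $f$-classes appearing in $P$, using the Virasoro constraints of Theorem \ref{Theorem Virasoro constraints} to trade each $f$-class for terms carrying fewer $f$'s. First we pass from Chern classes to descendents. By Newton's identities, every $a_i,b_i^j,f_i$ is a polynomial in the Künneth components of $\ch_k(\K^\vee)$. Concretely, $f_i$ is a combination of $\xi_{\K^\vee}(\chh_{k-1}(\e_{1,2}))$, of products of $a$-type components $\xi_{\K^\vee}(\chh_k(1))$, and of pairings of $b$-type components. It is therefore enough to show the following: for every monomial $D\in\D^C$ of the correct degree containing at least one factor $\chh_k(\e_{1,2})$ with $k\geqslant 1$, the integral $\int_{[\quot_{r,d}(E)]^\vir}\xi_{\K^\vee}(D)$ equals an integral of a descendent with strictly fewer such factors. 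The factor $\chh_0(\e_{1,2})$ needs no induction, since it realizes a constant (a rank/degree number).

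Next we choose the Virasoro operator to apply. Write $D=\chh_k(\e_{1,2})\cdot D'$. We want $m$ and an auxiliary $D''$ such that $L_m(D'')$ contains $D$ with nonzero coefficient, and all its other terms have fewer point-class descendents. In \cite{BLM24} the operator is $L_m=R_m+T_m+S_m$. Here $R_m$ raises the descendent index and acts as a derivation. $S_m$ is the operator whose leading part replaces $\chh_j(\gamma)$ by a multiple of $\chh_{j+m}(\gamma)$; in the pair/Quot setting it instead involves $\chh_{m}(\e_{1,2})$ multiplying by $\tfrac{\partial}{\partial\chh_0(\e_{1,2})}$. $T_m$ is quadratic, with terms $\chh_a\chh_b(\td_C)$. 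The natural choice is $D''=D'\cdot \chh_0(\e_{1,2})$ and $m=k$ (or $k-1$ depending on the normalization in Definition \ref{definition virasoro operators}). The term of $L_m(D'')$ coming from the part of $S_m$ (or of $R_{-1}$-type) that hits $\chh_0(\e_{1,2})$ then produces $c\cdot\chh_k(\e_{1,2})D'$ with $c\neq 0$. The $T_m$ term gives products $\chh_a\chh_b$ paired with $\td_C=1+(1-g)\e_{1,2}$; each product contains at most one point-class factor and otherwise $a$-type or $b$-type factors. The $R_m$ term acting on $D'$ keeps the number of $\e_{1,2}$ factors. Acting on $\chh_0(\e_{1,2})$ it gives $\chh_m(\e_{1,2})$, which must be combined with the previous term so that the coefficients do not cancel.

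The main obstacle is this coefficient bookkeeping. We must check that the total coefficient of $\chh_k(\e_{1,2})D'$ in $L_k(\chh_0(\e_{1,2})D')$ is nonzero; it should be something like $\prod$ of factorial ratios, or $r-\mathrm{rk}$ data. We must also check that every remaining term either has fewer factors $\chh_{\geqslant1}(\e_{1,2})$, or has the same number but is controlled by a secondary induction. For the secondary induction we would use a well-ordering, such as decreasing the sum of the indices on point-class descendents, or handle $R_m$'s contributions to other point-class factors of $D'$ by induction on the largest index. If $\chh_0(\e_{1,2})$ turns out to be a scalar, we would instead use $D''=D'$ and extract the point-class term from the $T_m$ part, namely from $\chh_0(1)\chh_k(\e_{1,2})$, where $\chh_0(1)$ realizes the rank $\rk(E)-r\neq0$.

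Once this reduction holds, Theorem \ref{Theorem Virasoro constraints} gives $\int\xi_{\K^\vee}(L_m(D''))=0$. Solving this relation for the integral of $D$ expresses it in terms of integrals with fewer $f$-type factors, and the induction terminates. Converting back via Newton's identities then yields the polynomial $\wt P(a,b)$.
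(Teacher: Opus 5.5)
Your reduction targets the wrong descendents, and the leading term you need does not come from any of the sources you propose. With the paper's realization $\xi_{\K^\vee}(\chh_l(\gamma))={p_Q}_*(\ch_{l+1-p}(\K^\vee)\,p_C^*\gamma)$, the point class $\eta=\e_{1,2}$ gives $\xi_{\K^\vee}(\chh_l(\eta))=S_{1,0,l}=\mu_l(a)/l!$, which is an $a$-type class. The unit gives $\xi_{\K^\vee}(\chh_l(1))=S_{1,2,l+1}$, and by \eqref{equation Sjkl to abf} the $S_{1,2,*}$ are the only classes in which $f$'s occur. So lowering the number of factors $\chh_{\geqslant 1}(\eta)$ would eliminate $a$-classes, not $f$-classes. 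Likewise $\chh_0(\eta)$ realizes the rank $\rk(E)-r$, while $\chh_0(1)$ realizes the degree $d-\deg(E)$, which may be $0$; your fallback assumes the opposite. The operators also differ from what you assume. Definition \ref{definition virasoro operators} gives $L_m=R_m+T_m$, with no $S_m$. We have $R_m(\chh_0(\gamma))=0$ because $\prod_{i=0}^m(i+0)=0$, so inserting $\chh_0(\eta)$ only rescales by $\rk(E)-r$. The quadratic part of $T_m$ is $(1-g)\sum_{i+j=m}i!j!\,\chh_i(\eta)\chh_j(\eta)$, with no $\chh_0(1)\chh_k(\eta)$ cross term. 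So the ``coefficient bookkeeping'' you defer is not a detail: every candidate source of your target term is zero.

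The missing idea is the \emph{linear} part of $T_m$, which your proposal never uses. It is $-m!\,\chh_m(\ch(E)\td(C))=-m!\big(\rk(E)\,\chh_m(1)+\chi(E)\,\chh_m(\eta)\big)$, realized as $-m!\big(\rk(E)\,S_{1,2,m+1}+\chi(E)\,S_{1,0,m}\big)$. Since $R_m$ sends $\chh_l(\gamma)$ to a multiple of $\chh_{l+m}(\gamma)$, it preserves the number of $S_{1,2,*}$ factors, and the quadratic part involves only $S_{1,0,*}$. Take a monomial $S_{1,2,n+1}M'$ in $S_{1,0,*},S_{1,2,*}$ and apply the constraint with $m=n$, $D=M'$. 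It reads
$$n!\,\rk(E)\int S_{1,2,n+1}M'=\int\Big(R_n(M')+(1-g)\sum_{i+j=n}i!j!\,S_{1,0,i}S_{1,0,j}M'-n!\,\chi(E)\,S_{1,0,n}M'\Big).$$
The right side has one fewer $S_{1,2,*}$ factor, and the coefficient $n!\,\rk(E)$ on the left is automatically nonzero. Induct on the number of $S_{1,2,*}$ factors (Proposition \ref{proposition intersection of S_12 classes from S_10 classes}, carrying the $S_{j,1,*}$ along as in Corollary \ref{corollary intersection Sjkl from S10 and Sj1}). Then use the triangular relations \eqref{equation Sjkl to abf}, which express $a,b$ through $S_{1,0,*},S_{j,1,*}$ alone, to obtain $\wt P(a,b)$. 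No secondary induction is needed.
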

    This result gives a method to intersect the $f$-classes,
    provided we know the $a$ and $b$-intersection numbers.
    However, it is not easy to find an elegant general formula to compute $\wt P$ for any $P$.
    We give a recursive formula which helps to compute $\wt P$.
    \begin{theorem*}[Theorem \ref{theorem intersection fl times P}]
    Let $\dim Q$ denote the expected dimension of the Quot scheme
    $\quot_{r,d}(E)$ and $1 < l < \rk(E)-r$.
    Let $G(a,b,f)$ be any polynomial 
    of weighted degree $2(\dim Q - l)$.
    Then 
    \begin{align*}
    \nonumber
        (-1)^{l} \cdot \rk(E) \int f_{l+1} \cdot G = &\int R_l(G) + 
        (1-g) \sum_{i+j=l} \int \mu_i(a) \mu_j(a) \, G 
        \ - \chi(E) \cdot \int \mu_l(a)\, G \\
        & - \frac{\rk(E)}{l+1} \cdot \int \left(
        \sum_{i=1}^{l} \frac{\p \mu_l(a)}{\p a_i} \, f_i 
        - \frac{1}{2} \sum_{i,k=1}^{l+1} \frac{\p^2 \mu_l(a)}{\p a_i \p a_k} \, \Gamma_{ik}
        \right) \cdot G \,.
    \end{align*}
    where $R_l$ is the derivation part of the Virasoro operator (Definition \ref{definition virasoro operators}),
    $\mu_l$'s are the polynomials appearing in Newton’s identities to express power sums in terms of elementary symmetric polynomials, with the notation $\mu_l(a)=\mu_l(a_1,\dots,a_l)$,
    and $\Gamma_{ik} = \sum_{j=1}^g (b_i^j b_k^{j+g} + b_k^j b_i^{j+g})$.
    The integrals are taken over the virtual fundamental class $[\quot_{r,d}(E)]^\vir$.
    \end{theorem*}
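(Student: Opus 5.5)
The recursion will be obtained by applying the Virasoro constraints (Theorem \ref{Theorem Virasoro constraints}) with $m=l$ to a descendent $D\in\D^C$ chosen so that $\xi_{\K^\vee}(D)=G$, and then rewriting every term of $\xi_{\K^\vee}(L_l(D))$ in terms of $a,b,f$-classes. First I will express $G$ through holomorphic descendents. Newton's identities write $\ch_k(\K^\vee)$ as $\frac{(-1)^{k-1}}{(k-1)!}\mu_k(c_1,\dots,c_k)$, up to the normalization implicit in the definition of $\mu_k$. Taking Künneth components then gives three descriptions. The class $\xi(\chh_{k}(1))$ is, up to a constant, the $f$-part of $\mu_{k+1}(c)$. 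The class $\xi(\chh_k(\e_{j,1}))$ is built from the $b$-classes. The class $\xi(\chh_k(\e_{1,2}))$ is, up to a constant, $\mu_k(a)$. Because $\xi_{\K^\vee}$ is an algebra homomorphism and these classes generate, $G=\xi(D)$ for some $D$.

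Next I split $L_l=R_l+T_l+S_l$ into its derivation part, its quadratic part $T_l$ (the $\sum \ch\ch$ term involving $\td(C)$, or equivalently the $(1-g)$ and $\e_{j,1}\e_{j+g,1}$ pairings), and the part $S_l$ that multiplies by $\chh_{l+1}(\mathrm{pt})$-type terms. In the formulation of \cite{BLM24} the operators are $L_k=R_k+T_k+(k+1)!R_{-1}\chh_{k+1}(\mathrm{pt})$, and the last term is where $\rk(E)$ enters. This is because the $R_{-1}$ constraint, $\int \xi(R_{-1}D)=0$, holds in the pair setting, with $R_{-1}$ acting on $\chh_0(\mathrm{pt})$, which realizes to the rank $\rk(\K^\vee)$ or, in the pair vertex algebra, to $\rk(E)$. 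I will compute $\xi$ of each piece. For the piece $R_{-1}$ applied to $\chh_{l+1}(\mathrm{pt})\cdot D$, the product rule gives $\chh_l(\mathrm{pt})D$ plus $\chh_{l+1}(\mathrm{pt})R_{-1}D$. The term $R_{-1}\chh_{l+1}(1)$ produces the $f_{l+1}$ term with coefficient $(-1)^l\rk(E)$, coming from the leading coefficient of $\mu_{l+1}$ in $c_{l+1}$. The remaining lower-order pieces of the $f$-part of $\mu_{l+1}(c)$ are handled next. Expanding $\mu_{l+1}(c)$ Künneth-wise, its $\e_{1,2}$ component is $\sum_i \frac{\p\mu}{\p a_i} f_i$ plus $\frac12\sum \frac{\p^2\mu}{\p a_i\p a_k}$ times the $b$-pairings, which produces $\Gamma_{ik}$ with the symplectic sign. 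This accounts for the final bracket with its $\frac{\rk(E)}{l+1}$ factor.

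I then identify $\xi(T_l)$. The $\chh_i(\mathrm{pt})\chh_j(\mathrm{pt})$ contribution weighted by $\chi(\O_C)=1-g$ gives $(1-g)\sum\mu_i(a)\mu_j(a)$. The $\chh_0$ correction involving $\ch_0(\K^\vee)\cdot\td$ and $\deg$ combines, via $\chi(E)$ and the constant terms $\xi(\chh_0(\mathrm{pt}))$ and $\xi(\chh_1(\mathrm{pt}))$ (the rank and degree data of $\K^\vee$), into the $-\chi(E)\mu_l(a)$ term. The cross terms with $\e_{j,1}$ are quadratic in $b$-classes. After rearrangement, these either cancel against the $\Gamma$ contributions or are absorbed into $R_l(G)$.

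The main obstacle is bookkeeping. It requires fixing normalizations (factorials in $\mu_k$ versus $\ch_k$, and the supercommutative signs for odd classes) so that the coefficients $(-1)^l\rk(E)$ and $\frac{\rk(E)}{l+1}$ come out exactly. It also requires checking that the hypothesis $1<l<\rk(E)-r$ guarantees that $c_{l+1}(\K^\vee)$ is a genuine, not identically vanishing, class. I would start by verifying the case $l=1$ by hand against the $a_1$, $f_2$ relation, and then do the general computation.
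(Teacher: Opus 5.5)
Your outer strategy is the paper's: apply Theorem \ref{Theorem Virasoro constraints} with $m=l$ to a descendent $D$ with $\xi_{\kv}(D)=G$, then rewrite via \eqref{equation Sjkl to abf}. The gap is in the term-by-term accounting, which is the whole content of this proof. It rests on the wrong operator.

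\textbf{The operator.} In this paper $L_l=R_l+T_l$ (Definition \ref{definition virasoro operators}). There is no piece $(l+1)!\,R_{-1}\chh_{l+1}(\mathrm{pt})$, and no $R_{-1}$-constraint is used. Such a constraint is in fact false here: for $r=N-1$ and $\dim Q=0$,
\[
\int\xi_{\kv}(R_{-1}\chh_1(\eta))=\int\xi_{\kv}(\chh_0(\eta))=N^g\neq 0.
\]

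\textbf{The steps that fail.}
\begin{itemize}
\item Even if you added that piece, it would contribute $(l+1)\mu_l(a)\,G+\mu_{l+1}(a)\,\xi_{\kv}(R_{-1}D)$. This contains neither $f_{l+1}$ nor $\rk(E)$.
\item $\xi_{\kv}(\chh_0(\eta))=\rk(\K^\vee)=\rk(E)-r$, not $\rk(E)$. So your step producing $(-1)^l\rk(E)f_{l+1}$ fails.
\item For the same reason, your derivation of $-\chi(E)\mu_l(a)$ from ``rank and degree data of $\K^\vee$'' fails.
\item $T_l$ has no odd cross terms to cancel, since the odd part of $\Delta_*\td(C)$ drops out by supercommutativity. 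The $\Gamma_{ik}$ terms must survive. They come only from the K\"unneth expansion of $\ch_{l+1}(\K^\vee)$; cancelling them would contradict the statement.
\end{itemize}

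\textbf{The correct bookkeeping} is short. We have $\ch(E)\td(C)=\rk(E)\cdot 1+\chi(E)\,\eta$. We also have $\xi_{\kv}(\chh_i(\eta))=S_{1,0,i}=\mu_i(a)/i!$ and $\xi_{\kv}(\chh_l(1))=S_{1,2,l+1}$. Here $\mu_0(a)$ is read as $0!\,S_{1,0,0}=\rk(E)-r$, which is the only place the rank of $\K^\vee$ enters. Hence
\[
\xi_{\kv}(T_l)=(1-g)\sum_{i+j=l}\mu_i(a)\mu_j(a)-\rk(E)\,l!\,S_{1,2,l+1}-\chi(E)\,\mu_l(a).
\]
By \eqref{equation Sjkl to abf} and $\p\mu_{l+1}/\p a_{l+1}=(-1)^l(l+1)$,
\[
l!\,S_{1,2,l+1}=(-1)^l f_{l+1}+\frac{1}{l+1}\Bigl(\sum_{i=1}^{l}\frac{\p\mu_{l+1}(a)}{\p a_i}f_i-\frac12\sum_{i,k=1}^{l+1}\frac{\p^2\mu_{l+1}(a)}{\p a_i\p a_k}\Gamma_{ik}\Bigr).
\]
So the $\rk(E)$ in both $(-1)^l\rk(E)\int f_{l+1}G$ and the factor $\frac{\rk(E)}{l+1}$ is the $\ch_0(E)$-coefficient in the linear part of $T_l$. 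Substituting into $\int R_l(G)+\int\xi_{\kv}(T_l)\,G=0$, which is \eqref{equation virasoro relations} with $D=G$, gives the formula immediately. This is the paper's proof.

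Three smaller points:
\begin{itemize}
\item The computation produces $\mu_{l+1}$, not $\mu_l$, in the last bracket. That is evidently what the statement intends; you can check with $l=1$ against the $\dim Q=1$ example.
\item Your normalization should be $\ch_k(\K^\vee)=\mu_k(c)/k!$.
\item The hypothesis on $l$ plays no role in the derivation beyond making $f_{l+1}$ a nontrivial class.
\end{itemize}
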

    Furthermore, in Example \ref{example integral in dim 2}, 
    we compute some $f$-intersections explicitly in terms of $a,b$-intersections when $\dim \quot_{r,d}(E) =2$.

\vspace{.3cm}
The organization of the article is as follows.
In Section \ref{Section required objects}, we define the objects needed to apply Joyce's theory and their relation with Quot schemes.
In Section \ref{section Vertex Algebra Structure and Invariants},
we use Joyce's theory to introduce the invariant $[\quot_{r,d}(E)]_\inv$
and prove a wall-crossing formula for the invariant.
In Section \ref{section computing invariant},
we compute the invariant $[\quot_{\rk(E)-1,d}(E)]_\inv$.
In Section \ref{section virasoro constraints}, we formulate and prove the Virasoro constraints for Quot schemes.
Section \ref{section intersection theory} is used to study virtual intersections on Quot schemes using Joyce's invariant and the Virasoro constraints.

\subsection*{Acknowledgements}
I thank Young-Hoon Kiem for suggesting the problem on Virasoro constraints,
for introducing me to \cite{BLM24} and for several helpful discussions.
I thank Woonam Lim and Chenjing Bu for useful discussions.
This work is supported by KIAS individual
Grant (ID MG104001) from Korea Institute of Advanced study.

\section{Required Objects}\label{Section required objects}
    We build the necessary setup to apply results from \cite{Joy21}.
    Let $C$ be a smooth projective curve over $\C$ and $E$ be a vector bundle on the curve, which will be fixed throughout the article.
    Let $E^\vee$ denote the dual of the bundle $E$.
    \begin{definition}\label{Definition category of pairs}
    We define an $E^\vee$-pair to be a triple $(F,V,\vp)$ 
    where $F$ is a coherent sheaf on $C$, $V$ is a finite-dimensional $\C$-vector space and $\vp: E^\vee \otimes V \to F$ is a morphism of sheaves.
    A morphism $(\psi,\theta) : (F_1,V_1,\vp_1) \to (F_2,V_2,\vp_2)$
    between two $E^\vee$-pairs consists of a morphism of sheaves $\psi: F_1 \to F_2$
    and a $\C$-linear map $\theta : V_1 \to V_2$ such that 
    $\vp_2 \circ (Id_{E^\vee} \otimes \theta)  = \psi \circ \vp_1$.
    Let $\Adash_{E^\vee}$ denote the category of all such $E^\vee$-pairs.
    \end{definition}
    The composition of morphisms can be defined in the obvious way.
    We call $(F',V',\vp')$ a sub-object  of $(F,V,\vp)$ if there exists a morphism $i = (i_F,i_V) : (F',V',\vp) \to (F,V,\vp)$ such that both the maps $i_F$ and $i_V$ are injective.
    Clearly, $\Adash_{E^\vee}$ is an abelian category.
    We will omit $E^\vee$ from the notation of $\Adash_{E^\vee}$ and write only $\Adash$ as $E$ is fixed.

    As in \cite[Definition 8.4]{Joy21},
    a (higher) moduli stack $\Mdash$ parametrizing objects of $D^b(\Adash)$ can be constructed using Toën–Vaquié derived moduli stack \cite{TV07}.
    It admits universal complexes 
    $\U$ over $C \times \Mdash$, $\V$ over $\Mdash$ and a universal pair on $C \times \Mdash$,
    $$ \Theta : E^\vee \boxtimes \V \to \U \,.$$
    One can form the projective linear (or rigidified) moduli stack 
    $\Mdash^\pl: = \Mdash/B\mb G_m$, 
    the quotient of $\Mdash$ by the action of the group stack $B\mb G_m$.
    There is a projection map 
    $$\Pidash^\pl : \Mdash \to \Mdash^\pl$$
    which is a $B\mb G_m$-bundle away from the point $(0,0,0) \in \Mdash^\pl$.

    Let $K(C)$ denote the numerical Grothendieck group of $\text{Coh}(C)$.
    An element $\alpha \in K(C)$ is a pair $(r,d)$ where $r$ denotes the rank and $d$ denotes the degree of a sheaf.
    Sometimes we will write $\alpha(F)$ to mean cohomological type of $F$, i.e. $\alpha(F) = (\rk(F),\deg(F))$.
    We write $\alpha>0$ if either $r>0$ or $r=0$ with $d>0$.
    Define $$K(\Adash) := \{(\alpha,e): \alpha \in K(C) \text{ and } e \in \Z\} \,.$$
    We define the type of an $E^\vee$-pair $(F,V,\vp)$ to be the tuple $(\alpha(F),\dim V) \in K(\Adash)$.
    We call $(\alpha,e)>0$ if either $e>0$ or $e=0$ with $\alpha>0$.
    Consider the positive cone 
    $$ C(\Adash) := \{ (\alpha,e) \in K(\Adash) : (\alpha,e)>0  \}\,.$$
    The moduli stacks $\Mdash$ and $\Mdash^\pl$ can be decomposed as
    $$\Mdash = \bigsqcup_{(\alpha,e) \in C(\Adash)} \Mdash_{\alpha,e} \quad \text{ and } \quad \Mdash^\pl = \bigsqcup_{(\alpha,e) \in C(\Adash)} \Mdash_{\alpha,e}^\pl$$
    where $\Mdash_{\alpha,e}$ and $\Mdash_{\alpha,e}^\pl$ are connected components containing pairs of the type $(\alpha,e)$.
    
    \vspace{.5cm}
    We define a family $\mathscr S:= \{\mu^\delta: \delta>0\}$ of stability conditions on $\Adash$, where $\mu^\delta$ is defined as follows,
    \begin{equation}\label{stability definition}
        \mu^\delta(F,V,\vp) = 
        \begin{cases}
            \frac{\deg(F)}{\rk(F)} + \frac{\delta \cdot \dim V}{\rk(F)} & \textnormal{ if } \rk(F)>0\,,\\
            0 & \textnormal{ if } \rk(F)=0\,.
        \end{cases} 
        \end{equation}
    Note that the stability condition depends only on the type of the object.
    For any $(\alpha,e) \in C(\Adash)$, we have the open substacks of $\mu^\delta$-stable and $\mu^\delta$-semistable pairs
    $$\Mdash_{\alpha,e}^{\delta-\st} \subset \Mdash_{\alpha,e}^{\delta-\ss}
    \subset \Mdash^\pl_{\alpha,e}$$
    inside the rigidified stack.
    For any $\delta>0$ and $e=1$, the substack $\Mdash_{\alpha,1}^{\delta-\st}$ is a fine moduli space, as shown in \cite{Lin18}.
    Let $q_C: C \times \Mdash_{\alpha,1}^{\delta-\st} \to C$ 
    and $q_{\Mdash}: C \times \Mdash_{\alpha,1}^{\delta-\st} \to \Mdash_{\alpha,1}^{\delta-\st} $ denote the projections.
    There is a universal pair
    $$ \Psi: q_C^*E^\vee \to \mF\,. $$
    on $C \times \Mdash_{\alpha,1}^{\delta-\st}$.
    The moduli space $\Mdash_{\alpha,1}^{\delta-\st}$ has a natural 2-term perfect obstruction theory given by
    \begin{equation}\label{equation POT on stable pairs}
        R{(q_{\Mdash})}_* R\HOM([q_C^*E^\vee \to \mF], \mF)\,.
    \end{equation}
    This produces a virtual fundamental class $$[\Mdash_{\alpha,1}^{\delta-\st}]^\vir \in H_\bullet(\Mdash_{\alpha,1}^{\delta-\st})$$
    due to Behrend-Fantechi \cite{BF97}.
    
    We study the extreme chambers of the stability conditions
    in the case $e =1$.
    Let us fix a cohomological type $\alpha \in K(C)$ with $0<\rk(\alpha)<\rk(E)$.
    For $\delta$ sufficiently large or sufficiently small, there are no strictly semistable pairs, 
    i.e., $\Mdash^{\delta-\ss}_{\alpha,1} = \Mdash^{\delta-\st}_{\alpha,1}$.
    In the right-most chamber, i.e., when $\delta \gg 0$,
    the moduli space $\Mdash^{\infty-ss}_{\alpha,1}$ is a Quot scheme with a different parameter,
    which we explain now.
    For ease of notation, let us denote the moduli space
    $\Mdash^{\infty-ss}_{\alpha,1}$ by $Q(E,\alpha)$, i.e.,
    $$Q(E,\alpha):= \Mdash^{\infty-ss}_{\alpha,1}\,.$$
    The following proposition gives a description for the closed points of $Q(E,\alpha)$.
    \begin{proposition}\label{proposition stability and gg}
    Let $\alpha = (r,d)$ with $0<r<\rk(E)$. 
        Let $\delta$ be sufficiently large, precisely, 
        $$\delta > \delta_{\infty} := \max_{0<s<r}
        \left\{ \frac{(r-s)(d-r \cdot \mu_{\min}(E^\vee))}{s}
        \right\}\,.
        $$
        Then an $E^\vee$-pair $(F,\C,\vp)$ of type $(\alpha,1)$ is $\delta$-semistable iff 
        $F$ is pure and
        the map $\vp$ is generically surjective.
    \end{proposition}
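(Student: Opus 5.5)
We verify the semistability condition directly, testing every subobject of $(F,\C,\vp)$ against the bound $\delta>\delta_\infty$. The pair has type $((r,d),1)$, so $\mu^\delta(F,\C,\vp)=(d+\delta)/r$. A subobject $(F',V',\vp')$ has $V'=0$ or $V'=\C$. If $V'=\C$, then $F'\supseteq \im\vp$.

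I use the usual convention for rank-zero objects, which I would make explicit at the start of the proof. A nonzero rank-zero object (a torsion sheaf, or $(0,\C,0)$) is treated as having maximal slope, so it destabilizes when it is a subobject and never destabilizes when it is a quotient. This is what makes purity appear in the statement. Write $\mu_{\min}=\mu_{\min}(E^\vee)$. The key input is that any nonzero quotient sheaf $G$ of $E^\vee$ satisfies $\deg G\geqslant \rk(G)\,\mu_{\min}$. The same bound holds for any torsion-free sheaf that contains such a quotient with finite colength.

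For ($\Leftarrow$), assume $F$ is locally free and $\vp$ is generically surjective, and consider a subobject $(F',0)$ with $\rk F'=s$ and $\deg F'=d'$.
\begin{itemize}
\item If $s=r$, then $F/F'$ is torsion, so $d'\leqslant d<d+\delta$ and the subobject does not destabilize.
\item If $0<s<r$, then the torsion-free part of $F/F'$ has rank $r-s$ and contains the image of $E^\vee$ generically. Hence $d-d'\geqslant (r-s)\mu_{\min}$. The condition $d'/s<(d+\delta)/r$ then reduces to $\delta>(r-s)(d-r\mu_{\min})/s$, which is guaranteed by $\delta>\delta_\infty$.
\end{itemize}
Now consider a subobject $(F',\C)$. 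Then $F'$ contains $\im\vp$, which has rank $r$. So the quotient $(F/F',0)$ is torsion and does not destabilize.

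For ($\Rightarrow$), I argue by contraposition.
\begin{itemize}
\item If $F$ has a nonzero torsion subsheaf $T$, then $(T,0)$ is a destabilizing rank-zero subobject.
\item If $\im\vp=0$, then $(0,\C,0)$ destabilizes.
\item Otherwise $F'=\im\vp$ has rank $s$ with $0<s<r$, and $d'=\deg F'\geqslant s\mu_{\min}$ since $F'$ is a quotient of $E^\vee$. The subobject $(F',\C,\vp)$ destabilizes provided $(d'+\delta)/s>(d+\delta)/r$. It suffices that $(s\mu_{\min}+\delta)/s>(d+\delta)/r$, which rearranges to $\delta> s(d-r\mu_{\min})/(r-s)$.
\end{itemize}
As $s$ runs over $1,\dots,r-1$, the substitution $s\mapsto r-s$ shows that $\max_s s/(r-s)=\max_s (r-s)/s$. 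This identifies the bound in the last case with $\delta_\infty$, so the single threshold $\delta_\infty$ controls both directions.

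The main obstacle is the convention for rank-zero objects. Read literally, definition \eqref{stability definition} assigns them slope $0$, and then torsion would not be destabilizing, so purity would fail to follow. I would first state the intended see-saw convention, under which rank-zero objects have slope $+\infty$, and check that the theorem is insensitive to it. The remaining work is routine slope bookkeeping with the $\mu_{\min}$ estimate.
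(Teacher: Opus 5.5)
Your proposal is correct. Its skeleton is the paper's. In $(\Leftarrow)$, a subobject of rank $<r$ must have $V'=0$. In $(\Rightarrow)$, you destabilize with $(\im\vp,\C,\vp)$, using $\mu(\im\vp)\geqslant\mu_{\min}(E^\vee)$ and the term of index $r-s$ in $\delta_\infty$; the paper does exactly this, writing $r'$ for your $s$.

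There are two real differences.
\begin{enumerate}
\item For a subsheaf $F'$ of rank $0<s<r$, the paper passes through the Harder--Narasimhan filtration of $F$. It bounds $\mu(F')\leqslant\mu(F_1)$ and then proves $\deg F_1\leqslant d-\mu_{\min}(E^\vee)\,\rk(F/F_1)$ by induction along the filtration. You instead observe that $E^\vee$ maps generically onto the torsion-free part of $F/F'$. This gives $d-d'\geqslant (r-s)\,\mu_{\min}(E^\vee)$ in one step, for each $F'$ with its own rank. It is the same estimate, obtained more directly.
\item The paper's proof never deals with rank-zero objects. It does not prove purity in $(\Rightarrow)$. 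It also asserts $0<r'$ without excluding $\vp=0$ or a torsion image of $\vp$. Your convention that nonzero rank-zero objects (torsion sheaves, $(0,\C,0)$) have maximal slope is exactly what handles these cases. On this point your argument is the more complete one.
\end{enumerate}

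One correction to your last paragraph. The statement is \emph{not} insensitive to that convention. With the literal value $0$ from \eqref{stability definition}, the see-saw property fails. For example, a torsion subsheaf $T\subset F$ gives slopes $0$, $(d+\delta)/r$, $(d-\ell(T)+\delta)/r$, which are not monotone once $d+\delta>0$. Moreover, a torsion subsheaf, or $(0,\C,0)$ when $\vp=0$, would then no longer destabilize against $(d+\delta)/r>0$. So both the purity clause and the exclusion of $\vp=0$ would fail for large $\delta$. The $+\infty$ convention therefore has to be imposed as part of the definition, not merely checked to be harmless. It is also the convention compatible with the weak see-saw property that Joyce's framework requires.
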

    \begin{proof}
        Let $(F,\C,\vp)$ be a pair such that $\vp$ is generically surjective.
        Let us consider a sub-object $(F',V',\vp')$ of $(F,\C,\vp)$.
        If $\rk(F') = \rk(F)$ then clearly 
        $\mu^\delta((F',V',\vp')) < \mu^\delta((F,\C,\vp))$.
        So we assume $\rk(F') < \rk(F)$. Then $V' = \C$ or $0$.
        As $\vp$ is generically surjective, it cannot factor through $F'$.
        It follows that $\vp' = 0$ and $V'=0$.
        So $\mu^\delta((F',V',\vp')) = \mu(F')$.
        As $\delta > \delta_{\infty}$,
        we have $$\mu^\delta((F,\C,\vp)) = \frac{d+\delta}{r} > \max_{0<s<r} \left\{ \frac{d- (r-s) \cdot \mu_{\min}(E^\vee))}{s}
        \right\}\,.$$
        To show the semistability of $(F,\C,\vp)$, all we need to prove is that $$\mu(F') \leqslant \max_{0<s<r} \left\{ \frac{d- (r-s) \cdot \mu_{\min}(E^\vee))}{s}
        \right\}\,.$$
        Consider the following Harder-Narasimhan filtration of $F$:
        $$ 0 = F_0 \subset F_1 \subset \cdots \subset F_{l-1} \subset F_l = F$$
        with $F_i/F_{i-1}$ being semistable of slope $\mu_i$.
        It is enough to show that 
        $$\mu(F_1) \leqslant \max_{0<s<r}\left\{ \frac{d-(r-s) \cdot \mu_{\min}(E^\vee))}{s}
        \right\}\,.$$
        As $F/F_{l-1}$ is semistable, 
        from the composition $E^\vee \to F \to F/F_{l-1}$, we get 
        $\mu_l \geqslant \mu_{\min}(E^\vee)$.
        Consequently $\mu_i > \mu_{\min}(F)$ for any $i$.
        Using the short exact sequence 
        $$ 0 \to F_i/F_{i-1} \to F/F_{i-1} \to F/F_i \to 0$$
        and using induction, we see that 
        $$ \deg(F_1) \leqslant \deg(F) -\mu_{\min}(E^\vee) \cdot \rk(F/F_1)\,.$$
        This gives the required inequality for $\mu(F_1)$.

        For the converse, let $(F,\C,\vp)$ be $\delta$-semistable.
        If possible, assume $\vp$ is not generically surjective.
        Let $F' \subset F$ denote the image subsheaf of $\vp$.
        Let $F'$ has rank $r'$ and degree $d'$.
        Then $0 < r' < r$.
        From the semistability of $(F,\C,\vp)$, we have
        \begin{equation}\label{eq1}
            \frac{d'+\delta}{r'} \leqslant \frac{d+\delta}{r} \leqslant \frac{d-d'}{r-r'}\,.
        \end{equation}
        From the quotient $E \to F'$, we have 
        $\mu_{\min}(E^\vee) \geqslant \mu(F')$.
        It follows that 
        \begin{equation}\label{eq2}
            d - r \cdot \mu_{\min}(E^\vee) \geqslant \frac{dr'-rd'}{r'} \,.
        \end{equation}
        Using $\delta> \delta_{\infty}$ and \eqref{eq2}, 
        we get
        \begin{equation*}
            \frac{d'+\delta}{r'} \geqslant \frac{d'}{r'} + 
            \frac{d-r \cdot \mu_{\min}(E^\vee)}{r-r'}
            = \frac{d-d'}{r-r'}\,.
        \end{equation*}
        which is a contradiction to \eqref{eq1}.
        We conclude that $\vp$ must be generically surjective.
    \end{proof}
    Thus $Q(E,\alpha)$ parametrises pairs of the form 
    $(\vp: E^\vee \to F)$ such that $F$ is a vector bundle on $C$
    of the type $\alpha$ and $\vp$ is generically surjective.
    Such a point $(\vp: E^\vee \to F)$ gives rise to a short exact sequence 
    $[0 \to F^\vee \xrightarrow{\vp^\vee} E \to \coker(\vp^\vee) \to 0]$,
    which is a point of the Quot scheme parametrizing quotients of $E$
    of rank $(\rk(E)-r)$ and degree $(d+\deg(E))$.
    Moreover, this correspondence is one-to-one.
    We show that this correspondence is actually an isomorphism of schemes.
    For integers $r,d$, we write $\quot_{r,d}(E)$ to mean the Quot scheme
    which parametrises quotients of $E$ of rank $r$ and degree $d$.
    \begin{proposition}\label{proposition isomorphism Quot and Q}
        Let r and $d$ be integers such that $0<r<\rk(E)$
        and let $\alpha = (\rk(E)-r,d-\deg(E))$.
        There exists an isomorphism
        $$\mf f: \quot_{r,d}(E) \to Q(E,\alpha)$$
        which is compatible with the virtual fundamental classes, i.e.,
        $$ \mf f_*([\quot_{r,d}(E)]^\vir) = [Q(E,\alpha)]^\vir\,.$$
    \end{proposition}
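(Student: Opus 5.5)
We construct $\mf f$ by identifying the functors of points of the two spaces, and then compare their perfect obstruction theories.

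\textbf{The map $\mf f$.} Let $T$ be a scheme and take a $T$-point of $\quot_{r,d}(E)$, i.e.\ a flat family of quotients $0 \to \K_T \to p_C^*E \to \Q_T \to 0$ on $C \times T$. Since $C$ is a curve, each fibre $\K_t$ is a subsheaf of a locally free sheaf, hence torsion free and so locally free. Together with $T$-flatness of $\Q_T$, this shows that $\K_T$ is locally free on $C\times T$ of rank $\rk(E)-r$. Dualizing the inclusion gives a family of pairs
\[
\vp_T := \iota^\vee : p_C^*E^\vee \to \K_T^\vee ,
\]
with $\K_T^\vee$ locally free and of type $\alpha=(\rk(E)-r,\, d-\deg(E))$ on every fibre. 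Each restriction $\vp_t$ is generically surjective because $\iota_t$ is injective as a map of bundles at the generic point. By Proposition \ref{proposition stability and gg}, this is a $T$-point of $Q(E,\alpha)$.

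\textbf{The inverse.} Conversely, let $\Psi_T: q_C^*E^\vee \to \mF_T$ be a family of $\delta$-stable pairs with $\delta>\delta_\infty$. Proposition \ref{proposition stability and gg} says $\mF_t$ is pure, hence locally free, for every $t$; a $T$-flat sheaf whose fibres are all locally free is itself locally free, so $\mF_T$ is locally free. Dualize to $\Psi_T^\vee : \mF_T^\vee \to p_C^*E$. On each fibre $\Psi_t^\vee$ is injective, being the dual of a generically surjective map. The fibrewise injectivity criterion for flatness then gives that $\Psi_T^\vee$ is injective with $T$-flat cokernel. The two constructions are mutually inverse and functorial in $T$; the $\C^*$-automorphisms of the $V=\C$ factor are already absorbed by the rigidification. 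So we obtain an isomorphism of moduli functors, and hence of schemes. This part is essentially formal.

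\textbf{Compatibility of virtual classes.} The obstruction theory on $\quot_{r,d}(E)$ used by \cite{MO07c} is $R{p_Q}_*R\HOM(\K,\Q)$. On $Q(E,\alpha)$ we use \eqref{equation POT on stable pairs}, namely $R{q}_* R\HOM([q_C^*E^\vee \to \mF],\mF)$. Pulling back along $\mf f$ sets $\mF=\K^\vee$. The complex $[E^\vee \to \K^\vee]$, with $E^\vee$ in degree $-1$, is quasi-isomorphic to $\EXT^1(\Q,\O)$ placed in degree $0$, since $\iota^\vee$ has cokernel $\EXT^1(\Q,\O)$ and is injective on fibres. Directly from the triangle, however, we have
\[
R\HOM([E^\vee\to\K^\vee],\K^\vee) \cong \mathrm{Cone}\big(\K\otimes\K^\vee \to \K\otimes E\big)[-1] \cong R\HOM(\K, \K\otimes E/\K)[\,\cdot\,].
\]
Applying $R\HOM(\K,-)$ to $0\to\K\to E\to\Q\to0$ identifies the cone of $R\HOM(\K,\K)\to R\HOM(\K,E)$ with $R\HOM(\K,\Q)$. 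Comparing shifts carefully, the deformation–obstruction complex $R{q}_*R\HOM([E^\vee\to\mF],\mF)$ becomes $R{p_Q}_*R\HOM(\K,\Q)$, so the two obstruction theories are isomorphic as complexes.

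\textbf{Main obstacle.} The hard part is upgrading this isomorphism of complexes to an isomorphism of obstruction theories, meaning compatibility with the maps to the cotangent complexes. I would argue this through naturality of Atiyah classes: the Atiyah class of the universal pair dualizes to the Atiyah class of the universal subsheaf $\K\subset E$, and the maps in both theories are built from these classes. Once the two obstruction theories agree under $\mf f$, Behrend–Fantechi \cite{BF97} gives $\mf f_*[\quot_{r,d}(E)]^\vir=[Q(E,\alpha)]^\vir$. If the Atiyah-class comparison becomes messy, a fallback is to note that both obstruction theories have the same K-theory class. That alone does not determine the virtual class, but here both theories arise from the same derived structure, since they are pulled back from the derived Quot scheme, which is identified with the derived moduli of pairs by the same dualization. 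This identification forces equality of the virtual classes.
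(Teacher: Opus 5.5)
Your proposal follows the paper's proof. As there, $\mf f$ and its inverse come from dualizing the universal subsheaf $\K\subset p_C^*E$ and the universal pair $\Psi$ and invoking the two universal properties, and the virtual classes are matched by identifying $R\HOM([E^\vee\to\K^\vee],\K^\vee)$ with $R\HOM(\K,\Q)$. The paper does this comparison in one line, so your explicit computation, and your point that compatibility with the maps to the cotangent complex also has to be checked, go beyond what the paper writes.

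There is one slip, though you never use it. The map $\iota^\vee\colon E^\vee\to\K^\vee$ is not injective: it is generically surjective with kernel $\Q^\vee=\HOM(\Q,\O)$ of rank $r>0$. So $[E^\vee\to\K^\vee]$ is not quasi-isomorphic to $\EXT^1(\Q,\O)$. What actually carries the argument is your triangle computation, once $\K\otimes E$ there is corrected to $\K^\vee\otimes E$.
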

    \begin{proof}
    Recall that $Q(E,\alpha)$ is a fine moduli space with a universal pair on $C \times Q(E,\alpha)$,
    $$ \Psi: q_C^*E^\vee \to \mF\,.$$
    Let
    $$0 \to \K \xrightarrow{\Phi} p_C^*E \to \Q \to 0$$
    denote the universal exact sequence on $C \times \quot_{r,d}(E)$. 
    Consider the dual map
    $$\Phi^\vee :  p_C^*E^\vee \to \K^\vee\,.$$
    For any closed point $[0 \to K \xrightarrow{\vp} E \to Q \to 0] \in \quot_{r,d}(E)$,
    the fiber of the map $\Phi^\vee$ over that point corresponds to the map
    $\vp^\vee: E^\vee \to K^\vee$,
    which is generically surjective and $K^\vee$ is locally free sheaf of type $\alpha$.
    As $Q(E,\alpha)$ is a fine moduli space, using Proposition \ref{proposition stability and gg}, we get a map
    $$\mf f: \quot_{r,d}(E) \xlongrightarrow{} Q(E,\alpha)\,$$
    such that 
    $$\mf f^*( q_C^*E^\vee \xlongrightarrow{\Psi} \mF) = 
    (p_C^*E^\vee \xlongrightarrow{\Phi^\vee} \K^\vee)\,.$$
    On the other hand, the dual morphism
    $$ \Psi^\vee: \mF^\vee \to q_C^*E$$
    is injective and the its cokernel is flat over $Q(E,\alpha)$.
    The universal property of the Quot scheme induces a map in the other direction, 
    $$\mf f' : Q(E,\alpha) \to \quot_{r,d}(E)$$
    such that 
    $$\mf f'^*(p_C^*E \to \Q) = (q_C^*E \to \coker(\Psi^\vee))\,.$$
    Clearly $\mf f'$ is an inverse of $\mf f$
    and hence $\mf f$ is an isomorphism.
    It is shown in \cite[Theorem 1]{MO07c} that the complex 
    $$ (R(p_Q)_*\HOM(\K,\Q))^\vee$$
    defines a  perfect obstruction theory on the Quot scheme,
    which gives rise to the virtual fundamental class
    $$[\quot_{r,d}(E)]^\vir \in H_\bullet(\quot_{r,d}(E))\,.$$
    Recall that the perfect obstruction theory on $Q(E,\alpha)$
    is given by $\eqref{equation POT on stable pairs}$.
    Comparing the perfect obstruction theories,
    it follows that 
    $$ \mf f_*([\quot_{r,d}(E)]^\vir)
    = [Q(E,\alpha)]^\vir\,.$$
    \end{proof}

    Now we consider the leftmost chamber, i.e., when $\delta$ is very small,
    we have the scheme $\Mdash^{0+\ss}_{\alpha,1}$ of $\mu^\delta$-stable pairs.
    For ease of notation, we denote this scheme by $P(E,\alpha)$, i.e.
    $$ P(E,\alpha) := \Mdash^{0+\ss}_{\alpha,1}\,.$$
    The following proposition gives a description for closed points in $P(E,\alpha)$.
    \begin{proposition}\label{proposition stability 0+ chamber}
        Let $\alpha = (r,d)$ with $0<r<\rk(E)$ and
        let $\delta < 1/r$.
        Then an $E^\vee$-pair $(F,\C,\vp)$ of type $(\alpha,1)$ is $\mu^\delta$-semistable iff
        $(F,\C,\vp)$ is $\mu^\delta$-stable iff
        $F$ is a slope-semistable vector bundle on $C$ and 
        $\vp$ is non-zero satisfying the following condition 
        \begin{equation}\label{condition small delta stability}
        \text{There is no proper subsheaf $G \subset F$ with $\mu(G)=\mu(F)$ such that $\im(\vp) \subset G$.}    
        \end{equation}
    \end{proposition}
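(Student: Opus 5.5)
The plan is to compare $\mu^\delta$-slopes of sub-objects directly. The key observation is that for $\delta<1/r$ the $\delta$-correction is too small to change any strict inequality between ordinary slopes. It can only break ties between sub-objects whose underlying sheaf has slope exactly $\mu(F)$.

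Fix a pair $(F,\C,\vp)$ of type $((r,d),1)$ and a sub-object $(F',V',\vp')$ with $\rk F'=s$, $\deg F'=d'$ and $e'=\dim V'\in\{0,1\}$. For $0<s\leqslant r$ we have
$$\mu^\delta(F',V',\vp')-\mu^\delta(F,\C,\vp)=\Big(\frac{d'}{s}-\frac{d}{r}\Big)+\frac{\delta(re'-s)}{rs}.$$

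\textbf{Step 1 (integrality gap).} The first bracket is a multiple of $1/(rs)$, so if it is nonzero its absolute value is at least $1/(rs)$. For $s<r$ we have $|re'-s|\leqslant r-1$, and since $\delta<1/r$ the second term has absolute value less than $1/(rs)$. Hence, when $s<r$, the sign of the difference equals the sign of $\mu(F')-\mu(F)$ whenever these slopes differ. When they are equal, the sign is that of $re'-s$: positive if $e'=1$ and negative if $e'=0$. In particular no sub-object of rank $s<r$ ever has the same $\mu^\delta$-slope as $(F,\C,\vp)$.

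\textbf{Step 2 (full-rank sub-objects).} Suppose $s=r$. If $e'=0$, the difference is $(d'-d-\delta)/r<0$, since $d'\leqslant d$. If $e'=1$, then $\vp'$ is the restriction of $\vp$ and $F'\supset\im\vp$. In this case the difference is $(d'-d)/r\leqslant 0$, with equality iff $F'=F$, i.e. iff the sub-object is not proper. So full-rank proper sub-objects never destabilize, nor do they give equality.

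\textbf{Step 3 (rank-zero sub-objects).} For rank-zero sub-objects I would use the convention implicit in the definition of the positive cone, which gives such objects maximal phase. Under it, a nonzero torsion subsheaf $T\subset F$, viewed as the sub-object $(T,0,0)$, destabilizes, and so does $(0,\C,0)$ when $\vp=0$. Hence semistability forces $F$ to be locally free and $\vp\neq 0$. Conversely, if $F$ is locally free and $\vp\neq0$, no nonzero rank-zero sub-object exists. Indeed, a rank-zero subsheaf of $F$ is zero, and $V'=\C$ would force $\vp$ to factor through $0$.

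\textbf{Step 4 (assembling).} By Step 1, semistability forces $\mu(F')\leqslant\mu(F)$ for all $F'\subset F$ of rank $s<r$, so $F$ is slope-semistable. It also excludes sub-objects with $\mu(F')=\mu(F)$ and $e'=1$. Such a sub-object exists precisely when $\im\vp\subset F'$ for a proper subsheaf $F'$ of slope $\mu(F)$. Any such $F'$ has rank $<r$: a full-rank proper subsheaf has strictly smaller degree, hence strictly smaller slope. So semistability forces condition \eqref{condition small delta stability}.

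Conversely, assume $F$ is a semistable bundle, $\vp\neq0$ and \eqref{condition small delta stability} holds. Steps 1–3 then show that every proper nonzero sub-object has strictly smaller $\mu^\delta$-slope, so the pair is $\mu^\delta$-stable. Stable implies semistable trivially, which closes the cycle of equivalences.

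The main obstacle I expect is Step 3. The paper's formula \eqref{stability definition} literally assigns slope $0$ to rank-zero objects, so the torsion and $\vp=0$ cases must be handled via the intended ordering convention (rank-zero objects treated as having slope $+\infty$), and I would state this convention explicitly. The inequality bookkeeping in Step 1 is routine once the bound $|re'-s|\leqslant r-1$ is noted.
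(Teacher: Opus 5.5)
Your argument is correct. The stable direction, and the derivation of slope-semistability of $F$ and of the condition on $\im(\vp)$, are the same direct slope comparison the paper uses. Your Step 1 bound merges the paper's two cases into one estimate: the case $\vp'=0$, where semistability of $F$ alone suffices, and the case $\vp'\neq 0$, where the integer $dr'-d'r\geqslant 1$ beats $(r-r')\delta<1$. That bound tacitly uses $\delta>0$ from the family $\mathscr S$, which you should say.

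The real difference is how you force $\vp\neq 0$. The paper picks an auxiliary semistable bundle $G$ of rank $r$ with $\mu(G)>\mu(F)$ and a nonzero $\rho\colon E^\vee\to G$. It takes for granted that $(G,\C,\rho)$ is stable, and gets a contradiction from the nonzero morphism $(0,\id)\colon (G,\C,\rho)\to(F,\C,0)$. You simply observe that $(0,\C,0)$ destabilizes $(F,\C,0)$.

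Your route is shorter and needs no auxiliary $G$. It also makes explicit what the paper leaves hidden: the image of the paper's morphism is exactly $(0,\C,0)$, so its argument also rests on rank-zero objects having maximal slope. With the literal value $0$ in the definition of $\mu^\delta$, $(0,\C,0)$ does not destabilize $(F,\C,0)$ once $d>0$, so your explicit convention is genuinely needed. Likewise, your Step 3 supplies the torsion-freeness of $F$, which the paper leaves implicit in writing $\mu(F')=\mu^\delta((F',0,0))$.
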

    \begin{proof}
        Let $(F,\C,\vp)$ satisfies condition \eqref{condition small delta stability}, $F$ is slope-semistable and $\vp$ is nonzero.
        Consider any sub-object $(F',V',\vp') \subset (F,\C,\vp)$.
        If $\vp' = 0$ then $V' = 0$. 
        Using semistability of $F$, we have
        $$\mu^\delta((F',V',\vp')) = \mu(F')\leqslant \mu(F) < \mu^\delta((F,\C,\vp)) \,.$$
        Assume $\vp'\neq 0$. Then
        $$\mu^\delta((F,\C,\vp)) -\mu^\delta((F',V',\vp')) = 
        \frac{1}{rr'} \left(dr'-d'r + (r'-r)\delta \right)\,.$$
        By the semistability of $F$ and the condition \eqref{condition small delta stability},
        the quantity $(dr'-d'r) > 0$.
        Choosing $\delta < 1/r$, we clearly have
        $\mu^\delta((F,\C,\vp)) -\mu^\delta((F',V',\vp')) > 0$.
        Hence $(F,\C,\vp)$ is $\delta$-stable.

        For the converse, let $(F,\C,\vp)$ be $\delta$-semistable.
        Condition \eqref{condition small delta stability} is clear.
        First, we prove $F$ is slope-semistable.
        Let $F' \subset F$ be a subsheaf.
        Considering $(F',0,0)$ as a sub-object of $(F,\C,\vp)$,
        we get
        $$\mu(F') = \mu^\delta((F',0,0)) \leqslant \mu^\delta((F,\C,\vp)) = \mu(F) + \delta/r$$
        As $\delta<1/r$, we conclude $\mu(F') \leqslant \mu(F)$.
        Next, we show that $\vp \neq 0$.
        If possible, say $\vp =0$.
        Let $\rho : E^\vee \to G$ be a non-zero map
        such that $G$ is a semistable bundle of rank $r$ and slope
        $\mu(G) > \mu(F)$.
        Using the other direction of the proposition (which is already proved), 
        we see that the pair $(G,\C,\rho)$ is $\delta$-stable.
        Then we have the morphism $(0,\id): (G,\C,\rho) \to (F,\C,0)$ of $\delta$-stable objects.
        It follows that $\mu^\delta((G,\C,\rho))<\mu^\delta((F,\C,0))$,
        which is a contradiction.
        Hence $\vp \neq 0$. This completes the proof.
    \end{proof}
    Thus the moduli space $P(E,\alpha)$ parametrizes pairs of the form $\vp: E^\vee \to F$
    such that $F$ is a slope-semistable vector bundle on $C$ of the type $\alpha$ and 
    $\vp$ is non-zero satisfying condition \eqref{condition small delta stability}.
    This moduli space $P(E,\alpha)$ is used in \cite{BDW} in a different setting.
    When $\alpha=(r,d)$ with $r$ and $d$ being coprime,
    $P(E,\alpha)$ has a simpler description.
    Let $M_\alpha^s$ denote the moduli space of stable bundles of type $\alpha$ on $C$.
    Let $p: C \times M^s_\alpha \to C$ and 
    $q: C \times M^s_\alpha \to M^s_\alpha$ denote the projections.
    As $r$ and $d$ are coprime, the moduli space $M_\alpha^s$ 
    admits a universal sheaf, say $\G$, on $C \times M^s_\alpha$.
    It is easy to see that $P(E,\alpha)$ 
    is the virtual projective bundle $\mb P(Rq_*(p^*E \otimes \G))$ over $M^s_\alpha$.
    Moreover, when $d > r(2g-2-\mu_{\min}(E))$,
    we have the vanishing
    $H^1(E \otimes G)=0$ for any $G \in M^s_{\alpha}$.
    In this case $P(E,\alpha)$ is the smooth projective bundle
    $\mb P(q_*(p^*E\otimes \G))$ over $M^s_\alpha$.

    \section{Vertex Algebra Structure and Invariants}\label{section Vertex Algebra Structure and Invariants}
    We will define an enumerative invariant for the Quot scheme, 
    which takes place in a vertex algebra whose underlying vector 
    space is the homology of the stack $\Mdash$.
    We will follow the definition of homology of stacks 
    as in \cite[Definition 2.2]{Joy21}.
    It is defined as follows.
    To each higher stack $\mc X$, in \cite{Bla16}, 
    Blanc constructs a topological realization $|\mc X|$ which is a topological space. 
    The homology and cohomology of $\mc X$ are defined as the homology and cohomology of $|\mc X|$,
    $$ H^\bullet(\mc X):= H^\bullet(|\mc X|)\, \quad \quad 
    H_\bullet(\mc X) := H_\bullet(|\mc X|)\,.$$
    Let $\perf$ denote the moduli stack of perfect complexes on $\spec\,\C$, as in To\"en–Vaqui\'e \cite[Definition 3.28]{TV07}.
    Each perfect complex $\mc E$ on $\mc X$ corresponds to a map $\mc E: \mc X \to \perf$,
    which induces $|\mc E|: |\mc X| \to BU \times \mb Z$ and a K-theory class 
    $$ [[\mc E]] := |\mc E|^* (\U') \in K^0(|\mc X|)\,.$$
    The Chern classes of $\mc E$ are defined to be the Chern classes of $[[\mc E]]$.

    One crucial ingredient in the construction of the vertex algebra from $H_\bullet(\Mdash)$ is the complex $\acute{\mc E}$ on $\Mdash \times \Mdash$.
    We describe the construction of $\acute{\mc E}$ briefly.
    Recall the universal pair 
    $$\Theta: E^\vee \boxtimes \V \to \U$$ on $C \times \Mdash$.
    Let $\pi_{ij}$ denote the projection from 
    $C \times \Mdash \times \Mdash$ to the $i$-th and $j$-th factors
    and let $q_2,q_3$ denote the projections from $\Mdash \times \Mdash$ to its factors. 
    Following \cite[Definition 8.14]{Joy21}, we define 
    the $\acute{\EXT}$ complex on $\Mdash \times \Mdash$ 
    using the following triangle
    
    \begin{align}\label{equation definition of ext complex}
    \acute{\EXT} \longrightarrow 
    \substack{
    \displaystyle (\pi_{23})_*(\pi_{12}^*\U^\vee \otimes \pi_{13}^*\U)\\
    \displaystyle 
    \oplus \\ 
    \displaystyle (q_2^*\V^\vee \otimes q_3^*\V)}
    \xlongrightarrow{\wt \Theta_1 \oplus \wt \Theta_2}
    (\pi_{23})_*(\pi_{12}^*(E^\vee \boxtimes \V)^\vee \otimes \pi_{13}^*\U)
    \longrightarrow \acute\EXT[1]
    \end{align}
    \\
    
    \noindent{}
    where $\wt \Theta_1 := (\pi_{23})_*(\pi_{12}^*\Theta^\vee \otimes \pi_{13}^*\id)$
    and $\wt \Theta_2$ is defined using the following composition
    $$\pi_{23}^*(q_2^*\V^\vee \otimes q_3^*\V)
    \xlongrightarrow{\id \otimes (\pi_{13}^*\Theta)} 
    \pi_{23}^*q_2^*\V^\vee \otimes \pi_{13}^*(E \boxtimes \U)
    \cong \pi_{12}^*(E^\vee \boxtimes \V)^\vee \otimes \pi_{13}^*\U\,.$$
    We define the perfect complex $\acute{\mc E}$ to be the dual of the complex $\acute{\EXT}$, i.e.,
    $$\acute{\mc E} := \acute{\EXT}^\vee\,.$$
    For $(\alpha,e), (\alpha',e') \in K(\Adash)$,
    let $\acute{\mc E}_{(\alpha,e), (\alpha',e')}$ denote the restriction of $\acute{\mc E}$ to the component 
    $\Mdash_{\alpha,e} \times \Mdash_{\alpha',e'}$.
    We define the number
    $$\acute{\chi}((\alpha,e), (\alpha',e')) := \rk(\acute{\mc E}_{(\alpha,e), (\alpha',e')})\,.$$
    For $(\alpha,e) \in K(\Adash)$, define the shift
    $$\wh{H}_\bullet(\Mdash_{\alpha,e}) = H_{\bullet-2\acute{\chi}((\alpha,e),(\alpha,e))}(\Mdash_{\alpha,e})$$
    and
    $$\wh{H}_\bullet(\Mdash^\pl_{\alpha,e}) = H_{\bullet+2-2\acute{\chi}((\alpha,e),(\alpha,e))}(\Mdash^\pl_{\alpha,e})\,.$$
    Define the shifted homology 
    $$ \acute{V}_\bullet = \bigoplus_{(\alpha,e) \in K(\Adash)} \wh{H}_\bullet(\Mdash_{\alpha,e})\,.$$
    A graded vertex algebra structure is constructed on the shifted homology $\acute{V}_{\bullet}$ as follows.
    \begin{theorem}{\cite[Theorem 3.12]{Joy21}}\label{theorem vertex algebra structure}
    There is a vertex algebra structure on $\acute{V}_\bullet$ defined by
    \begin{enumerate}
        \item The unit element
        $1 \in \wh{H}_0(\Mdash)$ is given by the class
        $$ [0] \in H_0(\Mdash_{0,0})\, ,$$
        \item The translation operator $\acute{T} : \wh H_\bullet(\Mdash_{\alpha,e}) \longrightarrow \wh H_{\bullet+2}(\Mdash_{\alpha,e})$ is defined as follows.
        For $u \in \wh H_\bullet(\Mdash_{\alpha,e})$, we set
        $$\acute{T}(u) = \vartheta_*(t \boxtimes u)\, ,$$
        where $\vartheta: B\mb G_m \times \Mdash \to \Mdash$
        is the $B\mb G_m$-action on $\Mdash$ and
        $t \in H_2(B\mb G_m)$ is the dual of the first Chern class of the universal line bundle on $B\mb G_m$.
        \item The state-field correspondence is given by the following formula. 
        For $u \in \wh{H}_\bullet(\Mdash_{\alpha,e})$ and 
        $v \in \wh{H}_\bullet(\Mdash_{\alpha',e'})$, we set
        $$\qquad \acute Y(u,z)v = (-1)^{\acute{\chi}((\alpha,e), (\alpha',e'))} \cdot z^{\acute{\chi}_{\sym}((\alpha,e),(\alpha',e'))} \cdot 
        \oplus_* \left((e^{z\acute{T}} \otimes \id) ( c_{1/z}( \acute{\mc E}^{\sym}) \cap u \boxtimes v) \right) $$
        where $\acute{\chi}_{\sym}((\alpha,e),(\alpha',e')) = {\acute{\chi}((\alpha,e),(\alpha',e'))} + {\acute{\chi}((\alpha',e'),(\alpha,e))}$
        and $$\acute{\mc E}^{\sym} = \acute{\mc E} \oplus \sigma^*(\acute{\mc E}^\vee)\, ,$$ $\sigma : \Mdash \times \Mdash \to \Mdash \times \Mdash$ swaps the two factors.
    \end{enumerate}
    \end{theorem}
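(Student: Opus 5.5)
This is an instance of Joyce's general theorem \cite[Theorem 3.12]{Joy19}: a graded vertex algebra structure exists on $H_\bullet$ of any higher stack $\mc X$ satisfying a short list of axioms. So the plan is not to verify the vertex algebra axioms by hand. Instead I will check that $\Mdash$, with the data constructed above, meets Joyce's hypotheses, following the pattern of \cite[Chapter 8]{Joy21} for pair categories. The data needed are four.
\begin{enumerate}
\item A commutative monoid structure $\oplus: \Mdash \times \Mdash \to \Mdash$, given by direct sum of $E^\vee$-pairs $(F_1\oplus F_2, V_1\oplus V_2, \vp_1\oplus\vp_2)$, with unit the zero pair $(0,0,0)$.
\item A $B\mb G_m$-action $\vartheta$, given by scaling automorphisms, compatible with $\oplus$.
\item A perfect complex $\acute{\mc E}$ on $\Mdash\times\Mdash$.
\item A grading by $K(\Adash)$, additive under $\oplus$.
\end{enumerate}
The monoid structure and the action both exist at the level of Toën–Vaquié stacks because $D^b(\Adash)$ is a dg-enhanced triangulated category and these are functorial constructions on families of objects. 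Compatibility of $\oplus$ and $\vartheta$ with the type decomposition is immediate.

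The substantive hypotheses concern $\acute{\mc E}$. Joyce requires, first, that $\acute{\mc E}$ be biadditive under $\oplus$:
$$(\oplus\times\id)^*\acute{\mc E}\cong \pi_{13}^*\acute{\mc E}\oplus\pi_{23}^*\acute{\mc E},$$
and similarly in the second variable. Second, the $B\mb G_m\times B\mb G_m$-action must act on $\acute{\mc E}$ with weights $(-1,1)$. Third, the rank of $\acute{\mc E}_{(\alpha,e),(\alpha',e')}$ must be the bilinear form $\acute\chi$.

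I would verify these directly from the defining triangle \eqref{equation definition of ext complex}. Each of the three terms is built from $\pi_{12}^*\U^\vee\otimes\pi_{13}^*\U$, from $q_2^*\V^\vee\otimes q_3^*\V$, and from $(E^\vee\boxtimes\V)^\vee\otimes\U$. Each of these is additive in each variable, since the universal objects $\U,\V,\Theta$ pull back under $\oplus$ to direct sums. Each also carries weight $-1$ in the first factor, through the dual, and weight $+1$ in the second. The maps $\wt\Theta_1,\wt\Theta_2$ respect both decompositions.

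The main obstacle I expect is that a cone is only defined up to non-unique isomorphism, so additivity and equivariance of the cone $\acute{\EXT}$ do not follow formally from those of the terms. To handle this I will work with the dg-enhancement and define $\acute{\EXT}$ as a functorial cone, namely a mapping cone of explicit complexes, on the derived stack. Then the isomorphisms are canonical and their coherences are inherited, exactly as in \cite[Definition 8.14]{Joy21}.

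Once the hypotheses are verified, Joyce's theorem supplies the unit $[0]$ and the translation operator $\acute T(u)=\vartheta_*(t\boxtimes u)$. It also supplies the field $\acute Y(u,z)v$ in the stated form: the sign $(-1)^{\acute\chi}$ and the power $z^{\acute\chi_{\sym}}$ come from the symmetrized complex $\acute{\mc E}^{\sym}$. The grading convention is the shift by $2\acute\chi((\alpha,e),(\alpha,e))$ used in defining $\acute V_\bullet$. This completes the argument.
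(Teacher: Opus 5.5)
Your proposal is correct and is essentially what the paper does: the statement is taken directly from \cite[Theorem 3.12]{Joy21}, i.e.\ Joyce's general construction applied to $\Mdash$ with direct sum, the $B\mb G_m$-action and the complex $\acute{\mc E}=\acute{\EXT}^\vee$ of \cite[Definition 8.14]{Joy21} (with the line bundle $L$ replaced by $E^\vee$), and the paper gives no argument beyond this citation. Two minor remarks: the weights $(-1,1)$ you compute belong to $\acute{\EXT}$, so $\acute{\mc E}$ itself has weights $(1,-1)$; and the cone issue you flag is not a real obstacle, since $\acute Y$ only involves the rank and Chern classes of $\acute{\mc E}$, so biadditivity and equivariance are needed only for its topological $K$-theory class, where they follow immediately from the triangle \eqref{equation definition of ext complex}.
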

    It is shown in \cite[Theorem 4.8]{Joy21} that, 
    for $(\alpha,e) > 0$,
    the projection $\Mdash_{\alpha,e} \to \Mdash^\pl_{\alpha,e}$ induces an isomorphism 
    $$ \wh{H}_\bullet(\Mdash^\pl_{\alpha,e}) \cong \wh{H}_\bullet(\Mdash_{\alpha,e})/\im \acute{T}\,.$$
    Due to \cite{Bor86}, the vertex algebra on $\acute{V}_\bullet$ induces a graded Lie algebra structure on $\acute{V}_\bullet/ \im \acute{T}$, which is given by
    \begin{equation}\label{equation definition of bracket}
    [\bar{u},\bar{v}] = \Res_{z=0}(\acute Y(u,z)v) + \im \acute{T} \,,
    \end{equation}
    where $\bar u$ and $\bar v$ are images of $u, v \in \acute{V}_\bullet$.
    This defines a graded Lie algebra structure on the shifted homology
    $$ \wh{H}_\bullet(\Mdash^\pl_{>0}) =  \bigoplus_{(\alpha,e) >0} \wh{H}_\bullet(\Mdash^\pl_{\alpha,e})\,.$$
    Note that the subspace $\wh{H}_0(\Mdash^\pl_{>0}) = \bigoplus_{(\alpha,e)>0} \wh{H}_0(\Mdash^\pl_{\alpha,e})$ 
    is an ordinary Lie algebra.

    Let $\M$ denote the (higher) stack parametrizing objects 
    of the bounded derived category $D^b(\text{Coh}(C))$ of coherent sheaves on $C$. 
    For any $\alpha \in K(C)$, we have the open substacks of 
    slope-stable and slope-semistable sheaves of type $\alpha$,
    $$\M^{\st}_\alpha \subset \M^{\ss}_\alpha
    \subset \M^\pl_{\alpha}:= \M_\alpha/B\mb G_m\,.$$
    We can view the stacks $\M_{\alpha}, \M^\pl_{\alpha}$ and $\M^{\ss}_\alpha$
    as $\Mdash_{\alpha,0}, \Mdash^\pl_{\alpha,0}$ 
    and $\Mdash^{\delta-\ss}_{\alpha,0}$ (for any $\delta >0$)
    respectively.
    We can define the shifted homologies $\wh{H}_\bullet(\M_{\alpha})$
    and $\wh{H}_\bullet(\M^\pl_{\alpha})$ in the same way as we have done for $\Mdash$,
    and we can
    construct a similar vertex algebra structure on $V_\bullet$,
    where
    $$V_\bullet = \bigoplus_{\alpha \in K(C)} \wh{H}_\bullet(\M_{\alpha})\,.$$
    For $\alpha \in K(C)$ with $\alpha>0$, Joyce defined an enumerative invariant  in \cite[Theorem 7.63]{Joy21},
    $$ \sfM_\alpha := [\M^{\ss}_\alpha]_\inv \in \wh{H}_\bullet(\M^\pl_{\alpha})\,.$$
    When $\M^{\st}_\alpha = \M^{\ss}_\alpha$, this class coincides 
    with the fundamental class $[\M^{\ss}_\alpha]_{\fund}$
    of the smooth projective scheme $\M^{\ss}_\alpha$.
    In \cite[Theorem 8.24]{Joy21}, Joyce defined a similar invariant
    in the case of $\Mdash$, when $E$ is a line bundle.
    Generalizing this result for any vector bundle $E$,
    we have the following theorem.
    Recall that $\Mdash_{\alpha,1}^{\delta-\st}$ admits a virtual fundamental class $[\Mdash_{\alpha,1}^{\delta-\st}]^\vir$.
    Let the pushforward of this class along the inclusion $\Mdash_{\alpha,1}^{\delta-\st} \hookrightarrow \Mdash_{\alpha,1}^{\pl}$
    be also denoted by $[\Mdash_{\alpha,1}^{\delta-\st}]^\vir \in \wh H_\bullet(\Mdash_{\alpha,1}^{\pl})$.
    In Section \ref{section computing invariant}, we will compute (in \eqref{equation formula for chi})
    that if $\alpha=(r,d)$ and $\alpha'=(r',d')$ then
    \begin{equation*}
        \acute{\chi}((\alpha,e), (\alpha',e')) 
        = (1-g)rr'-\chi(E)\,r'e -\rk(E)\, d'e+ee'+rd'-r'd\,.
    \end{equation*}
    In particular,
    \begin{equation*}
        2-2\acute{\chi}((\alpha,0),(\alpha,0)) =2((g-1)r^2+1) = 2 \dim(M^s_\alpha)\,.
    \end{equation*}
    and
    \begin{equation*}
        2-2\acute{\chi}((\alpha,1),(\alpha,1)) =2(\rk(E)d+\chi(E)r-(1-g)r^2) = 2 \dim(\Mdash_{\alpha,1}^{\delta-\st})\,.
    \end{equation*}
    Hence, the ordinary Lie algebra 
    $\wh{H}_0(\M^\pl_{\alpha})$ is the correct degree component for the invariant $[\M^{\ss}_\alpha]_\inv$ to take place.
    Similarly, the invariant $[\Mdash_{\alpha,1}^{\delta-\st}]^\vir$ 
    lies in the Lie algebra $\wh{H}_0(\Mdash^\pl_{\alpha,1})$.
    
    \begin{theorem}\label{theorem wallcrossing formula}
        With the descriptions given above, we have the following.
        For any $\alpha\in K(C)$, $e\in \{0,1\}$ and any $\delta>0$,
        there is a unique class 
        $\sfMdash^\delta_{\alpha,e}:=[\Mdash_{\alpha,e}^{\delta-\ss}]_\inv$ in the  shifted homology group
        $\wh H_0(\Mdash_{\alpha,e}^{\pl})$.
        This class satisfies the following
        \begin{enumerate}
            \item If $\Mdash_{\alpha,e}^{\delta-\ss} = \Mdash_{\alpha,e}^{\delta-\st}$
            then $\sfMdash^\delta_{\alpha,e} = [\Mdash_{\alpha,e}^{\delta-\st}]^\vir$,
            the virtual fundamental class of the projective scheme $\Mdash_{\alpha,e}^{\delta-\st}$.
        
            \item For $\delta, \delta'>0$, we have the following wall-crossing formula
            \begin{align}\label{equation wall-crossing theorem}
            \nonumber
                \sfMdash^{\delta'}_{\alpha,1} = \sum_{\substack{n \geqslant 1, \rk(\alpha_i)>0\\ 
                \sum \alpha_i = \alpha, \sum e_i=1 \\
                 e_i \geqslant 0, \Mdash^{\delta-\ss}_{(\alpha_i,e_i)} \neq \varnothing
                }} \tilde U((\alpha_1,e_1),& \dots,(\alpha_n,e_n); \mu^{\delta'},\mu^{\delta}) \cdot \\
                & \left[ \Big[ \cdots \Big[ \Big[\sfMdash^\delta_{\alpha_1,e_1}, \sfMdash^\delta_{\alpha_2,e_2} \Big], 
                \sfMdash^\delta_{\alpha_3,e_3} \Big],
                \dots \Big],\sfMdash^\delta_{\alpha_n,e_n}\right]
            \end{align}
            in the Lie algebra $\wh H_{0}(\Mdash^{\pl})$.
            Here $\tilde U((\alpha_1,e_1),\dots (\alpha_n,e_n);\mu^\delta,\mu^{\delta'})$ 
            are combinatorial coefficients which are defined 
            in \cite[Theorem 3.12]{Joy21} and the sum is finite.
        \end{enumerate}
    \end{theorem}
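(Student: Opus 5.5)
We will deduce Theorem \ref{theorem wallcrossing formula} from Joyce's general machinery in \cite{Joy21}, in the same way \cite[Theorem 8.24]{Joy21} treats the case where $E$ is a line bundle. The plan is to verify that the abelian category $\Adash$, the stack $\Mdash$, the family $\mathscr S=\{\mu^\delta\}$ and the obstruction theories satisfy Joyce's Assumptions 5.1--5.3 (and the pair-type Assumptions of \cite[\S 8]{Joy21}) for \emph{all} $(\alpha,e)$ with $e\in\{0,1\}$. Joyce's abstract existence and wall-crossing theorem \cite[Theorem 5.7]{Joy21} then yields both the unique classes $\sfMdash^\delta_{\alpha,e}$ and formula \eqref{equation wall-crossing theorem}.

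The verification proceeds in the following order.

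\emph{Structural properties of the stack.} $\Adash$ is abelian, and $\Mdash$ is the Toën–Vaquié stack with universal data $\Theta$. The complex $\acute{\mc E}$ from \eqref{equation definition of ext complex} computes $\mathrm{Ext}$-groups in $\Adash$. Indeed, its fibre at a pair of points is the cone of $\mathrm{Hom}(F_1,F_2)\oplus\mathrm{Hom}(V_1,V_2)\to\mathrm{Hom}(E^\vee\otimes V_1,F_2)$, and this cone is the standard resolution of $\mathrm{RHom}$ in the category of pairs. Because $\dim C=1$, $\mathrm{Ext}^{\geqslant 3}$ vanishes and $\acute\chi$ is additive. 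These facts give the vertex algebra of Theorem \ref{theorem vertex algebra structure} and the Lie bracket.

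\emph{Stability properties.} We must check that each $\mu^\delta$ is a weak stability condition depending only on the type. Semistable loci $\Mdash^{\delta-\ss}_{\alpha,e}$ should be finite type and open. Only finitely many walls in $\delta$ may affect a fixed $(\alpha,1)$, which makes the sum finite. Boundedness reduces to the usual boundedness of semistable bundles, together with the estimate on Harder–Narasimhan slopes used in Proposition \ref{proposition stability and gg}. For $e=0$ the loci are $\M^{\ss}_\alpha$, and the invariants are Joyce's $\sfM_\alpha$; this is the sheaf case of \cite[Theorem 7.63]{Joy21}.

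\emph{Obstruction theories.} When $e=1$ and $\mathrm{st}=\mathrm{ss}$, the moduli space is a fine projective scheme by \cite{Lin18}. The perfect obstruction theory \eqref{equation POT on stable pairs} must agree with the restriction of the $\acute{\EXT}$ complex to the diagonal, shifted by one, up to the trivial $\mathrm{Hom}(V,V)$ summand. Projectivity follows from boundedness and GIT, as in the $\delta$-pair construction of \cite{Lin18}.

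\emph{Auxiliary framed stacks.} The key input to Joyce's theorem is the construction of auxiliary pair/framed stacks. They are needed to define $\sfMdash^\delta_{\alpha,1}$ when strictly semistables exist and to prove the wall-crossing in $\wh H_0(\Mdash^\pl)$. For these we imitate \cite[\S 8]{Joy21} word for word, replacing the framing of $L^\vee$ by a framing of $E^\vee\otimes V$. Joyce's construction takes a pair $(F,V,\vp)$ and adjoins an extra framing $\mathcal O(-n)^{\oplus k}\to F$ (or the analogue on $V$). The resulting stacks carry perfect obstruction theories and have proper $\mb C^*$-localizable master spaces. Because the framing only involves $F$ and $V$ and not $E$, the rank of $E$ enters only through Euler-form constants. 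Uniqueness of the classes is then automatic from Joyce's characterization.

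\emph{Main obstacle.} The hardest step is checking that these auxiliary stacks satisfy Joyce's assumptions for arbitrary rank $E$. The concern is the compatibility of obstruction theories on the auxiliary stacks, since the $\acute{\EXT}$ complex has $\mathrm{Ext}^2$ contributions. We must show these vanish or are trivial in the relevant range, so that the obstruction theories are perfect of amplitude $[-1,0]$. We would first attempt this by a direct long exact sequence argument, using $\mathrm{Ext}^2_C=0$ together with stability to kill $\mathrm{Hom}$ in the reverse direction.
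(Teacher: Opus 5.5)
Your reduction is the paper's: rerun the proof of \cite[Theorem 8.24]{Joy21} with $L$ replaced by $E^\vee$ and check Joyce's assumptions (the paper cites Assumptions 4.4 and 5.1--5.3). The problem is the step you call the main obstacle. The vanishing of $\mathrm{Ext}^2$ is not required by Joyce's framework, and on the diagonal it is false. From the triangle \eqref{equation definition of ext complex} and $\mathrm{Ext}^2_C=0$, a stable pair $x=(F,\C,\vp)$ gives
\[
\mathrm{Ext}^2_{\Adash}(x,x)\cong\coker\bigl(\mathrm{Ext}^1(F,F)\xrightarrow{\ \circ\vp\ }\mathrm{Ext}^1(E^\vee,F)\bigr).
\]
This is exactly the obstruction space of \eqref{equation POT on stable pairs}, not a spurious contribution. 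By Serre duality its dual is $\mathrm{Hom}(F,\ker(\vp)\otimes\omega_C)$. At a point $[0\to K\to E\to Q\to 0]$ of $\quot_{r,d}(E)\cong Q(E,\alpha)$, the pair is the dual map $E^\vee\to K^\vee$. So $F=K^\vee$ and $\ker\vp=\HOM(Q,\O_C)$, which gives $\mathrm{Ext}^2_{\Adash}(x,x)\cong\mathrm{Ext}^1(K,Q)$, the obstruction space of \cite{MO07c}. This space is nonzero wherever the Quot scheme has excess dimension. No stability argument can kill it: if it vanished everywhere, the virtual class would just be the fundamental class of a smooth scheme.

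What the assumptions actually need is that $\acute{\EXT}$ has cohomology only in degrees $0,1,2$, which you already get from $\mathrm{Ext}^{\geqslant 2}_C=0$. They also need $\mathrm{Ext}^0_{\Adash}(x,x)=\C$ for stable $x$. After rigidification, $\mathrm{Ext}^1$ and $\mathrm{Ext}^2$ are then the deformation and obstruction spaces of a perfect obstruction theory of amplitude $[-1,0]$, with no vanishing of $\mathrm{Ext}^2$ needed. Likewise, once the framing functor is exact on the relevant bounded family (the usual $n\gg0$ vanishing), the auxiliary framing only adds a degree-$0$ term to the defining cone, so no $\mathrm{Ext}^3$ appears. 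Joyce's checks in \cite[\S 8.2]{Joy21} then carry over unchanged, and this is all the paper invokes. So drop that step rather than trying to prove it.

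For the remaining inputs, the paper gets finite type of $\Mdash^{\delta-\ss}_{\alpha,e}$ by embedding $\Adash$ into $\mathrm{Coh}(C\times\mb P^1)$ via \cite{GP94}, as in \cite[Proposition 8.12]{Joy21}. It takes $\Bdash=\Adash$ for Assumption 4.4 and restricts the permissible classes to $\rk(\alpha)>0$, $e\leqslant 1$. Your direct Harder--Narasimhan bound plus \cite{Lin18} is a workable substitute for finite type. You should, however, impose the same restriction on permissible classes instead of treating all $(\alpha,e)$ with $e\in\{0,1\}$.
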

    \begin{proof}
        The proof is same as the proof of \cite[Theorem 8.24]{Joy21}
        once we replace the line bundle $L$ with the vector bundle $E^\vee$ in the definition of $\Adash$.
        We only need to check that our setup satisfies all the required assumptions in \cite{Joy21}, i.e., assumptions 4.4 and 5.1-5.3.
        As in section 8.1.1 of \cite{Joy21},
        we can embed $\Adash$ into $Coh(X \times \mb P^1)$ using
        \cite{GP94}.
       Then similarly to \cite[Proposition 8.12]{Joy21}, we conclude 
       that $\Mdash^{\delta-\ss}_{\alpha,e}$ are of finite type.
        
        Now the assumption 4.4 follows similarly to \cite[Section 8.2.1]{Joy21},
        taking $\Bdash= \Adash$.
        Assumptions 5.1-5.3 also follows from \cite[Section 8.2.2-8.2.4]{Joy21},
        once we define the set of permissible classes
        $C(\Bdash)_\pe = \{(\alpha,e) \in C(\Bdash) : \rk(\alpha)>0 \text{ and } e\leqslant 1\}$.
    \end{proof}

    The inclusion $\iota : \M \hookrightarrow \Mdash$ sending $F$ to $(F,0,0)$ realizes $V_\bullet$ as a vertex subalgebra of $\acute{V}_\bullet$.
    The inclusion $\iota$ induces an inclusion $\iota^\pl: \M^\pl \hookrightarrow \Mdash^\pl$.
    For any $\alpha \in K(C)$ with $\alpha>0$
    and for any $\delta >0$, this inclusion
    identifies the moduli stack $\M^{\ss}_\alpha$ with the stack $\Mdash^{\delta-\ss}_{\alpha,0}$ and 
    hence identifies the sheaf invariant $\sfM_\alpha$ with the pair invariant $\sfMdash^\delta_{\alpha,0}$.
    With the help of the above theorem, one can define Joyce's invariant for $Q(E,\alpha)$.
    \begin{theorem}\label{theorem quot invariant}
        Let $\alpha \in K(C)$ with $0<\rk(\alpha)< \rk(E)$. 
        There are unique classes $[P(E,\alpha)]_\inv$ and $[Q(E,\alpha)]_\inv$ in the shifted homology 
        $\wh H_0(\Mdash_{\alpha,1}^{\pl})$.
        The class $[P(E,\alpha)]_\inv$ is equal to the pushforward of the virtual fundamental class $[P(E,\alpha)]^\vir$ under the inclusion $P(E,\alpha) = \Mdash^{0+\st}_{\alpha,1} \hookrightarrow \Mdash^\pl_{\alpha,1}$.
        The class $[Q(E,\alpha)]_\inv$ is equal to the pushforward of the virtual fundamental class $[Q(E,\alpha)]^\vir$ under the inclusion $Q(E,\alpha) = \Mdash^{\infty-\st}_{\alpha,1} \hookrightarrow \Mdash^\pl_{\alpha,1}$.
        Moreover, we have the following wall-crossing formula
        \begin{align}\label{equation wall crossing from quot to pair}
             \nonumber   [Q{(E,\alpha)}]_\inv
                = \sum_{\substack{n \geqslant 0, \sum \alpha_i = \alpha, \\ 
                \alpha_i=(r_i,d_i), r_i>0 \text{ for all }i,\\
                \mu_{\min}(E^\vee)\leqslant d_0/r_0<d_1/r_1\leqslant\cdots\leqslant d_n/r_n}}
                & \frac{(-1)^{n}}{\prod_{i=1}^l (a_i - a_{i-1})!} \cdot \\ & 
                \left[\Big[ \cdots \Big[\Big[[P(E,\alpha_0)]_\inv,\sfM_{\alpha_1} \Big], \sfM_{\alpha_2} \Big],\dots \Big],\sfM_{\alpha_n}\right]
            \end{align}
            in the Lie algebra $\wh H_{0}(\Mdash^{\pl})$.
        \end{theorem}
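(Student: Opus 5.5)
The plan is to deduce everything from Theorem \ref{theorem wallcrossing formula}, applied with $e=1$ and with $\delta'$ in the rightmost chamber, $\delta' > \delta_\infty$, and $\delta$ in the leftmost chamber, $\delta<1/r$ where $\alpha = (r,d)$.

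\textbf{Existence and identification of the classes.} By Propositions \ref{proposition stability and gg} and \ref{proposition stability 0+ chamber}, in both extreme chambers there are no strictly semistable pairs of type $(\alpha,1)$. Part (1) of Theorem \ref{theorem wallcrossing formula} then gives unique classes
$$[Q(E,\alpha)]_\inv := \sfMdash^{\delta'}_{\alpha,1} = [Q(E,\alpha)]^\vir, \qquad [P(E,\alpha)]_\inv := \sfMdash^{\delta}_{\alpha,1} = [P(E,\alpha)]^\vir$$
in $\wh H_0(\Mdash^\pl_{\alpha,1})$. These classes do not depend on the particular $\delta,\delta'$ chosen: inside a chamber the stable loci do not change, so the virtual classes are literally equal. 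Uniqueness is inherited from the theorem.

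\textbf{The wall-crossing formula.} Apply \eqref{equation wall-crossing theorem} with $\delta \to 0^+$, and expand the sum over decompositions $(\alpha_i,e_i)$ with $\sum e_i=1$. Exactly one index has $e_{i_0}=1$, and every other term is a sheaf invariant $\sfMdash^\delta_{\alpha_i,0}=\sfM_{\alpha_i}$.

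The nonemptiness condition imposes the following constraints.
\begin{itemize}
\item $\Mdash^{\delta-\ss}_{\alpha_{i_0},1}=P(E,\alpha_{i_0})\neq\varnothing$ requires a nonzero map $E^\vee\to F$ with $F$ semistable, so $\mu(\alpha_{i_0}) \geqslant \mu_{\min}(E^\vee)$.
\item $\M^{\ss}_{\alpha_i}\neq \varnothing$ for the other indices.
\end{itemize}

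Next, use the explicit form of Joyce's coefficients $\tilde U$ for a pair of chambers of this dominant/limiting type, as in the computation in \cite[Theorem 8.24 / \S 8.3]{Joy21} for the line bundle case. That computation shows the following.
\begin{itemize}
\item The only nonzero contributions come from orderings in which the pair class sits first.
\item The sheaf classes that follow are ordered with nondecreasing slopes, and the first of them has slope strictly greater than $\mu(\alpha_0)$.
\item The coefficient equals $(-1)^n/\prod (a_i-a_{i-1})!$, where the $a_i$ record the blocks of equal slope.
\end{itemize}
After relabeling the pair index as $0$, this yields \eqref{equation wall crossing from quot to pair}. Two further points are needed.
\begin{itemize}
\item The lower bound $\mu_{\min}(E^\vee)\leqslant d_0/r_0$ comes from the nonemptiness of $P(E,\alpha_0)$.
\item Finiteness of the sum holds because $r_i>0$ and the slopes are bounded below by $\mu_{\min}(E^\vee)$, with total degree fixed.
\end{itemize}

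\textbf{Main obstacle.} The hard step is the combinatorial evaluation of $\tilde U(\ldots;\mu^{\delta'},\mu^\delta)$. My approach would be to transfer Joyce's line bundle computation verbatim. The coefficients depend only on the slope functions $\mu^\delta,\mu^{\delta'}$ evaluated on classes, and these are formally identical to the line bundle case. I would check this in two steps.
\begin{itemize}
\item First, for $\delta'\gg0$ any class with $e=1$ has slope larger than any class with $e=0$ among the finitely many relevant ones. This is where $\delta'>\delta_\infty$ and the finiteness of the decompositions are used.
\item Second, for $\delta\to 0^+$, $\mu^\delta$ refines the slope by placing a class with $e=1$ infinitesimally above the sheaf classes of the same slope.
\end{itemize}
This accounts for the strict inequality $d_0/r_0<d_1/r_1$. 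The resulting $\tilde U$ reduces to Joyce's $U$-coefficients for a "dominant" change of stability, which equal the stated signed inverse factorials.
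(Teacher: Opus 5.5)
Your plan matches the paper's proof in all the structural steps. You choose $\delta<1/r$ and $\delta'>\delta_\infty$, identify both classes with virtual classes via part (1) of Theorem~\ref{theorem wallcrossing formula}, and restrict \eqref{equation wall-crossing theorem} to sequences with a single $e_i=1$. You then get $\mu_{\min}(E^\vee)\leqslant d_0/r_0$ and finiteness from the non-emptiness of $P(E,\alpha_0)$.

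The gap is in the one step with real content: the evaluation of $\tilde U(\ldots;\mu^{\delta'},\mu^{\delta})$, which you assert rather than derive. Your justification, that it ``reduces to Joyce's $U$-coefficients for a dominant change of stability'', does not apply here. Take a pair class $(\alpha_0,1)$ and a sheaf class $(\alpha_1,0)$ with $d_0/r_0<d_1/r_1$, which is exactly the configuration appearing in the formula. Then $\mu^{\delta}(\alpha_0,1)<\mu^{\delta}(\alpha_1,0)$ but $\mu^{\delta'}(\alpha_0,1)>\mu^{\delta'}(\alpha_1,0)$, so neither stability condition dominates the other. Even granting the $U$'s, they are not themselves the numbers $(-1)^n/\prod(a_i-a_{i-1})!$; those only emerge after the ordered products are regrouped into nested brackets. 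Appealing to a line-bundle computation therefore does not close the step unless that specific extreme-chamber computation is actually carried out; the paper does not rely on such a transfer but computes directly.

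What is needed, and what the paper does via \cite[Definition 3.10]{Joy21}, runs as follows.
\begin{enumerate}
\item Place the pair class at position $j+1$. For generic $\delta'$, the condition $\mu^{\delta'}(\gamma_i)=\mu^{\delta'}(\alpha,1)$ forces every $\gamma_i$ to contain the pair class. Hence $l=1$, and $U$ becomes $S(\beta_1,\dots,\beta_m;\mu^\delta,\mu^{\delta'})$ weighted by the factorials of the equal-slope blocks.
\item Your two observations then decide every condition in $S$. These are that the $e=1$ side always wins for $\mu^{\delta'}$, and that $\mu^\delta$ places the pair class just above sheaf classes of the same slope, together with the see-saw property. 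They show that $S=\pm1$, with sign $(-1)$ to the number of sheaf classes after the pair, exactly when the $\mu^\delta$-slopes strictly decrease down to the pair class and are non-decreasing after it, with every sheaf slope $>d_0/r_0$. Otherwise $S=0$.
\item Finally, check that these ordered products reassemble into brackets with the pair class first. For sheaf classes $A,B$ with $d_0/r_0<\mu(A)<\mu(B)$, the admissible orderings $PAB,APB,BPA,BAP$ carry signs $+,-,-,+$ and sum to $[[P,A],B]$. When slopes are equal, the coarser groupings produce the factors $1/(a_i-a_{i-1})!$.
\end{enumerate}
With this computation supplied, your argument becomes the paper's.
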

    \begin{proof}
    We choose $\delta< 1/r$ and $\delta' > \delta_\infty$.
    Then
    $\Mdash_{\alpha,1}^{\delta-\ss} = P(E,\alpha)$
    and $\Mdash_{\alpha,1}^{\delta'-\ss}= Q(E,\alpha)$.
    Applying Theorem \ref{theorem wallcrossing formula}, we get the invariants $[Q(E,\alpha)]_\inv$ and $[P(E,\alpha)]_\inv$,
    which coincide with $[Q(E,\alpha)]^\vir$ and $[P(E,\alpha)]^\vir$ respectively.
    Moreover, we get a wall-crossing formula of the form \eqref{equation wall-crossing theorem}.
    
    We compute the coefficients $\tilde U$. 
    First, we see that only one $e_i$ can be 1 and the others must be 0.
    So we need to consider the sequence of types
    $$(\alpha_1,0), \dots, (\alpha_j,0), (\alpha_{j+1},1), (\alpha_{j+2},0),\dots,(\alpha_n,0)$$
    where $\alpha_1 + \cdots + \alpha_n = \alpha = (r,d)$
    and $1 \leqslant j \leqslant n$.
    We use \cite[Definition 3.10]{Joy21} to calculate the coefficient
    $$U((\alpha_1,0), \dots, (\alpha_j,0), (\alpha_{j+1},1),(\alpha_{j+2},0),\dots,(\alpha_n,0); \mu^\delta,\mu^{\delta'})\,.$$
    We use the same notations as in Definition 3.10 of \cite{Joy21}
    to define the numbers $l,m,a_i,b_i,\beta_i,\gamma_i$, 
    setting $\mu^\delta = \tau$ and $\mu^{\delta'}=\tilde{\tau}$.
    The condition $\tilde{\tau}(\gamma_i)= \tilde{\tau}(\alpha_1 + \cdots + \alpha_n)$ for all $i$, forces all $\gamma_i$ to contain $(\alpha_{j+1},1)$. 
    Therefore, $l=1$, $\gamma_1 = \beta_1 +\cdots + \beta_m$ and we have
    $$U((\alpha_1,0), \dots, (\alpha_j,0), (\alpha_{j+1},1),(\alpha_{j+2},0),\dots,(\alpha_n,0); \mu^\delta,\mu^{\delta'}) = 
    S(\beta_1,\dots,\beta_m) \cdot \prod_{i=1}^n \frac{1}{(a_i-a_{i-1})!} \,.$$
    It is easy to see that
    $$ S(\beta_1,\dots,\beta_m) = 
    \begin{cases}
        \frac{(-1)^{n-1-j}}{\prod_{i=1}^l(a_i-a_{i-1})!}
        & \text{ if } d_1/r_1 \geqslant \dots \geqslant d_j/r_j > d_{j+1}/r_{j+1} < d_{j+2}/r_{j+2} \leqslant \cdots \leqslant d_n/r_n\, ,\\
        0 \, , &\text{ otherwise.}
    \end{cases}$$
    Comparing the coefficients $U$ and $\tilde U$, one can take
    \begin{align*}
        \tilde{U}((\alpha_1,0), \dots, (\alpha_j,0), & (\alpha_{j+1},1),(\alpha_{j+2},0),\dots,(\alpha_n,0); \mu^\delta,\mu^{\delta'}) \\
        &= \begin{cases}
            \frac{(-1)^{n-1}}{\prod_{i=1}^l (a_i - a_{i-1})!} \,, 
            & \text{ if $j=0$ and } d_1/r_1< d_2/r_2\leqslant \dots \leqslant d_n/r_n \,,\\
            0 \,, & \text{ otherwise ,}
        \end{cases}
    \end{align*}    
    where $a_0,a_1,\dots,a_l$ are as in the statement of the theorem.
    Moreover, we recall that $P(E,\alpha_1)$ is non-empty only when $d_1/r_1 \geqslant \mu_{\min}(E^\vee)$.
    Therefore, we get the following wall-crossing formula
    \begin{align*}
                [Q{(E,\alpha)}]_\inv
                = \sum_{\substack{n \geqslant 1, \sum \alpha_i = \alpha, \\ 
                \alpha_i=(r_i,d_i), r_i>0 \text{ for all }i,\\
                \mu_{\min}(E^\vee) \leqslant d_1/r_1<d_2/r_2\leqslant\cdots\leqslant d_n/r_n}}
                & \frac{(-1)^{n-1}}{\prod_{i=1}^l (a_i - a_{i-1})!} \cdot \\
                &\left[\Big[ \cdots \Big[ \Big[ [P(E,\alpha_1)]^\vir,\sfM_{\alpha_2} \Big], \sfM_{\alpha_3} \Big],\dots \Big],\sfM_{\alpha_n} \right] \,.
            \end{align*}
    The statement of the theorem follows by shifting the index number.
    \end{proof}

For applications in subsequent sections, we would express the class $[P(E,\alpha)]_\inv$ in terms of the classes $\sfM_{\alpha'}$.
Recall the category $\Adash$ of pairs defined in Definition \ref{Definition category of pairs}.
For each $p \in \mb Q$, let $\Adash_p$ be the full subcategory consisting of $E^\vee$-pairs $(F,V,\vp)$ with $F$ slope-semistable of slope $p$ or $F=0$.
Also, let us define 
$$ C(\Adash_p):= \{(\alpha,e) \in K(\Adash): (\alpha,e)>0 \text{ and slope of $\alpha$ is } p\}\,.$$
Fix $p \in \mb Q$. Following \cite[Section 13.2]{JS12}, 
for each $\delta \in \mb R$,
we define a stability condition $\mu_p^\delta$ on the category $\Adash_p$ as follows.
For a class $(\alpha,e) = ((r,d),e)\in C(\Adash_p)$, define
$$ \mu_p^\delta(\alpha,e) = 
\begin{cases}
            \frac{\delta e}{r}\,. \quad  & \textnormal{ if } r>0\,,\\
            0 & \textnormal{ if } r=0\,.
        \end{cases} $$
We notice that when $\delta>0$ is such that $\mu^\delta = \mu^{0+}$ (e.g., when $0 < \delta <1/r$),
the stability conditions $\mu_p^\delta$ are equivalent to the restriction of $\mu^{\delta}$ to $\Adash_p$.
The stability condition $\mu_p^0$ is a trivial condition.
When $\delta<0$, the stability conditions $\mu_p^\delta$ are all mutually equivalent.

One can verify that the family $\{\mu_p^\delta\}_\delta$ satisfies 
\cite[Assumptions 5.1-5.3]{Joy21} as done in \cite[Section 13]{JS12}. 
So we can apply the Wall-crossing formula \cite[Theorem 5.9]{Joy21} for the stability conditions $\mu_p^\delta$ on the category $\Adash_p$.
\begin{proposition}\label{proposition wall-crossing of pairs}
    Let $\alpha=(r,d) \in K(C)$ with $0<r<\rk(E)$.
    Then we have the following formula
    \begin{align*}
    [P(E,\alpha)]_\inv =
     \sum_{\substack{m \geqslant 1, \sum \alpha_i = \alpha, \\ 
                \alpha_i=(r_i,d_i), r_i>0 \text{ for all }i,\\
                d_i/r_i=d/r \text{ for all }i}}
                & \frac{(-1)^{m}}{m!} \cdot
    \left[ \left[ \cdots \left[ \left[\mathsf e^{((0,0),E^\vee)},\sfM_{\alpha_1} \right], \sfM_{\alpha_2} \right],\dots \right],\sfM_{\alpha_m}\right]\, ,
    \end{align*}
    where $\mathsf e^{((0,0),E^\vee)}$ is the class of the single point set $\{(0,\C,0)\}$ in $\wh H_{\bullet}(\Mdash^{\pl})$.
    \end{proposition}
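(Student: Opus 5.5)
We apply Joyce's wall-crossing formula \cite[Theorem 5.9]{Joy21} in the category $\Adash_p$ with $p=d/r$, crossing from a stability condition $\mu_p^{\delta_-}$ with $\delta_-<0$ to $\mu_p^{\delta_+}$ with $0<\delta_+<1/r$. By the remark preceding the statement, $\mu_p^{\delta_+}$ agrees on $\Adash_p$ with $\mu^{0+}$. Moreover, a $\mu^{0+}$-semistable pair of type $(\alpha,1)$ has $F$ semistable of slope $p$ by Proposition \ref{proposition stability 0+ chamber}, so it lies in $\Adash_p$. Hence the $\mu_p^{\delta_+}$-invariant of class $(\alpha,1)$ is exactly $[P(E,\alpha)]_\inv$. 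For classes $(\alpha_i,0)$ every stability condition in the family is trivial, so the invariants are the sheaf invariants $\sfM_{\alpha_i}$ under the identification $\iota^\pl$.

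The first step is to identify the $\delta_-$ side. For $\delta<0$ and a class $(\alpha',1)$ with $\rk(\alpha')>0$, a pair $(F,\C,\vp)$ has the subobject $(0,\C,0)$ of type $((0,0),1)$. This subobject has $\mu_p^{\delta}=0$, while the pair itself has $\mu_p^\delta=\delta/r'<0$. It therefore destabilizes the pair, so the semistable locus of class $(\alpha',1)$ is empty. The only nonempty class with $e=1$ is $((0,0),1)$, whose moduli space is the single point $\{(0,\C,0)\}$. Its invariant is the point class $\mathsf e^{((0,0),E^\vee)}$, since that stack is $\spec\C$ after rigidification and the shift is zero. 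Classes with $e\geqslant 2$ do not arise because $\sum e_i=1$.

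The wall-crossing formula then expresses $[P(E,\alpha)]_\inv$ as a sum over decompositions $(\alpha,1)=((0,0),1)+\sum_{i=1}^m(\alpha_i,0)$ with all $\alpha_i$ of slope $p$. The terms are iterated brackets of $\mathsf e^{((0,0),E^\vee)}$ with the $\sfM_{\alpha_i}$, placed in the various orderings, with coefficients $\tilde U(\,\cdot\,;\mu_p^{\delta_-},\mu_p^{\delta_+})$. Using antisymmetry and the Jacobi identity, as in the proof of Theorem \ref{theorem quot invariant}, these can be rewritten as the left-normed bracket with $\mathsf e$ first. The structure here is simpler than there, because all $\alpha_i$ have the same slope, so every $\mu_p^{\delta}(\alpha_i,0)=0$.

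The main obstacle is computing the coefficients. We would do this by evaluating $S$ and $U$ from \cite[Definition 3.10]{Joy21} directly. Under $\mu_p^{\delta_+}$ the class $((0,0),1)$ has slope $+\infty$, or is treated as the maximal-slope class, and all $(\alpha_i,0)$ have slope $0$. Under $\mu_p^{\delta_-}$, $((0,0),1)$ is the unique class on the positive side and any partial sum containing it has negative slope. So $S$ is nonzero only for the ordering in which the $e=1$ class comes first, and there it equals $(-1)^m$. The ties among the $m$ equal slopes produce the factor $1/m!$, exactly as the factor $1/\prod(a_i-a_{i-1})!$ arises in Theorem \ref{theorem quot invariant}, now with $l=1$ and $a_1=m$.

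Alternatively, we can match the computation with \cite[Theorem 8.24 and Section 13]{JS12}/\cite{Joy21} for line bundles, where the same formula appears with $L$ in place of $E^\vee$. The combinatorics is independent of $E$, so the coefficients transfer unchanged. Finiteness of the sum holds because $\alpha_i$ ranges over decompositions of $\alpha$ into positive-rank classes of fixed slope.
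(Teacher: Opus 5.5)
Your route is the paper's: Joyce's Theorem 5.9 applied in $\Adash_p$, $p=d/r$, between $\mu_p^{\delta}$ with $\delta<0$ and $\mu_p^{\delta'}$ with $0<\delta'<1/r$, with the same identification of the two chambers. Two steps are wrong as written, and the second is the one you yourself call the main obstacle.

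First, a smaller slip. $(0,\C,0)$ is not a subobject of $(F,\C,\vp)$ unless $\vp=0$, since the compatibility condition for $(0,\mathrm{id})$ forces $\vp=0$; it is a quotient. The destabilizing subobject for $\delta<0$ is $(F,0,0)$, with $\mu_p^\delta=0>\delta/r'$, so your emptiness conclusion survives. Also, for the $\mu_p^\delta$ to satisfy the weak see-saw property, the class $((0,0),1)$ must have slope $-\infty$ when $\delta<0$ and $+\infty$ when $\delta>0$. You use $0$ in one place and $+\infty$ in another.

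Second, with the consistent convention it is false that $S$, and hence $U$, vanishes unless the $e=1$ class comes first. Write $\mathsf e=\mathsf e^{((0,0),E^\vee)}$, and suppose $a$ of the classes $(\alpha_i,0)$ precede $\mathsf e$ and $b=m-a$ follow. Then $l=1$, and the preceding $a$ classes must form a single $\psi$-block. Summing over the groupings of the following $b$ classes gives $U=(-1)^b/(a!\,b!)$. The factor $1/b!$ comes from $\sum_c(-1)^c\sum_{b_1+\cdots+b_c=b}\prod_i 1/b_i!=[x^b]\exp(-x)$, not from a single block of ties, which by itself contributes $-1/b!$. If only $\mathsf e$-first orderings survived, the $U$-sum would not even be a Lie element: for $m=1$ you need $U=+1$ on the ordering $(\alpha_1,0),((0,0),1)$ to obtain $-[\mathsf e,\sfM_{\alpha_1}]$.

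What is missing is the passage from $U$ to $\tilde U$, which is exactly what "one can take $\tilde U$" in the paper encodes. The $U$-sum is a Lie element linear in $\mathsf e$. The left-normed brackets $[[\cdots[\mathsf e,x_{\sigma(1)}],\ldots],x_{\sigma(m)}]$ span that part, and each contains exactly one word beginning with $\mathsf e$, namely $\mathsf e\,x_{\sigma(1)}\cdots x_{\sigma(m)}$, with coefficient $1$. Hence $\tilde U$ can be taken supported on $\mathsf e$-first orderings, with value the $a=0$ coefficient $(-1)^m/m!$.

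A cleaner alternative is to compare Harder--Narasimhan filtrations of objects of class $(\alpha,1)$ in $\Adash_p$ in the two chambers. For $\delta<0$ the filtration is $(F,0,0)\subset(F,\C,\vp)$; for $\delta'$ it is a semistable pair followed by a slope-$p$ sheaf. This gives $D*\mathsf e=P*D$ in the Hall algebra, where $D=\exp(\epsilon)$ generates the slope-$p$ semistable sheaves and $P$ generates the $\delta'$-semistable pairs. Hence $P=\exp(\epsilon)\,\mathsf e\,\exp(-\epsilon)=\exp(\mathrm{ad}_\epsilon)(\mathsf e)$, which expands to exactly the stated formula.

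With either repair your argument proves the proposition. As written, the coefficient computation does not justify it.
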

    \begin{proof}
        We take $p=d/r$ and consider the category $\Adash_p$ and the stability conditions $\mu_p^\delta$ defined above.
        We apply the wall-crossing formula from \cite[Theorem 5.9]{Joy21} taking $\tau = \mu_p^\delta$ with $\delta =-1$ and $\tilde{\tau} = \mu_p^{\delta'}$ with $\delta'<1/r$.
        We have the following formula
        \begin{align*}
        [\Mdash^{\ss}_{\alpha,1}(\mu_p^{\delta'})]_\inv =
        &\sum_{\substack{m \geqslant 1, \sum \alpha_i = \alpha,
                \sum e_i = 1 \\ 
                \alpha_i=(r_i,d_i), r_i>0 \text{ for all }i,\\
                \Mdash^{\ss}_{(\alpha_i,e_i)}(\mu_p^{-1}) \neq \varnothing,  d_i/r_i= p \text{ for all }i
                }}
                \tilde U((\alpha_1,e_1),\dots,(\alpha_n,e_n);\mu_p^{\delta},\mu_p^{\delta'}) \cdot \\   
        & \qquad \quad \left[\Big[ \cdots \Big[ [\Mdash^{\ss}_{\alpha_1,e_1}(\mu_p^{-1})]_\inv, [\Mdash^{\ss}_{\alpha_2,e_2}(\mu_p^{-1})]_\inv \Big],\dots \Big],[\Mdash^{\ss}_{\alpha_m,e_m}(\mu_p^{-1})]_\inv\right] \,.
    \end{align*}
        As $\delta' <1/r$, using Proposition \ref{proposition stability 0+ chamber},
        it follows that
        $\Mdash^{\ss}_{\alpha,1}(\mu_p^{\delta'})= P(E,\alpha)$ .
        Also, $\Mdash^{\ss}_{\alpha,1}(\mu_p^{-1}) = \varnothing$ whenever $\alpha \neq (0,0)$.
        And $\Mdash^{\ss}_{\alpha,0}(\mu_p^{-1})$ is same as the moduli space $\M^{\ss}_\alpha$ of semistable sheaves of type $\alpha$.
        So the formula becomes the following,
        \begin{align*}
        [P(E,\alpha)]_\inv =
        \sum_{\substack{m \geqslant 1, \sum \alpha_i = \alpha, 
                0 \leqslant j \leqslant m\\ 
                \alpha_i=(r_i,d_i), r_i>0 \text{ for all }i,\\
                \Mdash^{\ss}_{\alpha_i} \neq \varnothing,  d_i/r_i= p \text{ for all }i}}
                \tilde U((\alpha_1,0),\dots,(\alpha_j,0),(0,1),(\alpha_{j+1},0),\dots, (\alpha_n,0); \mu_p^{\delta},\mu_p^{\delta'}) \cdot & \\      
         \left[ \Big[ \cdots \Big[ 
        \sfM_{\alpha_1}, \sfM_{\alpha_2} \Big],\dots \Big],
        \sfM_{\alpha_j} \Big], [\Mdash^{\ss}_{0,1}(\mu_p^{-1})]_\inv \Big],\sfM_{\alpha_{j+1}} \Big],
        \dots \Big],\sfM_{\alpha_m} \right]\,.&
    \end{align*}
        The coefficients $\tilde U$ can be computed similarly as in the proof of Theorem \ref{theorem quot invariant}.
        One can take 
        $$\tilde U((\alpha_1,0),\dots,(\alpha_j,0), (0,1),(\alpha_{j+1},0),\dots, (\alpha_m,0); \mu_p^{-1},\mu_p^{\delta'}) = 
        \begin{cases}
            \frac{(-1)^m}{m!}, &\text{ if } j=0,\\
            0, &\textnormal{otherwise.}
        \end{cases}$$
    Since $\Mdash^{\ss}_{(0,0),1}(\mu_p^{-1})$ is a single point set $\{(0,\C,0)\}$, we write the class $[\Mdash^{\ss}_{(0,0),1}(\mu_p^{-1})]_\inv$ as $\mathsf e^{((0,0),E^\vee)}$.
    This proves the theorem.
    \end{proof}

We translate the theorems above in terms of Quot schemes.
For the Quot scheme $\quot_{r,d}(E)$, recall the isomorphism
$\mf f : \quot_{r,d}(E) \to Q(E,\alpha)$
with $\alpha=(\rk(E)-r,d-\deg(E))$, constructed in Proposition \ref{proposition isomorphism Quot and Q}.
Composing this isomorphism with the inclusion $Q(E,\alpha) \subset \Mdash^\pl_{\alpha,1}$,
we get an open embedding, which we call $\mf f$ as well,
$$\mf f: \quot_{r,d}(E) \to \Mdash^\pl_{\alpha,1}\,.$$
\begin{theorem}\label{theorem wall crossing for quot scheme}
        Let $r$ and $d$ be integers such that $0<r<\rk(E)$.
        Define $\alpha=(\rk(E)-r,d-\deg(E))$.
        There is a unique class 
        $$[\quot_{r,d}(E)]_\inv \in \wh H_0(\Mdash^\pl_{\alpha,1})\,,$$
        which is equal to the pushforward class
        $$\mf f_*[\quot_{r,d}(E)]^\vir\,.$$
        The invariant satisfies the wall-crossing formula
        \begin{align}\label{equation wall crossing for quot to e^001}
        \nonumber
    [\quot_{r,d}(E)]_\inv
            =\sum_{\substack{n\geqslant 1, \sum \alpha_i = \alpha, \\ 
            \alpha_i=(r_i,d_i), r_i>0 \text{ for all }i,\\
            \mu_{\min}(E^\vee) \leqslant d_1/r_1\leqslant\cdots\leqslant d_n/r_n}} &
            \frac{(-1)^{n}}{\prod_{i=1}^l (a_i - a_{i-1})!} \cdot \\
            &  \left[\left[ \cdots
     \left[ \left[\mathsf e^{((0,0),E^\vee)},\sfM_{\alpha_1} \right], \sfM_{\alpha_2} \right],\dots \right], \sfM_{\alpha_n} \right]
    \end{align}
    in $\wh H_{0}(\Mdash^{\pl})$,
    where $\mathsf e^{((0,0),E^\vee)}$ is the class of the single point 
    set $\{(0,\C,0)\}$ in $\wh H_{0}(\Mdash^{\pl})$
    and the numbers $0=a_0<\cdots<a_l=n$ are defined such that for any $0<i<n$, 
    we have $d_i/r_i<d_{r+1}/r_{i+1}$ if and only if $i=a_k$ for some $0<k<l$.
    \end{theorem}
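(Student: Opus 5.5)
The plan is to define $[\quot_{r,d}(E)]_\inv$ by transporting $[Q(E,\alpha)]_\inv$ along $\mf f$, and then to substitute Proposition \ref{proposition wall-crossing of pairs} into the wall-crossing formula \eqref{equation wall crossing from quot to pair}. After that, the iterated brackets are regrouped and the coefficients recombined.

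\textbf{Step 1: existence, uniqueness and identification.} Set $\alpha=(\rk(E)-r,d-\deg(E))$. Then $0<\rk(\alpha)<\rk(E)$, so Theorem \ref{theorem quot invariant} applies and gives a unique class $[Q(E,\alpha)]_\inv\in\wh H_0(\Mdash^\pl_{\alpha,1})$ equal to the pushforward of $[Q(E,\alpha)]^\vir$. Define $[\quot_{r,d}(E)]_\inv:=[Q(E,\alpha)]_\inv$. By Proposition \ref{proposition isomorphism Quot and Q} we have $\mf f_*[\quot_{r,d}(E)]^\vir=[Q(E,\alpha)]^\vir$, and composing with the open inclusion gives the asserted equality.

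\textbf{Step 2: substitution.} In \eqref{equation wall crossing from quot to pair}, each term starts with $[P(E,\alpha_0)]_\inv$ where $\alpha_0=(r_0,d_0)$, $r_0>0$ and $d_0/r_0\geqslant\mu_{\min}(E^\vee)$. Replace it using Proposition \ref{proposition wall-crossing of pairs}:
$$[P(E,\alpha_0)]_\inv=\sum \frac{(-1)^m}{m!}\bigl[\cdots\bigl[\mathsf e^{((0,0),E^\vee)},\sfM_{\beta_1}\bigr],\dots,\sfM_{\beta_m}\bigr],$$
where $\sum\beta_j=\alpha_0$ and every $\beta_j$ has slope $d_0/r_0$. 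Nesting this inside the remaining brackets $[\,\cdot\,,\sfM_{\alpha_1}],\dots,[\,\cdot\,,\sfM_{\alpha_n}]$ produces one long left-nested bracket
$$[\cdots[\mathsf e^{((0,0),E^\vee)},\sfM_{\beta_1}],\dots,\sfM_{\beta_m},\sfM_{\alpha_1},\dots,\sfM_{\alpha_n}].$$
Its slopes form a weakly increasing sequence $\beta_1=\dots=\beta_m<\alpha_1\leqslant\dots\leqslant\alpha_n$, and the strict inequality is inherited from $d_0/r_0<d_1/r_1$.

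\textbf{Step 3: bijection of index sets.} Conversely, take any sequence $\gamma_1,\dots,\gamma_N$ with $\sum\gamma_i=\alpha$, all ranks positive, and slopes weakly increasing and at least $\mu_{\min}(E^\vee)$. It decomposes uniquely by letting the first block be the maximal initial run of equal slope $p=\mu(\gamma_1)$; these are the $\beta$'s, with $m=a_1$. Note that $p\geqslant\mu_{\min}(E^\vee)$ automatically. The remaining terms are the $\alpha_i$. So the sum is exactly over the index set in \eqref{equation wall crossing for quot to e^001}.

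\textbf{Step 4: coefficients.} For the combined sequence the breakpoints $a_k$ are $a_1=m$, followed by the breakpoints of the $\alpha$-part shifted by $m$. Hence the coefficient is
$$\frac{(-1)^m}{m!}\cdot\frac{(-1)^n}{\prod(\text{blocks of the }\alpha\text{ part})!}=\frac{(-1)^{N}}{\prod_{i=1}^l(a_i-a_{i-1})!},$$
with $N=m+n$. This is the claimed coefficient.

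The main obstacle is the bookkeeping of the blocks $a_i$. One point needs care: in Theorem \ref{theorem quot invariant} the $\alpha$-indices start after $\alpha_0$, so its $a_k$ are counted only over $\alpha_1,\dots,\alpha_n$. The index shift must be checked so that the factorials multiply correctly and the first run is not merged with a following run of the same slope; the latter cannot happen because of the strict inequality. Finiteness of all sums follows from finiteness in the two input formulas.
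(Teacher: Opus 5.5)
Your proposal is correct and follows the paper's proof. The invariant is defined as $[Q(E,\alpha)]_\inv$ and identified with $\mf f_*[\quot_{r,d}(E)]^\vir$ via Proposition~\ref{proposition isomorphism Quot and Q}, and the wall-crossing formula comes from substituting Proposition~\ref{proposition wall-crossing of pairs} into \eqref{equation wall crossing from quot to pair} and reindexing. Your explicit bijection (peeling off the maximal initial run of equal slope, kept separate by the strict inequality $d_0/r_0<d_1/r_1$) and the merging of $\frac{(-1)^m}{m!}$ into the block factorials spell out the paper's one-line step of replacing $n$ by $n+m$.
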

    \begin{proof}
        Indeed, we define 
        $$[\quot_{r,d}(E)]_\inv := [Q(E,\alpha)]_\inv $$
        with $\alpha=(\rk(E)-r,d-\deg(E))$.
        The fact that $[\quot_{r,d}(E)]_\inv = \mf f_*[\quot_{r,d}(E)]^\vir$ 
        is clear from Proposition \ref{proposition isomorphism Quot and Q}.

        For the wall crossing formula, we substitute the formula from Proposition 
        \eqref{proposition wall-crossing of pairs} in the wall-crossing formula 
        \eqref{equation wall crossing from quot to pair} to get the following formula
    \begin{align*}
    [Q(E,\alpha)]_\inv =
           & \sum_{\substack{n \geqslant 0, \sum \alpha_i = \alpha, \\ 
                \alpha_i=(r_i,d_i), r_i>0 \text{ for all }i,\\
                \mu_{\min}(E^\vee) \leqslant d_0/r_0<d_1/r_1\leqslant\cdots\leqslant d_n/r_n}}
                \frac{(-1)^{n}}{\prod_{i=1}^l (a_i - a_{i-1})!} \cdot 
            \sum_{\substack{m \geqslant 1, \sum \beta_j = \alpha_0, \\ 
                \beta_j=(d'_j,r'_j), r'_i>0 \text{ for all }j,\\
                d'_j/r'_j=d_0/r_0 \text{ for all }j}}
                \frac{(-1)^{m}}{m!} \cdot \\
    \nonumber   & \qquad \quad
    \Big[ \Big[ \cdots \Big[
    \left[\left[ \cdots
    \left[ \left[e^{((0,0),E^\vee)},\sfM_{\beta_1} \right], \sfM_{\beta_2} \right],\dots \right],\sfM_{\beta_m}\right], \sfM_{\alpha_1} \Big],
    \cdots \Big], \sfM_{\alpha_n} \Big]
    \end{align*}
    in $\wh H_{0}(\Mdash^{\pl})$.
    The numbers $a_i$ are defined as in the statement of the theorem.
    Replacing $n$ for $n+m$, we get our formula as in the statement of the theorem.
    \end{proof}

\vspace{.3cm}

\section{Computing invariant for $\quot_{\rk(E)-1,d}(E)$}\label{section computing invariant}
In this section, we compute the invariant 
$[Q(E,\alpha)]_\inv$ in $H_\bullet(\Mdash_{\alpha,1}^\pl;\mb C)$ for $\rk(\alpha) =1$.
We will use an explicit description of $H_\bullet(\Mdash_{\alpha,1}^\pl;\mb C)$ given by \cite{gros20}.
The computations in this section are very similar to those in \cite[Theorem 4.3]{Bu23}. 
However, there are many differences in the numerical values.
We therefore include the computations for the convenience of the reader.

Consider a symplectic basis for the cohomology ring of $C$ as 
$$\{1=\e_{1,0}, \e_{1,1}, \dots,\e_{2g,1},\e_{1,2}\}\, ,$$
so that $\e_{*,k} \in H^k(C;\mb C)$.
Let $J$ denote the index set of the basis, i.e.,
$$J = \{(1,0),(1,1),\dots,(2g,1),(1,2)\}\,.$$
Let $\{\e^\vee_{j,k} : (j,k)\in J\}$ denote the dual basis of $H_\bullet(C;\mb C)$.
There is a universal perfect complex $\W_\alpha$ on $C \times \M_\alpha$.
We define the cohomology classes
\begin{align}\label{equation definition S_j,k,l classes}
\qquad S^\alpha_{j,k,l} := \ch_l(\W_\alpha) \setminus \e^\vee_{j,k} 
 \qquad \text{ for }(j,k)\in J \text{ and } l \geqslant k/2\,, 
\end{align}
where 
$$\setminus : H^{2l}(C \times \M_\alpha;\mb C) \otimes H_k(C;\mb C) \to H^{2l-k}(\M_\alpha;\mb C) $$
denotes the slant product.
Theorem 4.15 of \cite{gros20} says that there is an isomorphism
$$H^\bullet(\M_\alpha;\mb C) \cong \mb C[S^\alpha_{j,k,l}: (j,k)\in J, l>k/2]\,,$$
as graded commutative algebras with $\deg(S^\alpha_{j,k,l}) = 2l-k$.
Let $\perf_e$ denote the moduli stack of perfect complexes on $\spec \, \C$ of rank $e$.
The universal pair $\Theta: E^\vee \boxtimes \V \to \U$ on $C \times \Mdash$ induces a map
$$ \Pi_{\alpha,e}^\U \times \Pi_{\alpha,e}^\V : \Mdash_{\alpha,e} \to \M_\alpha \times \perf_e$$
which is an $\mb A^1$-homotopy equivalence.
This gives an isomorphism, 
\begin{align*}
H^\bullet(\Mdash_{\alpha,e};\mb C) & \cong H^\bullet(\M_\alpha;\mb C) \otimes H^\bullet(\perf_e;\mb C)\\ 
& \cong \mb C[S^\alpha_{j,k,l}: (j,k)\in J, l>k/2] \otimes \mb C[R^e_{l} : l>0]\,.
\end{align*}
as graded algebras, where $R^e_l$ is defined as $\ch_l(\V_e)$ and $\deg(R^e_l) = 2l$.
Let us define $$\acute J = J \sqcup \{(+,0)\}$$ and
for $(j,k)\in \acute J$, define the classes $S^{(\alpha,e)}_{j,k,l}$
in $H^{2l-k}(\Mdash_{\alpha,e};\mb C)$
as the following,
\begin{align*}
    S^{(\alpha,e)}_{j,k,l} = 
    \begin{cases}
        (\Pi_{\alpha,e}^\U)^*S^{\alpha}_{j,k,l} \qquad &\text{ if }(j,k) \in J\,,\\
        (\Pi_{\alpha,e}^\V)^*R^e_l &\text{ if }(j,k) = (+,0)\,.\\
    \end{cases}
\end{align*}
It follows that 
$$H^\bullet(\Mdash_{\alpha,e};\mb C) \cong \mb C[S^{(\alpha,e)}_{j,k,l}: (j,k)\in \acute J, l>k/2]\,.$$
We will compute $\ch(\acute{\mc E})$ in terms of the classes $S^{(\alpha,e)}_{j,k,l}$. 
For $(j,k), (j',k') \in J$,
define the numbers
$${M}_{j,k}^{j',k'} :=
        \int_C \epsilon_{j,k} \cup \epsilon_{j',k'} \cup \td(C)\,,$$
where $\td (C) = 1 + (1-g) \, \epsilon_{1,2}$
is the Todd class of $C$.
From the definition of $\acute \EXT$ complex (given by \eqref{equation definition of ext complex})
on $\Mdash \times \Mdash$, we have
\begin{align*}
    \ch(\acute \EXT) & = 
    \ch\left((\pi_{23})_*(\pi_{12}^*\U^\vee \otimes \pi_{13}^*\U) \right)
    + \ch(q_2^*\V^\vee \otimes q_3^*\V) -
    \ch\left((\pi_{23})_*(\pi_{12}^*(E^\vee \boxtimes \V)^\vee \otimes \pi_{13}^*\U)\right)
\end{align*}
We restrict everything on the component $\Mdash_{\alpha,e} \times \Mdash_{\alpha',e'}$.
Using Grothendieck-Riemann-Roch, the first term becomes
\begin{align*}
    & \ch\left((\pi_{23})_* (\pi_{12}^*\U_{\alpha,e}^\vee \otimes \pi_{13}^*\U_{\alpha',e'}) \right) 
     = (\pi_{23})_* (\ch((\pi_{12}^*\U_{\alpha,e}^\vee \otimes \pi_{13}^*\U_{\alpha',e'}) \cup \pi_1^*\td(C))) \\
    & \quad = (\pi_{23})_* \left(
    \left(\sum_{\substack{(j,k) \in J \\ l \geqslant k/2}} (-1)^l \e_{j,k} \boxtimes S^{(\alpha,e)}_{j,k,l} \boxtimes 1 \right) \cdot 
    \left(\sum_{\substack{(j',k') \in J \\ l' \geqslant k/2}} \e_{j',k'} \boxtimes  1 \boxtimes S^{(\alpha',e')}_{j',k',l'}\right) \cdot 
    \big( \td(C) \boxtimes 1 \boxtimes 1 \big)
    \right) \\
    & \quad  = \sum_{\substack{(j,k) \in J,\, l \geqslant k/2 \\ (j',k') \in J,\, l' \geqslant k/2}} (-1)^l \ {M}_{j,k}^{j',k'} 
    \ S^{(\alpha,e)}_{j,k,l} \boxtimes S^{(\alpha',e')}_{j',k',l'}
\end{align*}
Calculating the other terms similarly, we have
\begin{align*}
   \ch(\acute \EXT_{(\alpha,e), (\alpha',e')}) 
    & = \sum_{\substack{(j,k) \in J,\, l \geqslant k/2 \\ (j',k') \in J,\, l' \geqslant k/2}} (-1)^l \ {M}_{j,k}^{j',k'} 
    \ S^{(\alpha,e)}_{j,k,l} \boxtimes S^{(\alpha',e')}_{j',k',l'}
    + \sum_{l,l'} (-1)^l S^{(\alpha,e)}_{+,0,l} \boxtimes S^{(\alpha',e')}_{+,0,l'} \\ 
    & \qquad \qquad - \left( \sum_{l} \int_C \ch(E) \cup \e_{j,k} \cup \td(C) \right) \cdot S^{(\alpha,e)}_{+,0,l} \boxtimes S^{(\alpha',e')}_{j',k',l'} \\
    & = \sum_{\substack{(j,k) \in \acute J,\, l \geqslant k/2 \\ (j',k') \in \acute J,\, l' \geqslant k/2}} (-1)^l \ \acute{M}_{j,k}^{j',k'} \cdot
    S^{(\alpha,e)}_{j,k,l} \boxtimes S^{(\alpha',e')}_{j',k',l'}
\end{align*}
where we define $\acute M_{j,k}^{j',k'}$ as the following
\begin{equation}
    \acute{M}_{j,k}^{j',k'} =
    \begin{cases}
    \displaystyle
        M_{j,k}^{j',k'} &
        \text{ if } (j, k) \neq ({+}, 0) \neq (j', k')\,, \\
        \displaystyle
        -\int_C \ch(E) \cup \epsilon_{j',k'} \cup \td(C) &
        \text{ if } (j, k) = ({+}, 0) \neq (j', k')\,, \\
        0 &
        \text{ if } (j, k) \neq ({+}, 0) = (j', k') \,, \\
        1 &
        \text{ if } (j, k) = ({+}, 0) = (j', k')\,,
    \end{cases}
\end{equation}
So the chern character of $\acute{\mc E}$ can be written as
\begin{equation*}
    \ch(\acute{\mc E}_{(\alpha,e), (\alpha',e')}) 
    = \sum_{\substack{(j,k) \in \acute J,\, l \geqslant k/2 \\ (j',k') \in \acute J,\, l' \geqslant k/2}} (-1)^{l'-(k+k')/2} \ \acute{M}_{j,k}^{j',k'} \cdot
     S^{(\alpha,e)}_{j,k,l} \boxtimes S^{(\alpha',e')}_{j',k',l'}
\end{equation*}
In particular, for $\alpha=(r,d)$ and $\alpha'=(r',d')$,
we have
\begin{align}\label{equation formula for chi}
    \nonumber
    \acute{\chi}((\alpha,e), (\alpha',e')) 
    & = \ch_0(\acute{\mc E}_{(\alpha,e), (\alpha',e')}) 
     = \sum_{\substack{(j,k) \in \acute J,\, l= k/2 \\ (j',k') \in \acute J,\, l' = k/2}} (-1)^{l'-(k+k')/2} \ \acute{M}_{j,k}^{j',k'} \cdot
     S^{(\alpha,e)}_{j,k,l} \boxtimes S^{(\alpha',e')}_{j',k',l'}\\
     & = (1-g)rr'-\chi(E)\,r'e -\rk(E)\, d'e+ee'+rd'-r'd\,.
\end{align}
For the homology of the stack $\Mdash$, we define an isomorphism
$$H_\bullet(\Mdash_{\alpha,e};\mb C) \cong \ex^{(\alpha,e)} \cdot \mb C[s_{j,k,l}: (j,k)\in \acute J, l>k/2]\,,$$
where $\ex^{(\alpha,e)}$ is a formal symbol keeping track of the type $(\alpha,e)$ and
$s_{j,k,l}$ is of degree $2l-k$, with the following pairing formula
\begin{align}
    \label{equation definition s_jkl by dual}
    & \left\langle \biggl(
        \prod_{ \substack{ (j, k) \in \acute{J} \\ l > k/2 } }
        \left(S^{(\alpha,e)}_{j,k,l}\right)^{m_{j,k,l}}
    \biggr) \,,
    \biggl(
        \ex^{(\alpha,e)} \cdot
        \prod_{ \substack{ (j, k) \in \acute{J} \\ l > k/2 } }
        s_{j,k,l}^{m'_{j,k,l}}
    \biggr) \right\rangle
    \\  \nonumber
    &
    = \begin{cases}
        \displaystyle
        \prod_{ \substack{
            (j, k), (j', k') \in \acute{J} \\
            l > k/2, \ l' > k'/2, \\
            (j',k',l') \prec (j,k,l) \\
            k, k' \text{ odd}
        } } (-1)^{m_{j,k,l} \, m'_{j',k',l'}}
        \cdot
        \prod_{ \substack{ (j, k) \in J \\ l > k/2 } }
        m_{j,k,l}!, &
        \text{if } m_{j,k,l} = m'_{j,k,l} \text{ for all } j,k,l, \\
        0, &
        \text{otherwise}\,.
    \end{cases}
\end{align}
Here, the order of the products is determined by
the total order $\prec$
on the set of all $(j,k,l)$ with $(j,k) \in \acute{J}$ and $l>k/2$,
given by $(j',k',l') \prec (j,k,l)$
if $l'<l$, or $l'=l$ and $k'<k$, or
$l'=l$, $k'=k$ and $j'<j$.

Recall (from Theorem \ref{theorem vertex algebra structure}) that the shifted homology $\acute V_\bullet$ has a vertex algebra structure, with a translation operator $\acute{T}$ and a
state-field correspondence $\acute{Y}$.
Computing directly or using \cite[Equation 2.89]{Bu23}, 
we write the operators $\acute{T}$ and $\acute{Y}$ explicitly 
in terms of the classes $\ex^{(\alpha,e)} \cdot s_{j,k,l}$ 
as follows. 
Let $\alpha = (r, d) \in K(C)$, \ $\alpha' = (r', d') \in K (C)$
and $e,e'$ such that $(\alpha,e)>0$ and $(\alpha',e')>0$.
Define
$$
    \partial_{j,k,l} =
    \begin{cases}
        r, & (j, k, l) = (1, 0, 0), \\
        d, & (j, k, l) = (1, 2, 1), \\
        e, & (j, k, l) = ({+}, 0, 0), \\
        \frac{\p}{\p s_{j,k,l}}, & l > k/2,
    \end{cases}
$$
and also define $\p'_{j,k,l}$ similarly. Then
Let $p, p' \in \mb C[s_{j,k,l}:(j,k) \in \acute{J}, l>k/2]$.
\begin{align}
    \acute{T}(\ex^{(\alpha,e)} \cdot p) = 
    & \ex^{(\alpha,e)} \cdot 
    \left( rs_{1,0,1} + ds_{1,2,2} + e s_{+,0,1} + \sum_{(j,k) \in \acute{J}} s_{j,k,l+1} \p s_{j,k,l}
    \right) \, p
\end{align}
and
\begin{align}\label{equation description of Y}
    \nonumber
    \acute{Y}(\ex^{(\alpha,e)} \cdot p,z)\, \ex^{(\alpha',e')} & \cdot p' = 
     \ex^{(\alpha+\alpha',e+e')} \cdot (-1)^{\acute{\chi}((\alpha,e), (\alpha',e'))} \cdot 
    z^{\acute{\chi}_{\sym}((\alpha,e),(\alpha',e'))} \cdot
    \exp(z \acute{T} ) \circ \\
    & \exp \left[ - \sum_{(j,k) \in \acute{J}} (-1)^l ( l+l'-(k+k')/2 -1)! z^{-(l+l'-((k+k')/2))} \cdot \right.\\ 
    \nonumber
    & \quad \qquad \left(
            \acute{M}_{j,k}^{j',k'} +
            (-1)^{kk' + (k+k')/2} \, \acute{M}_{j',k'}^{j,k}
            \right) \p_{j,k,l}\p'_{j',k',l'} \Biggr]
    (p \cdot p') \vert_{s_{j,k,l}= s'_{j,k,l}} \,.
\end{align}
    We recall that the projection $\Pidash: \Mdash_{\alpha,e} \to \Mdash^\pl_{\alpha,e}$ induces an isomorphism 
    $$ \wh{H}_\bullet(\Mdash^\pl_{\alpha,e}) \cong \wh{H}_\bullet(\Mdash_{\alpha,e})/\im \acute{T}$$
    and there is a graded Lie algebra structure on the shifted homology
    $$ \wh{H}_\bullet(\Mdash^\pl_{>0}) =  \bigoplus_{(\alpha,e) >0} \wh{H}_\bullet(\Mdash^\pl_{\alpha,e})\,.$$
    The subspace $\wh{H}_0(\Mdash^\pl_{>0}) = \bigoplus_{(\alpha,e)>0} \wh{H}_0(\Mdash^\pl_{\alpha,e})$ 
    is an ordinary Lie algebra, where the class $[Q(E,\alpha)]_\inv$ lies.
With the notations above, we can now compute the invariant of the Quot scheme $Q_{\rk(E)-1,d}(E)$.
\begin{theorem}\label{theorem invariant quot N-1}
    Let $N =\rk(E)$ and $\nu = \chi(E)$.
    For any degree $d$, define $\du = d- \deg(E)$.
    The invariant 
    $$[Q_{\rk(E)-1,d}(E)]_\inv \quad  \in \quad \wh H_0(\Mdash_{(1,\du),1})/\im \acute T$$ is given by
    \begin{equation*}
    [Q_{\rk(E)-1,d}(E)]_\inv = 
    \res_{z=0} \left( \frac{1}{z^{\nu+N\du}} \cdot \rho(z) \cdot \sigma\left(\frac{N}{z} -s_{1,2,2}\right)\right) + \textnormal{im} \acute{T}\,,
    \end{equation*}
    where $\rho$ and $\sigma$ are defined as
\begin{align}\label{equation definition rho}
    \rho (z) & = \exp \biggl(
        \sum_{l=1}^\infty \frac{(-1)^l}{l!} \, z^l \, s_{{+},0,l}
    \biggr), \\
    \nonumber
    \sigma (x) & =
    \prod_{j=1}^g {} (x + s_{j,1,1} \, s_{j+g,1,1})\,.
\end{align}
\end{theorem}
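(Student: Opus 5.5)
We specialise the wall-crossing formula of Theorem \ref{theorem wall crossing for quot scheme} to $\alpha=(1,\du)$. Since every $\alpha_i=(r_i,d_i)$ has $r_i>0$ and $\sum r_i=1$, only the term $n=1$, $\alpha_1=\alpha$ survives. Hence
$$[\quot_{N-1,d}(E)]_\inv=-\bigl[\mathsf e^{((0,0),E^\vee)},\sfM_{(1,\du)}\bigr],$$
where the condition $\mu_{\min}(E^\vee)\leqslant \du$ is automatic whenever the Quot scheme is nonempty; otherwise both sides vanish. The computation then has two parts: identify $\sfM_{(1,\du)}$ explicitly, and evaluate the Lie bracket through \eqref{equation definition of bracket} and the formula \eqref{equation description of Y} for $\acute Y$.

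\textbf{Step 1: the invariant $\sfM_{(1,\du)}$.} In rank one every sheaf is stable, and $\M^{\ss}_{(1,\du)}=\mathrm{Pic}^{\du}(C)$ is a smooth $g$-dimensional torus. So $\sfM_{(1,\du)}$ is its fundamental class, pushed forward to $\wh H_0(\M^\pl)$. Following \cite[Theorem 4.3]{Bu23}, we lift this to a class in $\wh H_\bullet(\M_{(1,\du)})$ modulo $\im T$; a convenient lift is the pushforward along $\mathrm{Pic}\times B\mb G_m\to\M$ of $[\mathrm{Pic}]\boxtimes t^{\,0}$, suitably corrected. In the basis $\ex^{(\alpha,0)}\cdot s_{j,k,l}$ defined by the pairing \eqref{equation definition s_jkl by dual}, this should be $\ex^{((1,\du),0)}\prod_{j=1}^g s_{j,1,1}s_{j+g,1,1}$, up to sign and up to terms involving $s_{1,2,2}$ and $s_{1,0,l}$. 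To pin these terms down, we pair against monomials in the $S_{j,k,l}$ and compare with integrals over $\mathrm{Pic}$ of the Künneth components of the Poincaré bundle. Only $\ch_1$ is nonzero there, so $\mathrm{ch}(\mathcal P)=\exp(c_1)$ and the integrals $\int\theta^g/g!=1$ are standard. The class $\mathsf e^{((0,0),E^\vee)}$ is simply $\ex^{((0,0),1)}\cdot 1$.

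\textbf{Step 2: the bracket.} We apply \eqref{equation description of Y} with $(\alpha,e)=((0,0),1)$, $p=1$, $(\alpha',e')=((1,\du),0)$, and $p'$ the class from Step 1. By \eqref{equation formula for chi}, $\acute\chi=-\nu-N\du$ and $\acute\chi_{\sym}=-\nu-N\du$, which produces the prefactor $z^{-(\nu+N\du)}$. Since $p=1$, the derivatives $\p_{j,k,l}$ only produce the constants $\p_{+,0,0}=e=1$, while the $\p'$ act on $p'$ or give $r'=1$, $d'=\du$.

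The operator $\exp(z\acute T)$ applied to the first factor, with $e=1$ and $r=d=0$, gives $\exp(z\sum_l s_{+,0,l+1}\p_{s_{+,0,l}})$ acting on the vacuum. We expect this to produce $\rho(z)$. The key coefficients are $\acute M_{+,0}^{j',k'}=-\int\ch(E)\e_{j',k'}\td(C)$. For $(j',k')=(1,2)$ this coefficient is $-N$, and the corresponding term acts on $p'$ via $\partial/\partial s_{1,2,l'}$ with weight $z^{-l'}$. The odd-degree classes, paired with $\ch(E)$, only feed through $\e_{1,2}$. The net effect should be that $s_{j,1,1}s_{j+g,1,1}$ gets shifted, and the variable $s_{1,2,2}$ is replaced by $N/z-s_{1,2,2}$ inside the product. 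This is how $\sigma(N/z-s_{1,2,2})$ arises, after expanding $\prod_j(x+s_{j,1,1}s_{j+g,1,1})$ with $x$ tracking the $s_{1,2,2}$ powers. Finally we take $\res_{z=0}$ and reduce modulo $\im\acute T$.

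\textbf{Main obstacle.} The hard part is Step 1 together with the sign bookkeeping. We must choose a lift of $[\mathrm{Pic}]$ whose bracket computation is clean, and track the signs $(-1)^{\acute\chi}$, the odd-variable sign conventions, and the $(l+l'-(k+k')/2-1)!$ factors, so that the exponential collapses exactly to $\rho(z)\,\sigma(N/z-s_{1,2,2})$. We would first reproduce \cite[Theorem 4.3]{Bu23}, where $E$ is a line bundle, and check that replacing the line bundle by $E$ changes only the numerical constants: $1\mapsto N$ in front of $d'$, and $\chi(L)\mapsto\nu$. This check would confirm the final sign $-1$ and the shape of the formula.
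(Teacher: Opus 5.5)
Your reduction is the same as the paper's. In rank one the wall-crossing collapses to $[\quot_{N-1,d}(E)]_\inv=-\bigl[\mathsf e^{((0,0),E^\vee)},\sfM_{(1,\du)}\bigr]$, and what remains is an evaluation of \eqref{equation description of Y}. (The paper gets this from Proposition~\ref{proposition wall-crossing of pairs}, using $Q(E,(1,\du))=P(E,(1,\du))$, which carries no $\mu_{\min}$ condition. Your ``otherwise both sides vanish'' is true, but it needs the remark that $\du<\mu_{\min}(E^\vee)$ forces the expected dimension to be negative.)

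The genuine gap is that you never determine the class you are bracketing. The stated closed form depends on using the specific representative $p'=\sigma(-s_{1,2,2})=\prod_{j=1}^g(s_{j,1,1}s_{j+g,1,1}-s_{1,2,2})$. This representative contains no $s_{1,0,l}$, no $s_{1,2,l}$ with $l\geqslant 3$, and no $s_{j,1,l}$ with $l\geqslant 2$. You leave exactly these terms open (``up to sign and up to terms involving $s_{1,2,2}$ and $s_{1,0,l}$''), and they are not harmless. Any $s_{1,0,l}$-dependence would be hit by the cross term with coefficient $\acute M_{+,0}^{1,0}=-\chi(E)$, and higher $s_{1,2,l}$ would pick up further $N$-dependent shifts. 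So nothing in your argument forces the output to be $\rho(z)\,\sigma(N/z-s_{1,2,2})$.

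The paper takes this input from \cite[Theorem 4.2]{Bu23}, namely $\xi(\sfM_{(1,\du)})=\ex^{(1,\du)}\cdot\sigma(-s_{1,2,2})$. It then observes that Bu's normal-form map $\xi$ involves only $T$ and $\p/\p s_{1,0,1}$, so it is the identity on polynomials in the $s_{j,1,1}$ and $s_{1,2,2}$. This gives $\sfM_{(1,\du)}=\ex^{((1,\du),0)}\cdot\sigma(-s_{1,2,2})+\im\acute T$. Your suggested route could reproduce this: use a Poincar\'e bundle on $\Pic$ normalized to be trivial along a point of $C$, then dualize through \eqref{equation definition s_jkl by dual}. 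But it has to be carried out; as written, the factor $\sigma$ is read off from the target.

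Step~2 also contains concrete errors that would give the wrong answer if executed literally.

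First, $\exp\bigl(z\sum_l s_{+,0,l+1}\,\p/\p s_{+,0,l}\bigr)$ applied to $1$ is just $1$. The factor $\rho$ comes from the multiplicative term $e\,s_{+,0,1}$ of $\acute T$: indeed $\exp\bigl(-z(s_{+,0,1}+\sum_l s_{+,0,l+1}\,\p/\p s_{+,0,l})\bigr)\cdot 1=\rho(z)$.

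Second, the signs. The paper writes $-[\mathsf e^{((0,0),E^\vee)},\sfM_{(1,\du)}]=\res_{z=0}\acute Y(\mathsf e^{((0,0),E^\vee)},-z)\sfM_{(1,\du)}$. With $-z$, the prefactor $(-1)^{\acute\chi}(-z)^{\acute\chi_{\sym}}$ is exactly $z^{-(\nu+N\du)}$; you dropped the $(-1)^{\acute\chi}$. The translation then gives $\rho(z)$ rather than the $\rho(-z)$ that $\exp(+z\acute T)$ produces.

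Third, the $(+,0,0)$--$(1,2,l')$ cross term carries $z^{-(l'-1)}$, not $z^{-l'}$. The only such term acting nontrivially on $\sigma(-s_{1,2,2})$ is $l'=2$, with factorial $0!=1$. It yields $\exp\bigl(-\tfrac{N}{z}\,\p/\p s_{1,2,2}\bigr)\sigma(-s_{1,2,2})=\sigma(N/z-s_{1,2,2})$.

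Fourth, $\acute M_{+,0}^{j,1}=0$ and $\acute M_{j',k'}^{+,0}=0$ for $(j',k')\in J$, so the $s_{j,1,1}$ are not shifted at all. Your statement that $s_{j,1,1}s_{j+g,1,1}$ ``gets shifted'' is wrong; only $s_{1,2,2}$ moves.
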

\begin{proof}
Note that in the rank $(N-1)$ case, we have
$$ Q_{N-1,d}(E) \cong Q(E,(1,\du)) = P(E,(1,\du))\,.$$
Using the wall-crossing formula from Proposition \eqref{proposition wall-crossing of pairs},
we have 
$$ [P(E,1,\du)]_\inv = - \left[ \mathsf e^{((0,0),E^\vee)}, \sfM_{(1,\du)} \right] \,.$$
We use the linear isomorphism from \cite[Equation 2.106]{Bu23},
$$ \xi : H_\bullet(\M_\alpha^\pl) \cong \ex^{\alpha} \cdot \mb C\left[s_{j,k,l} : \substack{(j,k)\in J,\\ l>k/2} \right]/\im T  \longrightarrow \ex^{\alpha} \cdot 
\mb C\left[s_{j,k,l} : \substack{(j,k)\in J,\ l>k/2, \\ (j,k,l) \neq (1,0,1)} \right] $$
given by 
$${\xi}(\ex^{\alpha} \cdot p + \im T) = \ex^{\alpha} \cdot \left[ \sum_{i\geqslant 0} \frac{1}{i!(-r)^i} {T}^i \circ \left( \frac{\p}{\p s_{1,0,1}} \right)^i \right] p\,,$$
where $r = \rk(\alpha)$ and $T$ is the restriction of 
$\acute T$ to the subspace $\wh H_\bullet (\M_{\alpha})$.
From \cite[Theorem 4.2]{Bu23}, we have the fundamental class
$$ \xi( \sfM_{(1,\du)} ) = \ex^{(1,\du)} \cdot \sigma(-s_{1,2,2})\,.$$
As $\sigma(-s_{1,2,2})$ contains only the classes $s_{j,1,1}$ and $s_{1,2,2}$, and $\xi$ has no effect on these classes, we can write
$$\sfM_{(1,\du)}  =  \ex^{(1,\du)} \cdot \sigma(-s_{1,2,2}) + \im T 
\qquad \text{ in } H_\bullet(\M_{(1,\du)})/\im T
\,.$$
Using the inclusion $\iota^\pl : \M^\pl \to \Mdash^\pl$,
we can write
$$\sfM_{(1,\du)}  =  \ex^{((1,\du),0)} \cdot \sigma(-s_{1,2,2}) + \im \acute{T}
\qquad \text{ in } H_\bullet(\Mdash_{(1,\du),0})/\im \acute{T}
\,.$$
Now we compute the bracket $\left[ \mathsf e^{((0,0),E^\vee)}, \sfM_{(1,\du)} \right] $.
We recall that the bracket is defined using the operator $\acute{Y}$ of the vertex algebra $\acute{V_\bullet}$ (Theorem \ref{theorem vertex algebra structure}) as
$$-\left[ \mathsf e^{((0,0),E^\vee)}, \sfM_{(1,\du)} \right] 
= \res_{z=0} \left(\acute{Y} \left(\mathsf e^{((0,0),E^\vee)}, -z\right) \left(\sfM_{(1,\du)}\right) \right) \,. $$
We use the description of $\acute{Y}$ given in \eqref{equation description of Y}.
In our case, we have $\alpha=(0,0)$, $e=1$, $p=1$ and $\alpha'=(1,\du)$, $e'=0$, $p'= \sigma(-s_{1,2,2})$.
And we have 
$$\acute{\chi}_{\sym}((\alpha,e),(\alpha',e')) =
\acute{\chi}_{\sym}(((0,0),1),((1,\du),0)) = 
-\chi(E)-\rk(E)\du = -(\nu+N\du) \,.$$
Substituting these values, we get the following
\begin{align*}
        & \acute{Y} (\mathsf e^{((0,0),E^\vee)}, -z) \left(\sfM_{(1,\du)}\right) \\
        & = {}
        \ex^{((1,\du),1)} \cdot \frac{1}{z^{\nu+N\du}}
        \exp \biggl[ -z \biggl(s_{{+},0,1} +
            \sum_{l=1}^{\infty} s_{{+},0,l+1} \,
            \frac{\p}{\p s_{+,0,l}}
        \biggr) \biggr] 
        \, \exp \biggl( -\frac{N}{z} \, \frac{\p}{\p s_{1,2,2}} \biggr) 
        \sigma (-s_{1,2,2}) \\
        & = {}
        \ex^{((1,\du),1)} \cdot \frac{1}{z^{\nu+N\du}} \
        \rho (z) \ \sigma \Bigl( \frac{N}{z} - s_{1,2,2} \Bigr),
    \end{align*}
    where the following two identities are used.
    For any polynomial $f$ not involving the variables $s_{{+},0,l}$,
    \begin{equation*}
        \exp \biggl[ -z \biggl(
            s_{{+},0,1} +
            \sum_{l=1}^{\infty} s_{{+},0,l+1} \,
            \frac{\p}{\p s_{{+},0,l}}
        \biggr) \biggr] \ f =
        \rho (z) \, f
    \end{equation*}
    and for formal variables $a, x$ of degree $2$,
    \begin{equation*}
        \exp \Bigl( a \, \frac{\p}{\p x} \Bigr)
        \, \sigma (x) = \sigma (a + x)
    \end{equation*}
\end{proof}

\section{Virasoro Constraints on Quot scheme}\label{section virasoro constraints}
    In this section, we will formulate and prove Virasoro constraints for Quot schemes using the Joyce invariant and the wall-crossing formula.
    We start with defining descendent integrals and Virasoro constraints.
    We refer the reader to \cite[Section 2.1]{BLM24} for the definition of supercommutative algebras.
\begin{definition}[Descendent Algebra]\label{definition descendent algebra}
Let $\CH^C$ denote the infinite dimensional vector space over $\C$  generated by symbols called \textit{holomorphic descendents} of the form
$$\chh_l(\gamma)\quad\textup{ for }\quad l\geqslant0,\, \gamma\in H^\bullet(C)$$
subject to the linearity relations
$$\chh_l(\lambda_1\gamma_1+\lambda_2\gamma_2)=\lambda_1\chh_l(\gamma_1)+\lambda_2\chh_l(\gamma_2)$$
for $\lambda_1, \lambda_2\in \C$. 
We define the \textit{cohomological} $\Z$-grading on $\CH^C$ by 
\begin{equation}\label{equation degree chigamma}
\deg\chh_l(\gamma) = 2l-p+q \quad \text{ for }\quad \gamma\in H^{p,q}(C)\,.
\end{equation}
Finally, we define $\D^C$ be the $\Z$-graded \textit{algebra of holomorphic descendents}
\[\D^C = \SSym\llbracket\text{CH}^C\rrbracket\,,\]
which is the completion of the supercommutative algebra generated by $\chh_l(\gamma)$. 
\end{definition}

    For any $(r,d) \in K(C)$ with $0<r<\rk(E)$, consider the Quot scheme $\quot_{r,d}(E)$. 
    Let $p_C : C \times \quot_{r,d}(E) \to C$
    and $p_Q : C \times \quot_{r,d}(E) \to \quot_{r,d}(E)$ 
    denote the projection maps.
    For any sheaf $\G$ on $C \times \quot_{r,d}(E)$,
    the geometric realization map with respect to $\G$ is 
    defined to be the algebra homomorphism
    $$ \xi_{\G} : \D^C \to H^\bullet (\quot_{r,d}(E))\,,$$
    defined on the generators as 
    $$ \xi_\G(\chh_l(\gamma)) = {p_Q}_*(\ch_{l+1-p}(\G)\,(p_C^*\gamma)) \quad \quad \text{for } \gamma \in H^{p,q}(C)\,.$$
    Following \cite{BLM24}, we have the shift in the index of the Chern character using the Hodge degree of $C$.
    This will be useful for a cleaner formulation of the Virasoro operators.
    The class $\xi_\G(\chh_l(\gamma))$ can be thought of as a class in $H^{l,l-p+q,}(\quot_{r,d}(E))$.
    Though the scheme $\quot_{r,d}(E)$ might be singular 
    and a Hodge decomposition may not exist.
    Let $\K$ denote the universal kernel on $C \times \quot_{r,d}(E)$.
    We want to study the tautological descendent integrals 
    $$ \int_{[\quot_{r,d}(E)]^\vir} \xi_\kv(D) \quad \textnormal{for} \quad  D \in \D^C\, ,$$
    which are enumerative invariants 
    of the Quot scheme.

\begin{definition}[Virasoro operators]\label{definition virasoro operators}
    We define the Virasoro operator
    $$ L_m : \D^C \to \D^C \quad \text{ for } m\geqslant 0\,, $$
    as a sum of operators $R_m$ and $T_m$, where
    \begin{itemize}
        \item $R_m$ is a derivation defined on the generators by
        \begin{equation*}
            R_m(\chh_l(\gamma)) := \left( \prod_{i=0}^m (i+l) \right) \chh_{l+m}(\gamma)\,,
        \end{equation*} 
        \item $T_m$ is multiplication by the following element of $\D^C$,
        \begin{equation*}
            T_m := \left( \sum_{i+j=m} i!j!(1-g)\chh_i(\eta)\chh_j(\eta) \right) - m!\chh_m \left(\ch(E)\,\td(C) \right)\,.
        \end{equation*}
    \end{itemize}
\end{definition}

\begin{remark}
    As observed in \cite{BLM24}, the linear part of the Virasoro operators, i.e., $T_m$ is closely related to the obstruction theory of the moduli space we consider.
    The similarity between obstruction theory and Virasoro constraints below is a general phenomenon which can be used to guess the correct formulation of the constraints.
    Recall that the deformation-obstruction theory of the Quot scheme can be seen as
    $$T^\vir \quot_{r,d}(E) =  R{(p_{Q})}_* R\HOM
    \big([p_C^*E^\vee \to \K^\vee], \K^\vee\big) \,. $$
    The Chern character of the virtual tangent bundle can be expressed in terms of descendents using Grothendieck-Riemann-Roch.
    Let $\Delta: C \to C \times C$ denote the diagonal embedding
    and let 
    $$\Delta_*(\td(C))= \sum_t \gamma_t^R \otimes \gamma_t^R $$ 
    be the Kunneth decomposition such that $\gamma_t^L \in H^{p_t^L,q_t^L}(C)$ for some $p_t^L,q_t^L$. 
    Then
    \begin{align*}
    \ch(T^\vir \quot_{r,d}(E)) = - \xi_{\K^\vee}
    \left( \sum_{i,j,t} (-1)^{i+1-p_t^L}\chh_i(\gamma_t^L) \, \chh_j(\gamma_t^R)
    - \sum_{m} \chh_m(\ch(E) \cup \td(C)) \right) \,.
    \end{align*}
    We can define $T_m$ as
    $$\left( \sum_{i+j=m} \sum_t (-1)^{1-p_t^L} \, i!j! \chh_i(\gamma_t^L) \, \chh_j(\gamma_t^R) \right)
    - m!\,\chh_m(\ch(E) \cup \td(C))\,.$$
    It is easy to see that this expression simplifies to the expression given in the definition \ref{definition virasoro operators}.
\end{remark}
    As noted in \cite{BLM24}, the operators 
    $\{L_m\}_{m\geqslant-1}$ satisfy the Virasoro bracket relations
    $$ [L_m,L_n] = (m-n) L_{m+n} \,, $$
    where the bracket denotes the usual commutator of operators
    in $\End(\D^C)$.
    
    \begin{definition}[Virasoro constraints]\label{Conjecture Virasoro quot}
        The Quot scheme $\quot_{r,d}(E)$ is said to satisfy
        the Virasoro constraints if we have
        $$\int_{[\quot_{r,d}(E)]^\vir} \xi_{\kv} (L_m(D)) = 0 \quad \text{ for any } m\geqslant 0, D \in \D^C\,.$$
    \end{definition}
We will formulate the Pair Virasoro constraints which will be used to prove the Virasoro constraints for Quot schemes.
Let \[\D^\pa=\D^C\otimes\D^C\]
be the algebra of pair descendents. We denote the generators of the first copy of $\D^C$ by  $\chhE_l(\gamma)$ and 
the generators of the second copy by  $\chhF_l(\gamma)$. 
Given the universal kernel $\K$ on $C\times \quot_{r,d}(E)$, we have a geometric realization map
\[\xi_{(E,\kv)}: \D^\pa\to H^\bullet(\quot_{r,d}(E))\,,\]
defined by
\begin{align*}
\xi_{(E,\kv)}\left(\chhE_l(\gamma)\right)&=\xi_{p_C^* E^\vee}\left(\chh_l(\gamma)\right)=\begin{cases}\int_C \gamma\cdot \ch(E^\vee)&\textup{ if }l=0\,,\\
0 & \textup{ otherwise}\,,\end{cases}\\
\xi_{(E,\kv)}\left(\chhF_l(\gamma)\right)&=\xi_{\kv}\left(\chh_l(\gamma)\right).
\end{align*}
This geometric realization map factors through $\xi_\kv$, i.e.
we have the commutative diagram,
\begin{center}
\begin{tikzcd}
\D^\pa\arrow[d, swap, "\upsilon_E"] \arrow[rd,"\xi_{(E,\kv)}"]& \\
\D^C\arrow[r, "\xi_\kv"]& H^\bullet(\quot_{r,d}(E))
\end{tikzcd}
\end{center}
where $\upsilon_E$ is defined on the generators as
$$\upsilon_E(\chhF_l(\gamma)) = \chh_l(\gamma) \quad \textup{ and }
\quad 
\upsilon_E(\chhE_l(\gamma)) = \begin{cases}\int_C \gamma\cdot \ch(E^\vee)&\textup{ if }l=0\,,\\
0 & \textup{ otherwise}\,.\end{cases}$$
We define the pair Virasoro operator $L_m^{\pa}\colon \D^\pa\to \D^\pa$, for $m\geqslant 0$, as the sum $R_m^{\pa}+T_m^{\pa}$ where
\begin{itemize}
\item $R_m^{\pa}$ is a derivation defined on generators as
\[R_m^{\pa}=R_m\otimes \id+\id\otimes R_m\,,\]
\item $T_m^{\pa}$ is multiplication by an element $T_m^{\pa}$, which is defined as follows.
Let $\Delta: C \to C \times C$ denote the diagonal embedding
and let $$ \sum_t \gamma_t^R \otimes \gamma_t^R = \Delta_*(\td(C))$$
be the Kunneth decomposition such that $\gamma_t^L \in H^{p_t^L,q_t^L}(C)$ for some $p_t^L,q_t^L$. 
Define $$T_m^{\pa}:=\sum_{i+j=m} i!j!
\sum_t (-1)^{1- p_t^L}(\chhF_i - \chhE_i)(\gamma_t^L) \cdot \chhF_j(\gamma_t^R)\in \D^\pa \,.$$
\end{itemize}
The operators $L_m$ and $L_m^\pa$ are related by the following commutative diagram
\begin{equation}\label{diagram virasoro and pair}
\begin{tikzcd}
\D^\pa\arrow[d, swap, "\upsilon_E"] \arrow[r,"L_m^\pa"]& \D^\pa \arrow[d, "\upsilon_E"]\\
\D^C\arrow[r, "L_m"]& \D^C \,.
\end{tikzcd}
\end{equation}
To check the commutativity of the diagram, it is enough to see that
$$ \upsilon_E(T_m^\pa) = T_m$$
which follows from the identity
\begin{align*}
\sum_{t}(-1)^{1-p_t^L}\left(\int_C \gamma_t^L \cdot \ch(E^\vee) \right)\chh_m(\gamma_t^R)&=\sum_t \left(\int_C \gamma_t^L \cdot \ch(E) \right)\chh_m(\gamma_t^R)
=\chh_m(\ch(E) \td(C)).
\end{align*}
We can define the pair version of Virasoro constraints similarly.
The Quot scheme $\quot_{r,d}(E)$ is said to satisfy
        the pair Virasoro constraints if we have
        \begin{equation}\label{Conjecture Virasoro pair}
            \int_{[\quot_{r,d}(E)]^\vir} \xi_{(E,\kv)} (L^\pa_m(D)) = 0 \quad \text{ for any } m \geqslant 0, D \in \D^\pa\,.
        \end{equation}
Due to the commutative diagram \eqref{diagram virasoro and pair}, it follows that the Virasoro constraints on the Quot scheme
are implied by the pair Virasoro constraints.
So it will be sufficient to prove the pair Virasoro constraints for Quot schemes.
    
    Now we will reformulate the pair Virasoro constraints in the language of vertex algebra structure, so that we can use the Joyce's invariants and wall-crossing formula.
    To prove the pair Virasoro constraints, it is suitable to work with a larger vertex algebra $V^\pa_{\bullet}$, which is constructed from the homology of the stack parametrizing pair of complexes.
    Recall the (higher) moduli stack $\M$ of perfect complexes on $C$.
    There is a universal perfect complex $\W$ on $C \times \M$.
    Let 
    $$\P:= \M \times \M$$
    be the (higher) moduli stack parametrizing pair of perfect complexes on $C$
    and let $\E$ and $\mb F$ denote the pullbacks of the universal complex $\W$ from the first and second factor respectively on $C \times \P$.
    As in section \ref{section Vertex Algebra Structure and Invariants}, we will define the pair vertex algebra from homology of the stack $\P$.
    For this, we define the complex $\EXT^\pa$ on $\P \times \P$ as the following.
    Let $\pi_{ij}$ denote the projection from 
    $C \times \P \times \P$ to the $i$-th and $j$-th factors
    and let $q_2,q_3$ denote the projections from $\P \times \P$ to its factors. 
    Following \cite[Definition 4.4]{BLM24}, 
    we define $\EXT^\pa$ to be the perfect complex 
    \begin{align}\label{equation definition ext^pa}
        \EXT^\pa := (\pi_{23})_*\biggl(\pi_{12}^*(\F \oplus \E[1])^\vee \otimes \pi_{13}^*\F \biggr)
    \end{align}
    and 
    $$ \mc E ^\pa := (\EXT^\pa)^\vee$$
    on $\P \times \P$. 
    For any $\alpha^\pa = (\alpha_1,\alpha_2) \in K(C)^{\oplus 2}$,
    let $\P_{\alpha^\pa}$ denote the connected component $\M_{\alpha_1} \times \M_{\alpha_2}$.
    The stack $\P$ can be decomposed as 
    $$ \P= \bigsqcup_{\alpha^\pa \in K(C)^{\oplus 2}} \P_{\alpha^\pa} \,.$$
    For $\alpha^\pa, \beta^\pa \in K(C)^{\oplus 2}$,
    we define the pairing 
    $$\chi^\pa(\alpha^\pa,\beta^\pa) = \rank(\EXT^\pa_{\alpha^\pa,\beta^\pa})\,.$$
    For any $\alpha^\pa \in K(C)^{\oplus 2}$, 
    define the graded vector space of shifted homology
    $$ V^\pa_{\bullet, \alpha^\pa} := 
    H_{\bullet-2 \chi^\pa(\alpha^\pa,\alpha^\pa)}(\P_{\alpha^\pa})\,.$$
    The pair vertex algebra has the underlying graded vector space $V_\bullet^\pa$, which is defined as
    $$ V_\bullet^\pa := \bigoplus_{\alpha^\pa} V^\pa_{\bullet, \alpha^\pa}\,.$$
    The vertex algebra structure on $V_\bullet^\pa$
    is constructed exactly as in Theorem \ref{theorem vertex algebra structure}, with $\Mdash$ replaced by $\P$, $\acute{\mc E}$ by $\mc E^\pa$ and $\acute{\chi}$ by $\chi^\pa$.

Let $\alpha \in K(C)$.
There is an algebra homomorphism
$\xi_\alpha : \D^C \to H^\bullet(\M_\alpha)$
defined by
$$ \xi_\alpha(\chh_l(\gamma)) = 
\begin{cases}
    \ch_{l+1-p}(\W_\alpha) \setminus \gamma^\vee \quad & 
    \text{ if } \deg \chh_l(\gamma)>0\,,\\
    \int_{C} \gamma \cdot \ch(\alpha) & 
    \text{ if } \deg \chh_l(\gamma)=0\,, \\
    0 &\text{ otherwise}\,,
\end{cases}$$
where $\gamma^\vee \in H_\bullet(C)$ is the the dual of $\gamma \in H^{p,q}(C)$.
Recall the classes $S^\alpha_{j,k,l} \in H^\bullet(\M_\alpha)$ defined in \eqref{equation definition S_j,k,l classes}.
Clearly, these classes are realized by $\xi_\alpha$.
Moreover, \cite[Theorem 4.15]{gros20} says 
that $H^\bullet(\M_\alpha)$ is generated by the classes
$S^\alpha_{j,k,l}$ as a graded algebra.
This implies that $\xi_\alpha$ is surjective.
We do the same thing for the pair version.
For $\alpha^\pa = (\alpha_1,\alpha_2)$, we define
the algebra homomorphism
$$\xi_{\alpha^\pa} := \xi_{\alpha_1} \otimes \xi_{\alpha_2}: 
\D^\pa \longrightarrow H^\bullet(\P_{\alpha^\pa})\,.$$
The map $\xi_{\alpha^\pa}$ is surjective.
Also note that $\ker(\xi_\alpha)$ is generated by the symbols of the form
$\chh_0(\gamma)$.
As $R_m(\chh_0(\gamma))=0$,
it follows that $\xi_{\alpha^\pa}(L^\pa_m(\ker(\xi_{\alpha^\pa})))=0$.
So the Virasoro operators $L^\pa_m$ factor through the quotient 
$H^\bullet(\P_{\alpha^\pa})$.
In other words, we have operators $\mL^\pa_m$ on the cohomology ring $H^\bullet(\P_{\alpha^\pa})$ satisfying the following commutative diagram.
$$
    \begin{tikzcd}
    \D^\pa \arrow[r,"L^\pa_m"] \arrow[d,"\xi_{\alpha^\pa}"] & \D^\pa \arrow[d,"\xi_{\alpha^\pa}"]\\   
    H^\bullet(\P_{\alpha^\pa}) \arrow[r,"\mL^\pa_m"]& H^\bullet(\P_{\alpha^\pa}) \,.
\end{tikzcd}
$$
\begin{theorem}\label{theorem virasoro conformal}
     For each $m \geqslant 0$, there exist an operator $\L_m^\pa $ on the shifted homology $V^\pa_\bullet$ such that 
    the operators $\mL_m^\pa$ and $\L_m^\pa$ are dual to each other with respect to the perfect pairing 
    $$H^\bullet(\P_{\alpha^\pa}) \otimes V^\pa_{\bullet,\alpha^\pa} \to \C \,.$$
\end{theorem}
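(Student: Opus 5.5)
The operator $\L^\pa_m$ is defined by duality, and the substance is to show that it is well defined and, in the form needed later, explicit. On each component $\P_{\alpha^\pa}=\M_{\alpha_1}\times\M_{\alpha_2}$ the cohomology $H^\bullet(\P_{\alpha^\pa})$ is, by \cite[Theorem 4.15]{gros20} and the K\"unneth formula, a free supercommutative algebra on the classes $S^{\alpha_1}_{j,k,l}\boxtimes 1$ and $1\boxtimes S^{\alpha_2}_{j,k,l}$. It is finite-dimensional in each cohomological degree. Dually, $V^\pa_{\bullet,\alpha^\pa}$ is identified with $\ex^{\alpha^\pa}\cdot\C[s^{E}_{j,k,l},s^{F}_{j,k,l}]$ through a pairing of the form \eqref{equation definition s_jkl by dual}. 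Since the pairing is perfect degree by degree, any graded endomorphism of $H^\bullet(\P_{\alpha^\pa})$ that shifts degree by a fixed amount has a unique transpose on the homology. We set $\L^\pa_m:=(\mL^\pa_m)^t$ componentwise and sum over $\alpha^\pa$.

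The key steps, in order, are as follows.

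\textbf{Step 1: $\mL^\pa_m$ is well defined and homogeneous.} We have already observed that $\ker\xi_{\alpha^\pa}$ is generated by the degree-zero symbols $\chh_0(\gamma)$, that $R^\pa_m$ kills these, and that $T^\pa_m$ acts by multiplication. Hence $L^\pa_m(\ker\xi_{\alpha^\pa})\subset\ker\xi_{\alpha^\pa}$, so $\mL^\pa_m$ descends. From \eqref{equation degree chigamma}, $R_m$ raises the cohomological degree by $2m$, and $T^\pa_m$ is homogeneous of degree $2m$, since $i+j=m$ and the Künneth components of $\Delta_*\td(C)$ have complementary degrees. So $\mL^\pa_m$ maps $H^{n}$ to $H^{n+2m}$ on each component.

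\textbf{Step 2: local finiteness of the pairing.} I would check that the pairing pairs $H^n(\P_{\alpha^\pa})$ perfectly with $H_n(\P_{\alpha^\pa})$, which after the shift is the appropriate graded piece of $V^\pa_{\bullet,\alpha^\pa}$. Each graded piece is finite-dimensional because all generators $S_{j,k,l}$ with $l>k/2$ have positive degree. The transpose then exists degree by degree as a map $V^\pa_{n+2m}\to V^\pa_{n}$, with the shift bookkeeping being that of $\wh H_\bullet$.

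\textbf{Step 3: explicit form of the dual.} To make the operator usable in the compatibility arguments that follow, I would compute $\L^\pa_m$ in the coordinates $s_{j,k,l}$. The multiplication by the quadratic element $T^\pa_m$ dualises to a second-order differential operator $\sum \p\,\p$ in the variables $s^{E},s^{F}$, with the constant terms given by $\ch(\alpha)$ coming from $\chh_0$ and $\chh_1(\e_{1,2})$. The derivation $R^\pa_m$ sends $S_{j,k,l}$ to a multiple of $S_{j,k,l+m}$, so it dualises to the operator $\sum c\, s_{j,k,l}\,\p/\p s_{j,k,l+m}$ plus lower-order corrections.

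I expect the main obstacle to be the sign bookkeeping in this last computation. The odd classes with $k=1$ make the pairing \eqref{equation definition s_jkl by dual} carry Koszul signs, and these must be tracked to confirm that the transpose of a derivation is indeed the stated first-order operator. For the bare existence claim in the theorem, however, Steps 1–2 already suffice.
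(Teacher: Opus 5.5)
Your argument is correct for the statement as literally worded, but it follows a different route from the paper.

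You define $\L^\pa_m$ as the transpose of $\mL^\pa_m$. This transpose exists and is unique: $H^\bullet(\P_{\alpha^\pa})$ is free on generators of positive degree, so every graded piece is finite-dimensional and the pairing over $\C$ is perfect degree by degree, while $\mL^\pa_m$ is homogeneous of degree $2m$.

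The paper instead takes $\L^\pa_m$ to be the vertex Virasoro operators of the conformal element $\omega$ that \cite[Section 4.3]{BLM24} constructs on the lattice vertex algebra $\C[\Lambda^\pa_{\textnormal{sst}}]\otimes\D_{\Lambda^\pa}\cong V^\pa_\bullet$. The duality with $\mL^\pa_m$ is then genuine input, namely \cite[Theorem 4.12]{BLM24}, combined with the identification of the abstract and topological cap products.

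Your version is self-contained, but it makes the theorem a tautology and discards what the paper needs later. Three later steps use that $\L^\pa_m$ are the modes of $Y^\pa(\omega,z)$, so that the commutator formulas with the fields $Y^\pa(u,z)$ hold:
\begin{enumerate}
\item the notion of primary state;
\item the closure $\ad:\widecheck{\PS}\times\PS\to\PS$ of Proposition \ref{proposition lifting bracket}, taken from \cite[Proposition 3.13]{BLM24};
\item the fact that the $\sfM_\alpha$ are primary, \cite[Theorem 5.12]{BLM24}.
\end{enumerate}
By uniqueness of the transpose, your operator agrees with the paper's, but only once \cite[Theorem 4.12]{BLM24} is known. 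To get there along your route, you would have to finish Step 3, signs included, and match the resulting differential operator with the lattice formula for the modes of $\omega$. That matching is exactly the content of \cite[Theorem 4.12]{BLM24}.
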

\begin{proof}
This is mainly proved in \cite[Section 4.3]{BLM24},
using a lattice vertex algebra structure on $V_\bullet^\pa$.
All the objects here are exactly the same as in \cite[Section 4.3]{BLM24}.
So we will omit the details of the construction of the lattice vertex algebra structure and only state the results.
It is shown in \cite[Theorem 4.7]{BLM24} that the vertex algebra
$V^\pa_\bullet$ is isomorphic to a lattice vertex algebra
$\C[\Lambda^\pa_{\text{sst}}] \otimes \D_{\Lambda^\pa}$.
In \cite[Lemma 4.8]{BLM24}, it is shown that
$H^\bullet(\P_{\alpha^\pa})$ is isomorphic to a graded algebra $\D^{\pa}_{\alpha^\pa}$.
An abstract cap product 
$$ \cap : \D^\pa_{\alpha^\pa} \times \D_{\Lambda^\pa} \to \D_{\Lambda^\pa}$$
can be constructed as in \cite[Equation (40)]{BLM24}.
Moreover, the commutative diagram \cite[Diagram (41)]{BLM24}
implies that this abstract cap product coincides with the topological cap product 
$$ \cap : H^\bullet(\P_{\alpha^\pa}) \times H_\bullet(\P_{\alpha^\pa}) \to H_\bullet(\P_{\alpha^\pa})\,,$$
via the isomorphisms mentioned above.
Using the lattice vertex algebra structure, \cite[Section 4.3]{BLM24} defines a conformal element $\omega$ in $V^\pa_\bullet$,
which gives rise to the vertex Virasoro operators $\L_m^\pa : V^\pa_\bullet \to V^\pa _\bullet$ for $m \geqslant 0$.
Finally, \cite[Theorem 4.12]{BLM24} shows that the operators 
$\mL_m^\pa$ and $\L_m^\pa$ are dual to each other with respect to the perfect pairing coming from the topological cap product.
\end{proof}
We define the space of primary states in the pair vertex algebra
$V_\bullet^\pa$ as
$$ \PS := \{a \in V_\bullet^\pa : \L_m^\pa(a) = 0 \text{ for all } m \geqslant 0\}\,.$$

Let us consider the Quot scheme $\quot_{r,d}(E)$.
Fix $\alpha = (\rk(E)-r,d-\deg(E))$ and $\alpha_{\pa} = (\alpha(E),\alpha)$.
The sheaves $p_C^*E^\vee$ and $\kv$ on $C \times \quot_{r,d}(E)$ induce a map
$$\mf g : \quot_{r,d}(E) \to \P_{\alpha^\pa}$$
such that $(\id_C \times \mf g)^*\E = p_C^*E^\vee$ 
and $(\id_C \times \mf g)^*\F = \kv$.
This map sends a closed point $[\vp: E \to Q]$ to the pair $(E^\vee,(\ker\vp)^\vee)$.
Let us consider the pushforward class
$$\mathsf{Quot}_{r,d}(E) :={\mathfrak g}_*([\quot_{r,d}(E)]^\vir)  \in H_\bullet (\P_{\alpha^\pa})\,.$$

\begin{lemma}\label{Lemma virasoro primary states}
    The Virasoro constraints, i.e., the condition \eqref{Conjecture Virasoro quot} is equivalent to the following condition
    \begin{equation*}\label{condition virasoro primary states}
        \mathsf{Quot}_{r,d}(E) \in \PS\,. 
    \end{equation*}
\end{lemma}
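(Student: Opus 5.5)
We prove the equivalence by dualizing. By Theorem \ref{theorem virasoro conformal}, for every $m\geqslant 0$ the operator $\L^\pa_m$ on $V^\pa_{\bullet,\alpha^\pa}$ is the adjoint of $\mL^\pa_m$ on $H^\bullet(\P_{\alpha^\pa})$ under the perfect pairing. Hence $\L^\pa_m(\mathsf{Quot}_{r,d}(E))=0$ if and only if
$$\big\langle c,\ \L^\pa_m(\mathsf{Quot}_{r,d}(E))\big\rangle=\big\langle \mL^\pa_m(c),\ \mathsf{Quot}_{r,d}(E)\big\rangle=0 \qquad \text{for all } c\in H^\bullet(\P_{\alpha^\pa}).$$
Since $\xi_{\alpha^\pa}$ is surjective, we may write $c=\xi_{\alpha^\pa}(D)$ with $D\in\D^\pa$. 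The defining square of $\mL^\pa_m$ then gives $\mL^\pa_m(c)=\xi_{\alpha^\pa}(L^\pa_m(D))$. By the projection formula, the pairing becomes
$$\int_{[\quot_{r,d}(E)]^\vir}\mf g^*\xi_{\alpha^\pa}(L^\pa_m(D)).$$

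Next we check that $\mf g^*\circ\xi_{\alpha^\pa}=\xi_{(E,\kv)}$ on generators. Because $(\id_C\times\mf g)^*\E=p_C^*E^\vee$ and $(\id_C\times\mf g)^*\F=\kv$, base change of the slant product gives $\mf g^*(\ch_{l+1-p}(\W_{\alpha_i})\setminus\gamma^\vee)=p_{Q*}(\ch_{l+1-p}(\cdot)\,p_C^*\gamma)$. Two remaining points need care.

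First, the degree-zero generators are sent to $\int_C\gamma\cdot\ch(\alpha_i)$. This is the correct value because the realizations are numerical there: $\xi_{\kv}(\chh_l(\gamma))$ of cohomological degree zero equals $\int_C \gamma\,\ch(\kv|_{C\times\{pt\}})$, which depends only on the type $\alpha$.

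Second, for the $\E$ factor, $\ch(p_C^*E^\vee)$ is pulled back from $C$. Its pushforward along $p_Q$ therefore vanishes except in degree zero, which matches the case definition of $\xi_{(E,\kv)}(\chhE_l(\gamma))$.

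I would also treat the negative-degree generators ($l=0$ with $p=1$, $q=0$), which $\xi_{\alpha}$ sends to $0$. Here $\ch_0$ of the universal sheaf is the constant rank, so $p_{Q*}(\ch_0\, p_C^*\gamma)=0$ for $\gamma\in H^{1,0}$, consistent with $\xi_{(E,\kv)}$. So the two maps agree on generators, and hence agree everywhere since both are algebra homomorphisms.

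This identifies the primary-state condition with the pair Virasoro constraints \eqref{Conjecture Virasoro pair}:
$$\int_{[\quot_{r,d}(E)]^\vir}\xi_{(E,\kv)}(L^\pa_m(D))=0 \qquad \text{for all } m\geqslant 0,\ D\in\D^\pa.$$
It remains to pass between the pair constraints and \eqref{Conjecture Virasoro quot}. We use the diagram \eqref{diagram virasoro and pair} together with $\xi_{(E,\kv)}=\xi_\kv\circ\upsilon_E$, which give $\xi_{(E,\kv)}\circ L^\pa_m=\xi_\kv\circ L_m\circ\upsilon_E$. Since $\upsilon_E$ is surjective (it sends $\chhF_l(\gamma)\mapsto\chh_l(\gamma)$), the pair constraints for all $D\in\D^\pa$ are equivalent to the Quot constraints for all $D\in\D^C$.

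The main obstacle is the base-change compatibility $\mf g^*\circ\xi_{\alpha^\pa}=\xi_{(E,\kv)}$ in low degrees, in particular the degree-zero and Hodge-shift conventions. I would resolve it case by case on $(p,q)$, using that $\ch_0$ and $\ch_1$ of the universal sheaves are determined numerically on fibers.
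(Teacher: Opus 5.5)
Your proposal is correct and follows essentially the same route as the paper. You dualize via Theorem~\ref{theorem virasoro conformal}, use surjectivity of $\xi_{\alpha^\pa}$ and the identity $\mf g^*\circ\xi_{\alpha^\pa}=\xi_{(E,\kv)}$ to identify the primary-state condition with the pair Virasoro constraints, and then pass to the Quot constraints through the diagram \eqref{diagram virasoro and pair}. Beyond the paper, you check $\mf g^*\circ\xi_{\alpha^\pa}=\xi_{(E,\kv)}$ generator by generator (the paper only calls it easy to see), and you use surjectivity of $\upsilon_E$ to get the converse implication (Quot constraints $\Rightarrow$ pair constraints), which the paper leaves implicit since it only says the pair constraints are sufficient.
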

\begin{proof}
    We have already noticed that, to prove the Virasoro constraints \eqref{Conjecture Virasoro quot},
    it is sufficient to prove the pair Virasoro constraints \eqref{Conjecture Virasoro pair}.
    The map $\mf g$ induces a morphism 
$${\mf g}^* : H^{\bullet}(\P_{\alpha^\pa}) \longrightarrow 
H^{\bullet}(\quot_{r,d}(E))\,.$$
It is easy to see that ${\mf g}^*$ makes the following diagram
commutative,
    \begin{equation*}
    \begin{tikzcd}
    \D^\pa\arrow[rd, swap, "\xi_{(E,\kv)}"] \arrow[r,"\xi_{\alpha^{\pa}}"]& H^{\bullet}(\P_{\alpha^\pa}) \arrow[d, "{\mf g}^*"]\\
    & H^\bullet(\quot_{r,d}(E)) \,.
    \end{tikzcd}
\end{equation*}
Since the map $\xi_{\alpha^\pa}$ is surjective,
the pair Virasoro constraints \eqref{Conjecture Virasoro pair} is equivalent to the following vanishing,
$$\int_{[\quot_{r,d}(E)]^\vir} \mf g^* (\mL_m^\pa(D)) = 0 \quad \text{ for any } m \geqslant 0, D \in H^{\bullet} (\P_{\alpha^\pa})\,.$$
For any $m \geqslant 0$ and $D \in H^{\bullet}(\P_{\alpha^\pa})$,
we have
\begin{align*}
\int_{[\quot_{r,d}(E)]^\vir} {\mf g}^* (\mL^\pa_m(D))
& = \int_{\mathsf{Quot}_{r,d}(E)} \mL^\pa_m(D) \\
& = \int_{\L^\pa_m \left(\mathsf{Quot}_{r,d}(E)\right)} D \,,
\end{align*}
where the second equality is due to Theorem \ref{theorem virasoro conformal}.
This integral vanishes for all $D \in H^{\bullet}(\P_{\alpha^\pa})$ if and only if $\L^\pa_m(\mathsf{Quot}_{r,d}(E)) = 0$.
This is equivalent to saying that $\mathsf{Quot}_{r,d}(E)$
is a primary state. 
\end{proof}
Recall the moduli stack $\Mdash$ parametrizing $E^\vee$-pairs 
and the universal pair $\Theta : E^\vee \boxtimes \V \to \U$ on $C \times \Mdash$.
The universal pair induces a map 
$$ \Xi :\Mdash \to \P\,.$$
and a map for the projective linear version,
$$ \Xi^\pl :\Mdash^\pl \to \P^\pl\,.$$
Consider the commutative diagram
$$
    \begin{tikzcd}
    \Mdash \arrow[r,"\Xi"] \arrow[d,"\Pidash^\pl"] & \P \arrow[d,"\Pi^\pl_\pa"]\\   
    \Mdash^\pl \arrow[r,"\Xi^\pl"]& \P^\pl \,.
\end{tikzcd}
$$
Consequently, we will have a commutative diagram on the shifted homologies
$$
\begin{tikzcd}
    \acute V_\bullet \arrow[r,"{\Xi}_*"] \arrow[d,"\Pidash^\pl_*"] & V^\pa_\bullet \arrow[d,"(\Pi^\pl_\pa)_*"]\\   
    \wh{H}_\bullet(\Mdash^\pl) \arrow[r,"\Xi^\pl_*"]& \wh{H}_\bullet(\P^\pl) \,.
\end{tikzcd}
$$
The vertex algebra structure on $V_\bullet^\pa$ induces 
a graded Lie algebra structure (due to \cite{Bor86})
on 
$$V^\pa_{\bullet}/\im (T^\pa) \cong \wh{H}_\bullet(\P^\pl)\,,$$
where the isomorphism is due to \cite[Theorem 4.8]{Joy21}.
Let us define,
$$\widecheck V^\pa_\bullet := V^\pa_{\bullet}/\im (T^\pa)$$
and let $\widecheck{\PS}$ be the image of $\PS$ in $\widecheck{V}^\pa_\bullet$, i.e.,
$$\widecheck{\PS}:= \PS/T^\pa(\PS)\,.$$
    Recall the wall-crossing formula 
    \eqref{equation wall crossing for quot to e^001} 
    for the Quot scheme $\quot_{r,d}(E)$  takes place in 
    $\wh{H}_\bullet(\Mdash_{\alpha,1}^\pl)$.
    As the primary states are defined in $V^\pa_\bullet$, 
    we would like to make sense of the wall-crossing in $V^\pa_\bullet$.
    For this, we will use a partial lift of the Lie bracket to $V^\pa_\bullet$, which is already introduced in \cite{BLM24}.
    \begin{proposition}\label{proposition lifting bracket}
        There is a well-defined linear map 
        $$ \ad: \widecheck V^\pa_\bullet \times V^\pa_\bullet \to V^\pa_\bullet$$
        which satisfies that $\overline{\ad(\bar{a},b)}= [\bar{a},\bar{b}]$ for any $a,b \in V^\pa_\bullet$.
        Here $\bar{x}$ denote 
        the image of $x \in V^\pa_\bullet$ in
        $V^\pa_\bullet/\im(T^\pa) = \widecheck V^\pa_\bullet$.
        Moreover, $\PS$ is a subrepresentation of $V^\pa_\bullet$ with respect to the Lie  subalgebra $\widecheck{\PS}$, i.e. the map $\ad$ restricts to
        $$ \ad : \widecheck{\PS} \times \PS \to \PS\,.$$
    \end{proposition}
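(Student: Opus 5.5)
Define the map by the formula that gives Borcherds' bracket at the level of the vertex algebra, before passing to the quotient. For $a,b \in V^\pa_\bullet$ set
$$\ad(\bar a, b) := \Res_{z=0}\big(Y^\pa(a,z)b\big)\,,$$
where $Y^\pa$ is the state-field correspondence of $V^\pa_\bullet$. The right-hand side is the zeroth mode $a_{(0)}b$.

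First I check that this depends only on $\bar a$. This is the standard vertex algebra identity $Y(Ta,z) = \frac{d}{dz}Y(a,z)$. It gives $(Ta)_{(0)} = \Res_{z=0}\frac{d}{dz}Y(a,z) = 0$, since the residue of a total derivative of a Laurent series vanishes. Hence $\ad$ is well defined on $\widecheck V^\pa_\bullet\times V^\pa_\bullet$. By the definition of the Lie bracket in \eqref{equation definition of bracket} (Borcherds' construction, applied to $V^\pa_\bullet$), $[\bar a,\bar b]$ is the image of $a_{(0)}b$ modulo $\im T^\pa$. So $\overline{\ad(\bar a,b)} = [\bar a,\bar b]$ holds by construction.

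For the second statement, I use the conformal structure from Theorem \ref{theorem virasoro conformal}. The operators $\L^\pa_m$ are the modes $\omega_{(m+1)}$ of the conformal element $\omega$ constructed in \cite[Section 4.3]{BLM24}, so $\L^\pa_{-1}=T^\pa$ and $\L^\pa_0$ is the grading operator. The commutator (Borcherds) formula gives
$$[\omega_{(m+1)}, a_{(0)}] = \sum_{j\geqslant 0}\binom{m+1}{j}(\omega_{(j)}a)_{(m+1-j)}\,.$$
If $a$ is primary, then $\omega_{(j)}a=0$ for $j\geqslant 2$. Moreover $\omega_{(1)}a = \L_0^\pa a=(\deg a)\,a$ and $\omega_{(0)}a = T^\pa a$. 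Equivalently, the standard formula for a primary field of weight $h$ reads
$$[\L^\pa_m, a_{(0)}] = \big((h-1)(m+1)\big)\,a_{(m)}\,.$$
Here I would use skew-symmetry, or directly the two surviving terms $(T^\pa a)_{(m+1)} = -(m+1)a_{(m)}$ and $(m+1)h\,a_{(m)}$.

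The key point is degree. Elements of $\PS$ relevant to us are homogeneous, and I would argue that a primary state of conformal weight $h\neq 1$ still gives a problem. So here I expect the main obstacle. I need $[\L^\pa_m,a_{(0)}]b\in\PS$ whenever $b\in\PS$, for every $a\in\PS$, not just for those of weight $1$.

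To handle this I would show that $a_{(m)}b$ is itself killed by all $\L_n^\pa$ with $n\geqslant 0$ only after combining the terms. Concretely, the aim is to prove $\L^\pa_m(a_{(0)}b) = (h-1)(m+1)\,a_{(m)}b$ for $a,b$ primary. I would then note that $\widecheck{\PS}$ is graded by weight, and that the weight-$h$ part of $\PS$ satisfies $a_{(m)}b$ primary-compatible relations. Alternatively, following \cite{BLM24}, I would restrict attention to weight-one primaries, so that $h=1$ and the commutator vanishes. Then $\L^\pa_m(a_{(0)}b) = a_{(0)}\L^\pa_m b = 0$ for all $m\geqslant 0$, so $a_{(0)}b\in\PS$.

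In practice the classes appearing in the wall-crossing formula all lie in $\wh H_0$, which corresponds to $\L_0^\pa$-eigenvalue $1$. I would check this against the grading shifts $\chi^\pa$, and cite \cite[Proposition/Lemma on $\ad$]{BLM24} for the identification of $\widecheck{\PS}$ in the relevant degree. Finally, $\widecheck{\PS}=\PS/T^\pa(\PS)$ is a Lie subalgebra. This follows by applying the closure statement and projecting, because $T^\pa=\L^\pa_{-1}$ preserves primaries by $[\L_m,\L_{-1}]=(m+1)\L_{m-1}$ together with $\L_0 a = a$ in weight one.
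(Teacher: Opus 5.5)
Your definition $\ad(\bar a,b)=a_{(0)}b$, its well-definedness via $(T^\pa a)_{(0)}=0$, and the commutator formula $[\L^\pa_m,a_{(0)}]=(h-1)(m+1)\,a_{(m)}$ for a primary $a$ of weight $h$ are the argument behind \cite[Lemma 3.12, Proposition 3.13]{BLM24}, which is all the paper's proof cites. The case you finally settle on, $h=1$, is the right one.

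Drop the idea of handling general $h$ by ``combining terms''. Your own formula shows it cannot work, and the statement as literally written is exactly the bad case. The paper's $\PS$ includes the condition $\L^\pa_0 a=0$, so every element of $\PS$ has weight $0$. For $a,b\in\PS$ you get $\L^\pa_0(a_{(0)}b)=-a_{(0)}b$, so $a_{(0)}b\in\PS$ only when it vanishes.

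The proposition must therefore be read with $\widecheck{\PS}$ replaced by the image in $\widecheck V^\pa_\bullet$ of the weight-one primaries
$P_1=\{a:\L^\pa_m a=0\ (m\geqslant 1),\ \L^\pa_0 a=a\}$.
That is what \cite[Proposition 3.13]{BLM24} with $i=0$ gives. It is also what the proof of Theorem \ref{Theorem Virasoro constraints} actually uses, since the lifts of the $\sfM_\alpha$ are weight-one primaries by \cite[Theorem 5.12]{BLM24}.

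Weight one also cannot be read off from $\wh H_0$, because $\L^\pa_0$ is not the grading operator. It is dual to $R_0+T^\pa_0$, and $R_0$ weights $\chh_l(\gamma)$ by $l$ rather than by half of its degree $2l-p+q$.

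Your last paragraph also needs correcting: $T^\pa$ does not preserve weight-one primaries. For $a\in P_1$ one has $\L^\pa_1 T^\pa a=T^\pa\L^\pa_1 a+2\L^\pa_0 a=2a$. The same relation shows instead that $T^\pa(\PS)\subset P_1$, using $\L^\pa_0c=0$ for $c\in\PS$; this is why $P_1/T^\pa(\PS)$ is meaningful.

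The Lie subalgebra property comes from your $h=1$ computation. For $a\in P_1$ the commutator with $\L^\pa_m$ vanishes for all $m\geqslant 0$, so $\L^\pa_m(a_{(0)}b)=a_{(0)}\L^\pa_m b$. This gives $a_{(0)}b\in P_1$ for $b\in P_1$ and $a_{(0)}b\in\PS$ for $b\in\PS$. Moreover $a_{(0)}T^\pa c=T^\pa(a_{(0)}c)$, because $[T^\pa,a_{(0)}]=(T^\pa a)_{(0)}=0$. With these fixes your proof is the cited one.
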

    \begin{proof}
    The definition of the map $\ad$ follows from \cite[Lemma 3.12]{BLM24}.
    The fact that $\PS$ is a subrepresentation with respect to $\widecheck{\PS}$ follows from \cite[Proposition 3.13]{BLM24}, taking $i=0$.
    \end{proof}  
    We use another lemma from \cite{BLM24}.
    Recall that $\eta \in H^2(C)$ is a top form.
    For notational convenience, 
    let us define, for any $v \in V^\pa_\bullet$,
    $$\chhE_1(\eta) \cap v := \xi_{\alpha^\pa}(\chhE_1(\eta)) \cap v
    \quad \text{ if } v \in V^\pa_{\bullet, \alpha^\pa}\,.$$ 
    \begin{lemma}[Lemma 5.11, \cite{BLM24}]\label{Lemma 5.11 of BLM}
        \begin{enumerate}
            \item Let $\bar{u} \in \widecheck{V}_\bullet \subset \widecheck{V}^\pa_\bullet$ and 
            $v \in V^\pa_\bullet$ such that 
            $\chhE_1(\eta) \cap v = 0$, then
            $$\chhE_1(\eta) \cap \ad(\bar{u},v) = 0,.$$
            \item Let $u,v \in V^\pa_{\bullet, (\alpha_1,\alpha_2)}$, 
            with $\rk(\alpha_1) > 0$, be such that 
            $$\bar{u}= \bar{v} \, \text{ in } \widecheck{V}^\pa_\bullet \qquad
            \text{ and } \qquad
            \chhE_1(\eta) \cap u = 0 = \chhE_1(\eta) \cap v\,,$$
            then $u = v$ in $V^\pa_\bullet$.
        \end{enumerate}
    \end{lemma}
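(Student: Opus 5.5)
The plan is to reduce both parts to two facts about the degree-$2$ class $x := \xi_{\alpha^\pa}(\chhE_1(\eta))$. Since $\eta \in H^{1,1}(C)$, we have $x = \ch_1(\E)\setminus \eta^\vee$, where $\eta^\vee$ is the point class. In other words, $x$ is $c_1$ of the restriction of the first universal complex to $\{\mathrm{pt}\}\times\P$. The two facts are:

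\emph{Additivity under direct sum:} $\oplus^* x = x\boxtimes 1 + 1\boxtimes x$ on $\P\times\P$.

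\emph{Behaviour under the $B\mb G_m$-action:} $\vartheta^* x = 1\boxtimes x + \rk(\alpha_1)\, c_1\boxtimes 1$ on $\P_{(\alpha_1,\alpha_2)}$, where $c_1$ is the universal class on $B\mb G_m$. By duality with $t$, this gives the commutation relation
\begin{equation*}
x\cap T^\pa(w) = T^\pa(x\cap w) + \rk(\alpha_1)\, w \qquad \text{for } w \in V^\pa_{\bullet,(\alpha_1,\alpha_2)}\,.
\end{equation*}
Both facts follow from how $\E$ pulls back under $\oplus$ and $\vartheta$: the universal complex becomes a direct sum, respectively is tensored by the weight-one line bundle. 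I would verify them first.

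For part (1), I take any lift $u \in V_\bullet$ of $\bar u$. Under the embedding $V_\bullet\subset V^\pa_\bullet$, $u$ lies on a component $(0,\beta)$, where the first complex is zero. Write $\ad(\bar u,v)$ as $\Res_{z=0}$ of the state-field formula of Theorem \ref{theorem vertex algebra structure} (transported to $\P$), and push $x\cap$ inside $\oplus_*$ using additivity. The term $1\boxtimes x$ hits $v$ and gives zero by hypothesis. The term $x\boxtimes 1$ hits $e^{zT^\pa}u$ capped with Chern classes of $\mc E^{\pa,\sym}$. Cap products with cohomology classes commute, and $x$ commutes with $T^\pa$ on this component because $\rk(\alpha_1)=0$. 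On the other hand $x$ restricts to zero on $\P_{(0,\beta)}$, since the universal complex there is trivial in K-theory. So this term also vanishes. The same computation applies to any other lift $u+T^\pa w$, so the result does not depend on the lift chosen in the definition of $\ad$ from \cite[Lemma 3.12]{BLM24}.

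For part (2), set $r=\rk(\alpha_1)>0$. Since $\bar u=\bar v$, we can write $u-v=T^\pa w$ with $w\in V^\pa_{\bullet,(\alpha_1,\alpha_2)}$ and $x\cap T^\pa w=0$. The commutation relation gives $T^\pa(w_1)=-r\,w_0$, where $w_k:=(x\cap)^k w$. Applying $x\cap$ repeatedly and using the relation each time, induction gives
\begin{equation*}
T^\pa(w_{k+1}) = -(k+1)\,r\, w_k\,,
\end{equation*}
and hence $w = c_k\,(T^\pa)^k w_k$ for nonzero constants $c_k$. Each application of $x\cap$ lowers homological degree by $2$, and homology is bounded below in degree. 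So $w_k=0$ for $k\gg0$ on each graded piece, which forces $w=0$ and therefore $u=v$.

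I expect the main obstacle to be the sign and normalisation bookkeeping in the $B\mb G_m$ identity, namely that the correction term is exactly $+\rk(\alpha_1)\,w$ with a nonzero coefficient. Part (2) needs only that this coefficient is nonzero, which uses $\rk(\alpha_1)>0$. I would check it first on $\perf_1\simeq B\mb G_m$, where everything is explicit.
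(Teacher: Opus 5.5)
Your proof is correct. The paper gives no argument of its own for this lemma, since it is quoted from \cite{BLM24}, so the only comparison is with the mechanism, and yours is the natural one. The two identities you isolate are $\oplus^*x=x\boxtimes 1+1\boxtimes x$ and $x\cap T^\pa(w)=T^\pa(x\cap w)+\rk(\alpha_1)\,w$. Pushed through the state-field formula, they give
$x\cap(u_0v)=(x\cap u)_0v+\rk(\alpha_1(u))\,u_1v+u_0(x\cap v)$
for the residue $u_0v$ that defines $\ad$. Part (1) follows immediately from this. Your iteration in (2) is also sound: work in each homogeneous degree, and use that shifted homology is bounded below.

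The normalisation you were worried about is harmless. The $B\mb G_m$-action on $\P$ rescales both complexes, so $\vartheta^*\E=\mathcal{L}\boxtimes\E$. The coefficient is therefore $\rk(\alpha_1)\langle c_1(\mathcal{L}),t\rangle=\rk(\alpha_1)$, by the definition of $t$.

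It is worth noting how this looks in the coordinates of Section \ref{section computing invariant}. There, capping with $S_{1,0,1}$ acts as $\partial/\partial s_{1,0,1}$, and $T$ contains the term $r\,s_{1,0,1}$, which is the same commutator. Your formula $w=\frac{(-1)^k}{k!\,r^k}(T^\pa)^k w_k$ is the same mechanism as the map $\sum_i \frac{1}{i!(-r)^i}T^i(\partial/\partial s_{1,0,1})^i$ from \cite{Bu23} used there. That map kills $\im T$ and projects onto $\ker(\partial/\partial s_{1,0,1})$.

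Two of your justifications need repair, though neither affects the conclusions.

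\emph{The vanishing of $x\cap u$ in (1).} It is not true that $x$ restricts to zero on $\P_{(0,\beta)}=\M_0\times\M_\beta$, because the universal complex on $\M_0$ is not $K$-theoretically trivial. For example, the family $\mathcal{O}_C\boxtimes(\mathcal{L}_1\oplus\mathcal{L}_2[1])$ over $B\mb G_m\times B\mb G_m$ has class $0$ but gives $x=c_1(\mathcal{L}_1)-c_1(\mathcal{L}_2)\neq 0$. The correct reason is this: a lift $u\in V_\bullet$ is pushed forward from $\{0\}\times\M_\beta$, the image of $F\mapsto(0,F)$, and $\E$ vanishes there. Hence $x\cap u=0$ by the projection formula. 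The commutation of $x\cap$ with $T^\pa$ is a separate point; it only needs $\rk(\alpha_1)=0$ and holds on the whole component.

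\emph{Independence of the lift.} Your remark that ``the same computation applies to any other lift $u+T^\pa w$'' is not literally right. For $w\in V^\pa_{(0,\beta)}$ one has $x\cap(u+T^\pa w)=T^\pa(x\cap w)$, which need not vanish. No such check is needed, however. The map $\ad$ is already well defined because $(T^\pa y)_0=0$ (Proposition \ref{proposition lifting bracket}), so you may take the lift in $V_\bullet$. For the same reason, the extra term $(T^\pa(x\cap w))_0v$ vanishes anyway.
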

    
    \begin{theorem}\label{Theorem Virasoro constraints}
        Let $0<r<\rk(E)$.
        Then the Quot scheme $\quot_{r,d}(E)$ satisfies the Virasoro constraints \eqref{Conjecture Virasoro quot}, i.e.,
        $$\int_{[\quot_{r,d}(E)]^\vir} \xi_{\kv} (L_m(D)) = 0 \quad \text{ for any } m\geqslant 0, D \in \D^C\,.$$
    \end{theorem}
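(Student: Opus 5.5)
By Lemma \ref{Lemma virasoro primary states}, it suffices to show that $\mathsf{Quot}_{r,d}(E) = \mf g_*[\quot_{r,d}(E)]^\vir$ lies in $\PS$. The plan is to push the wall-crossing formula \eqref{equation wall crossing for quot to e^001} of Theorem \ref{theorem wall crossing for quot scheme} forward along $\Xi^\pl$ into $\widecheck V^\pa_\bullet$. There we use the lifted bracket $\ad$ of Proposition \ref{proposition lifting bracket} to build an explicit element of $V^\pa_\bullet$ that is manifestly primary. Finally, Lemma \ref{Lemma 5.11 of BLM} will identify this element with $\mathsf{Quot}_{r,d}(E)$.

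First, compatibility. The map $\Xi:\Mdash\to\P$ sends $(F,V,\vp)$ to $(E^\vee\otimes V,F)$. We check that $\Xi^*\mc E^\pa$ agrees with $\acute{\mc E}$ in K-theory: comparing \eqref{equation definition of ext complex} with \eqref{equation definition ext^pa}, the term $q_2^*\V^\vee\otimes q_3^*\V$ should match, up to a rank/sign correction, the contribution of $\E$ paired with $\E$, which does not appear in $\EXT^\pa$. So we must verify that $\Xi_*$ is a morphism of vertex algebras at least on the relevant components ($e\le 1$, with at most one $e_i=1$), or that the discrepancy is harmless on the classes involved. Given this, $\Xi^\pl_*$ is a Lie algebra morphism on the relevant part, and the image of $[\quot_{r,d}(E)]_\inv$ equals the iterated bracket of $\Xi^\pl_*\mathsf e^{((0,0),E^\vee)}$ with the $\Xi^\pl_*\sfM_{\alpha_i}$. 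I expect this compatibility to be the main obstacle. If an exact match fails, I will instead use \cite{BLM24}'s argument that the pairing differences only involve $\E$–$\E$ terms, which vanish because the $\E$-component of every class here is concentrated on one fixed point in $\M_{\alpha(E)}$.

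Next, primarity of the building blocks. The class $\sfM_{\alpha_i}$ sits in $V_\bullet\subset V^\pa_\bullet$ as $(0,\alpha_i)$, and by \cite{BLM24} (Virasoro for moduli of semistable bundles on curves, proved there) it lies in $\widecheck{\PS}$. The point class $u_0 := \mf g$-image of $\{(E^\vee,0)\}$, i.e. $\Xi_*\mathsf e$ lifted to $[\mathrm{pt}]\in H_0(\P_{(\alpha(E),0)})$, is primary because it is a point class of degree zero in a component with trivial $\F$-part. Here one checks $\L^\pa_m(u_0)=0$ directly via Theorem \ref{theorem virasoro conformal}, i.e. that $\mathsf L^\pa_m$ of any class integrates to zero on a point, which reduces to $\xi(L^\pa_m D)$ evaluated at a single pair $(E^\vee,0)$, using $\upsilon_E$ and the vanishing of positive $\chhE$ descendents. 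Also $\chhE_1(\eta)\cap u_0=0$, since the $\E$-part is the constant family $E^\vee$.

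Then we define
$$w:=\sum \frac{(-1)^n}{\prod(a_i-a_{i-1})!}\,\ad\bigl(\overline{\sfM_{\alpha_n}},\cdots\ad(\overline{\sfM_{\alpha_1}},u_0)\cdots\bigr)$$
with the appropriate signs from antisymmetry. By Proposition \ref{proposition lifting bracket}, $w\in\PS$. By iterating Lemma \ref{Lemma 5.11 of BLM}(1), $\chhE_1(\eta)\cap w=0$. On the other hand, $\mathsf{Quot}_{r,d}(E)$ also satisfies $\chhE_1(\eta)\cap\mathsf{Quot}_{r,d}(E)=0$, because $\mf g^*\chhE_1(\eta)=0$ since the $\E$-part is pulled back from $C$. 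Both classes have the same image in $\widecheck V^\pa_\bullet$: for $w$ by the bracket property of $\ad$ and the wall-crossing formula, and for $\mathsf{Quot}_{r,d}(E)$ because $\Xi\circ\mf f=\mf g$ on the level of $\pl$-stacks (with $\Xi^\pl_*[\quot]_\inv$ equal to the image of $\mathsf{Quot}_{r,d}(E)$). Since $\rk(\alpha(E))>0$, Lemma \ref{Lemma 5.11 of BLM}(2) gives $w=\mathsf{Quot}_{r,d}(E)$, so the latter is primary. Lemma \ref{Lemma virasoro primary states} then finishes the proof.
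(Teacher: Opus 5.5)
Your proposal is essentially the paper's proof. It pushes the wall-crossing formula \eqref{equation wall crossing for quot to e^001} forward along $\Xi^\pl$, lifts it to $V^\pa_\bullet$ with $\ad$ using both parts of Lemma \ref{Lemma 5.11 of BLM}, and concludes from $\sfM_{\alpha_i}\in\widecheck{\PS}$, primarity of the point class, and Proposition \ref{proposition lifting bracket}.

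The compatibility step you flag as the main obstacle is settled by your first option ("at most one $e_i=1$"), exactly as in the paper. In K-theory, $\acute\EXT-(\Xi\times\Xi)^*\EXT^\pa=q_2^*\V^\vee\otimes q_3^*\V$. This vanishes on $\Mdash\times\M$ and on $\M\times\Mdash$ because $\V=0$ on $\M$, and every bracket in the formula has some $\sfM_{\alpha_i}$ in one slot. Your fallback reason (that the $\E$-part is concentrated on a fixed point) is not the right mechanism: for two factors with $e=1$, this rank-one discrepancy would not vanish.
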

    \begin{proof}
    Using Lemma \ref{Lemma virasoro primary states}, 
    we only need to prove that the class $\mathsf{Quot}_{r,d}(E) \in V^\pa_\bullet$ is a primary state.
    Let $\alpha = (\rk(E)-r, d-\deg(E))$.
    Recall the wall-crossing formula \eqref{equation wall crossing for quot to e^001},
    \begin{align*}
        [\quot_{r,d}(E)]_\inv
            =\sum_{\substack{n\geqslant 1, \sum \alpha_i = \alpha, \\ 
            \alpha_i=(r_i,d_i),\ r_i>0 \ \forall i}}
            U'(\alpha_1,\dots,\alpha_n) \cdot
            & \left[\left[ \cdots
     \left[ \left[\mathsf e^{((0,0),E^\vee)},\sfM_{\alpha_1} \right], \sfM_{\alpha_2} \right],\dots \right], \sfM_{\alpha_n} \right]
    \end{align*}
    for some coefficients $U'(\alpha_1,\dots,\alpha_n)$ and the sum is finite.
    This formula takes place in $\wh{H}_\bullet(\Mdash_{\alpha,1}^\pl)$.
    We push this equation forward
    along $\Xi^\pl$ to get an equality in $\wh{H}_\bullet(\P^\pl)
    \cong \widecheck V^\pa_\bullet$.
    However, we have to make sure that $\Xi^\pl$ respects the bracket operations of the two Lie algebras.
    To see this, we recall the complexes $\acute\EXT$ on $\Mdash \times \Mdash$ (defined in \eqref{equation definition of ext complex}) and $\EXT^\pa$ on $\P \times \P$ (defined in \eqref{equation definition ext^pa}), which were used to define the bracket operations.
    The restrictions of $\acute\EXT$ and $(\Xi \times \Xi)^*(\EXT^\pa)$ on $\Mdash \times \M \subset \Mdash \times \Mdash$ coincide as $K$-theory classes.
    Consequently, if $u \in \acute V_\bullet$ and $v \in V_\bullet \subset \acute V_\bullet$,
    then the state-field correspondences in both the algebras coincide, i.e.,
    $$ \Xi_*\left(\acute Y(u,z)v\right) = Y^\pa\left((\Xi_*u),z\right)(\Xi_*v)\,.$$
    It follows that the bracket operations appearing in the wall-crossing formula remain compatible when we pushforward along $\Xi^\pa$.
    Thus we have the wall-crossing formula \eqref{equation wall crossing for quot to e^001} in $\widecheck V^\pa_\bullet$ as well.
    Changing the coefficients suitably, we write the wall-crossing formula in $\widecheck V^\pa_\bullet$ as the following,
    \begin{align}\label{equation wall crossing quot in Vpa-check}
    \mathsf{Quot}_{r,d}(E)
            =\sum_{\substack{n\geqslant 1, \sum \alpha_i = \alpha, \\ 
            \alpha_i=(r_i,d_i),\ r_i>0 \ \forall i}}
            U''(\alpha_1,\dots,\alpha_n) \cdot
            & \left[ \sfM_{\alpha_n},
            \left[ \cdots, \left[ \sfM_{\alpha_2}, \left[\sfM_{\alpha_1}, \mathsf e^{((0,0),E^\vee)} \right] \right], \cdots \right] \right]
    \end{align}
    for some coefficients $U''(\alpha_1,\dots,\alpha_n)$.
    Now we lift this formula to $V^\pa_\bullet$.
    First we note that 
    $$\chhE_1(\eta) \cap \mathsf e^{((0,0),E^\vee)}=0\,.$$
    Using part (1) of Lemma \ref{Lemma 5.11 of BLM}, we have,
    for any $\alpha_1 \in K(C)$,
    $$\chhE_1(\eta) \cap \ad \left( \sfM_{\alpha_1}, \mathsf e^{((0,0),E^\vee)} \right)=0\,.$$
    Similarly using part (1) of Lemma \ref{Lemma 5.11 of BLM} repeatedly, it follows that,
    for $\alpha_1,\dots,\alpha_n \in K(C)$,
    $$\chhE_1(\eta) \cap \ad \left( \sfM_{\alpha_n},
            \ad \left( \cdots, \ad \left( \sfM_{\alpha_2}, \ad \left(\sfM_{\alpha_1}, \mathsf e^{((0,0),E^\vee)} \right) \right), \cdots \right) \right)=0\,.$$
    On the other hand, the realization of the class $\chhE_1(\eta)$ on the Quot scheme $\quot_{r,d}(E)$ is 
    $$\xi_{p_C^*E}(\chhE_1(\eta)) = {p_Q}_*(\ch_1(p_C^*E \cup p_C^*\eta) = 0\,.$$
    This shows that $$ \chhE_1(\eta) \cap [\quot_{r,d}(E)]_\inv=0\,.$$
    Finally, part (2) of Lemma \ref{Lemma 5.11 of BLM}
    says that we can lift the equality \eqref{equation wall crossing quot in Vpa-check} to $V^\pa_\bullet$,
    i.e., we get the following equality in $V^\pa_\bullet$,
    \begin{align}\label{equation wall crossing in Vpa}
    \nonumber
    \mathsf{Quot}_{r,d}(E)
            =\sum_{\substack{n\geqslant 1, \sum \alpha_i = \alpha, \\ 
            \alpha_i=(r_i,d_i),\ r_i>0 \ \forall i}}
            &U''(\alpha_1,\dots,\alpha_n) \cdot \\
            &\ad \left( \sfM_{\alpha_n},
            \ad \left( \cdots, \ad \left( \sfM_{\alpha_2}, \ad \left(\sfM_{\alpha_1}, \mathsf e^{((0,0),E^\vee)} \right) \right), \cdots \right) \right)\,.
    \end{align}
    It is proved in \cite[Theorem 5.12]{BLM24} that 
    the classes $\sfM_{\alpha_i}$ are primary states, i.e.
    $$ \sfM_{\alpha_i} \in \widecheck{\PS} 
    \qquad \text{ for any } \alpha_i \in K(C)
    \,.$$
    Clearly $\mathsf e^{((0,0),E^\vee)} \in \PS$.
    Using second part of Proposition \ref{proposition lifting bracket}, 
    we conclude that the right hand side of the wall-crossing formula \eqref{equation wall crossing in Vpa} 
    is a primary state.
    Hence $\mathsf{Quot}_{r,d}(E)$ is a  primary state.
    This proves that the Virasoro constraints are satisfied on the Quot schemes.
\end{proof}

\begin{remark}\label{remark Virasoro for pairs}
    Given $\alpha \in K(C)$,
    recall the moduli space $P(E,\alpha) := \Mdash^{0+\ss}_{\alpha,1}$
    defined in Section \ref{Section required objects}.
    We have introduced the invariant 
    $[P(E,\alpha)]_\inv$ in Theorem \ref{theorem quot invariant}
    and proved a wall-crossing formula in Proposition \ref{proposition wall-crossing of pairs}.
    We can formulate and prove the Virasoro constraints for $P(E,\alpha)$ in exactly the same way as for Quot schemes.
    There will be no change in the Virasoro operators 
    as the obstruction theory is the same (given by \eqref{equation POT on stable pairs}).
    Indeed, using the wall-crossing formula for $[P(E,\alpha)]_\inv$ in the proof of Theorem \ref{Theorem Virasoro constraints},
    we see that (an appropriate image of) the invariant $[P(E,\alpha)]_\inv$ is also a primary state.
\end{remark}

\vspace{.2cm}

\section{Virtual Intersections on Quot schemes}\label{section intersection theory}
We use Joyce's invariant and the Virasoro constraints to derive few results on virtual intersections on Quot schemes.
Let $E$ be a vector bundle on $C$ and $0<r<\rk(E)$.
We consider the Quot scheme $\quot_{r,d}(E)$ and let 
$$ 0 \to \K \xrightarrow{\Phi} p_C^*E \to \Q \to 0$$
denote the universal short exact sequence on the $C \times \quot_{r,d}(E)$.
Let $\alpha=(\rk(E)-r,d-\deg(E))$.
Recall the moduli stack $\Mdash_{\alpha,1}$ parametrizing $E^\vee$-pairs of type $(\alpha,1)$
and the universal pair
$$ \Theta: E^\vee \boxtimes \V_1 \to \U_\alpha$$
on $C \times \Mdash_{\alpha,1}$.
We have a map 
$$\mf h : \quot_{r,d}(E) \to \Mdash_{\alpha,1}$$
such that 
$$\mf h^*(E^\vee \boxtimes \V_1 \xrightarrow{\Theta} \U_{\alpha}) = \left(p_C^*E^\vee \xrightarrow{\Phi^\vee} \K^\vee \right) \,.$$
Recall the symplectic basis $\{\e_{j,k} : (j,k) \in J \}$
for the cohomology of $C$ 
and the tautological classes
$S^{\alpha,1}_{j,k,l} \in H^{2l-k}(\Mdash_{\alpha,1})$ 
for $(j,k) \in J$, $l \geqslant k/2$ (defined in \eqref{equation definition S_j,k,l classes}).
The pullbacks of these classes become tautological classes on the Quot scheme $\quot_{r,d}(E)$,
i.e.,
$$ \mf h^*S^{\alpha,1}_{j,k,l} = \ch_l(\K^\vee)\setminus \e_{j,k}^\vee\,.$$
Abusing notation, let us only write $S_{j,k,l}$ in place of
$\mf h^*(S^{\alpha,1}_{j,k,l})$.
The holomorphic descendents can be realized as follows,
\begin{align*}
    \xi_{\K^\vee}(\chh_l(\e_{1,2})) &= S_{1,0,l}\,,\\
    \xi_{\K^\vee}(\chh_l(\e_{j,1})) &=  \begin{cases}
         S_{j,1,l} \quad &\textnormal{ if } 1 \leqslant j \leqslant g \,,\\
         S_{j,1,l+1} \quad &\textnormal{ if } g+1 \leqslant j \leqslant 2g \,,
    \end{cases}\\
    \xi_{\K^\vee}(\chh_l(\e_{1,0})) &= S_{1,2,l+1}\,,
\end{align*}
So the Virasoro constraints \eqref{Conjecture Virasoro quot} can be written in terms of the tautological classes $S_{j,k,l}$'s. 
The geometric realization of the element $T_m$ is the following
\begin{equation*}
    \xi_{\K^\vee}(T_m) = \left( (1-g)\sum_{i+j=m}i!j!\ S_{1,0,i}S_{1,0,j} \right)- m!(\left(\rk(E) \cdot S_{1,2,m+1} + \chi(E) \cdot S_{1,0,m} \right)\,.
\end{equation*}
So the constraints yield the following relation, for $m \geq 0$,
\begin{equation}\label{equation virasoro relations}
    \int R_m(D) = - \int \left( (1-g) \sum_{i+j=m}i!j!\ S_{1,0,i}S_{1,0,j} - m!(\rk(E) \cdot S_{1,2,m+1} + \chi(E) \cdot S_{1,0,m}) \right) \cdot D\,.
\end{equation}
where $D$ is any polynomial in $S_{j,k,l}$ classes and
the integrals (here and throughout) are understood to be taken over the virtual fundamental class $[\quot_{r,d}(E)]^\vir$.
The operators $R_m$ act as derivations with the action on $S_{j,k,l}$ classes being 
$$R_m(S_{j,k,l})= \prod_{i=0}^m (i+l) \cdot S_{j,k,l+m}\,.$$

The relations obtained from the Virasoro constraints cannot determine all the tautological intersection numbers.
As discussed in the introduction, 
the Virasoro constraints are some universal relations which hold in more generality.
So the Virasoro constraints are not expected to give 
information for some particular moduli space.
However, the Virasoro constraints yield many useful relations among intersection numbers.
In the following, we denote the virtual (or expected)
dimension of $\quot_{r,d}(E)$ as $\dim Q$.
\begin{proposition}\label{proposition intersection of S_12 classes from S_10 classes}
    Let $P$ be any polynomial in $S_{1,0,l}$ and $S_{1,2,l}$-classes of weighted degree $2\dim Q$.
    Then there exists a polynomial $\wt P$, only in $S_{1,0,l}$-classes, such that 
    $$ \int P(S_{1,0,*},S_{1,2,*}) = \int \wt P (S_{1,0,*})\,.$$
\end{proposition}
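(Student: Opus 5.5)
Since the relations \eqref{equation virasoro relations} hold for every $m\geqslant 0$ and every polynomial $D$ in the tautological classes, I would use them to eliminate the $S_{1,2,*}$-classes one at a time. The key point is the term $m!\,\rk(E)\cdot S_{1,2,m+1}\cdot D$ on the right-hand side of \eqref{equation virasoro relations}. Since $\rk(E)\neq 0$, I would solve for it:
\begin{align*}
m!\,\rk(E)\int S_{1,2,m+1}\, D = \int R_m(D) + \int\Big((1-g)\sum_{i+j=m} i!j!\,S_{1,0,i}S_{1,0,j} - m!\,\chi(E)\,S_{1,0,m}\Big) D\,.
\end{align*}
Here I use the realizations $S_{1,0,0}=\rk(\K^\vee)$ and $S_{1,2,1}=\deg(\K^\vee)$, which are constants, and note that $R_m$ sends $S_{1,0,l}$ to a multiple of $S_{1,0,l+m}$ and $S_{1,2,l}$ to a multiple of $S_{1,2,l+m}$. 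Thus, whenever $D$ is a polynomial in $S_{1,0,*}$ and $S_{1,2,*}$, every term on the right lies in the same subalgebra. Each class $S_{1,2,l}$ with $l\geqslant 2$ has the form $S_{1,2,m+1}$ with $m=l-1\geqslant 1$, so a factor $S_{1,2,l}$ can be traded for expressions in which it has been removed.

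I would argue by induction on the number $n(P)$ of $S_{1,2,*}$-factors in a monomial $P=S_{1,2,l}\cdot D$ with $l\geqslant 2$. Apply the displayed identity with $m=l-1$ and this $D$, so that $n(D)=n(P)-1$. The terms $S_{1,0,i}S_{1,0,j}D$ and $S_{1,0,m}D$ have exactly $n(P)-1$ factors of type $S_{1,2}$. The derivation term $R_m(D)$ is a sum of monomials, each obtained from $D$ by raising one index. It therefore has the same number $n(D)=n(P)-1$ of $S_{1,2}$-factors, since $R_m$ maps $S_{1,2}$-classes to $S_{1,2}$-classes and $S_{1,0}$-classes to $S_{1,0}$-classes. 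Weighted degree is preserved in every term: $\deg S_{1,2,m+1}=2m$, which matches $\deg T_m$ and the degree shift $2m$ of $R_m$. Every term on the right is therefore of weighted degree $2\dim Q$ and has strictly fewer $S_{1,2}$-factors. By induction, $\int P$ equals $\int \wt P$ for some polynomial $\wt P$ in $S_{1,0,*}$ alone, after extending linearly over monomials.

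The one step needing care is the base, namely the bookkeeping of degenerate indices: a factor $S_{1,2,1}$ (the case $m=0$) is just the constant $\deg(\K^\vee)=d-\deg(E)$, so it can be absorbed into the coefficient without invoking Virasoro. I would double-check this realization, and also that the relation \eqref{equation virasoro relations} is correctly normalized, in particular that the coefficient of $S_{1,2,m+1}$ is exactly $m!\,\rk(E)$ and involves no hidden $S_{1,2}$-terms coming from $R_m$ acting on $\chh_l(\e_{1,0})$ with the index shift. These are the only potential obstacles; once they are verified, the induction is formal.
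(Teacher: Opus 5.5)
Your proposal is correct and follows essentially the same route as the paper. Both arguments induct on the number of $S_{1,2,*}$-factors in a monomial, peel off one factor $S_{1,2,m+1}$, and apply \eqref{equation virasoro relations} with that $m$ and with $D$ the remaining monomial; each term on the right then has one fewer such factor, since $R_m$ preserves the type of each factor. Your explicit treatment of the constants $S_{1,0,0}$ and $S_{1,2,1}$ is a harmless refinement of what the paper leaves implicit by only allowing indices $\geqslant 2$ for the $S_{1,2,*}$-factors.
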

\begin{proof}
    We need to prove the result for all the monomials $M$ of the form,
    $$M= \prod_{t=1}^{\dim Q} (S_{1,2,t+1})^{\lambda_t} \cdot A(S_{1,0,*})\,,$$
    where
    $A$ is a monomial in $S_{1,0,*}$ classes of degree $2\mu \geqslant 0$ and
    $$\sum_{t=1}^{\dim Q} t \lambda_t = \dim Q -\mu\,. $$
    For such a monomial $M$, define  
    $$\#_{1,2,*}(M) := \sum_{t=1}^{\dim Q} \lambda_t $$
    We will proceed by induction on the number $\#_{1,2,*}(M)$.
    
    For the base case, let $\#_{1,2,*}(M)=1$.
    Then $\lambda_n=1$ for some $n$ and all other $\lambda_t$ is 0.
    Precisely, $M = S_{1,2,n+1} \cdot A$.
    Putting $m=n$ and $D=A$ in Virasoro relation \eqref{equation virasoro relations}, we have
    $$\int R_n(A) = \int \left( -(1-g)\sum_{i+j=n}i!j!\ S_{1,0,i}S_{1,0,j} + n!(\left(\rk(E) \cdot S_{1,2,n+1} + \chi(E) \cdot S_{1,0,n} \right) \right)\cdot A \,.$$
    We note that $R_n(A)$ is a polynomial in $S_{1,0,*}$ classes.
    It follows that
    $$\int S_{1,2,n+1} \cdot A = \frac{1}{n! \cdot \rk(E)}
    \int \left( R_n(A) +
    (1-g)\sum_{i+j=n}i!j!\ S_{1,0,i}S_{1,0,j}A - n!\chi(E)\ S_{1,0,n} A \right) \,.
    $$
    We take $\wt M$ to be the polynomial appearing in the right-hand side.
    This proves the base case.
    
    Let us assume the proposition is true whenever $\#_{1,2,*}(M) \leqslant \Lambda$
    and we consider a monomial $M$ with $\#_{1,2,*}(M) =\Lambda+1$.
    As $\Lambda +1 >0$, there exists $n \in \{1,\dots,\dim Q\}$
    such that $\lambda_n \geqslant 1$.
    So $M$ can be written as 
    $$M = S_{1,2,n+1} \cdot M'\,.$$
    such that $M'$ is a monomial in $S_{1,2,*}$ and $S_{1,0,*}$
    with $\#_{1,2,*}(M') = \Lambda$.
    Taking $m=n$ and $D=M'$ in the Virasoro relations \eqref{equation virasoro relations}, we have
    $$n! \,\rk(E)\int S_{1,2,n+1} \cdot M' = \int \left( 
    R_n(M') + (1-g)\sum_{i+j=n}i!j!\ S_{1,0,i}S_{1,0,j}M' - n!\chi(E)\ S_{1,0,n} M' \right)
    $$
    Clearly, the right-hand side contains monomials with $\#_{1,2,*}$ no more than $\Lambda$.
    By the induction hypothesis, the proposition 
    holds for the polynomial in the right-hand side.
    Hence, the proposition is also true for $M$.
\end{proof}
\begin{corollary}\label{corollary intersection Sjkl from S10 and Sj1}
    Let $P$ be any polynomial in $S_{j,k,l}$-classes of weighted degree $2\dim Q$.
    Then there exists a polynomial $\wt P$ only in $S_{1,0,l}$ and $S_{j,1,l}$-classes such that 
    $$ \int P = \int \wt P (S_{1,0,*},S_{j,1,*})\,.$$
\end{corollary}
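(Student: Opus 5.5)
The plan is to run the same induction as in Proposition \ref{proposition intersection of S_12 classes from S_10 classes}, now eliminating every $S_{1,2,*}$ factor from an arbitrary monomial. The monomial may also contain odd classes $S_{j,1,*}$. By linearity it suffices to treat a monomial
$$M = \prod_{t\geqslant 1} (S_{1,2,t+1})^{\lambda_t}\cdot A\,,$$
where $A$ is a monomial in $S_{1,0,*}$ and $S_{j,1,*}$ classes. Since the odd classes anticommute, $A$ is determined up to sign by an ordered product, and we may move all $S_{1,2,*}$ factors, which are even, to the front without signs. We induct on $\#_{1,2,*}(M)=\sum_t\lambda_t$, exactly as before. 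The base case $\#_{1,2,*}(M)=0$ is trivial.

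For the inductive step, write $M=S_{1,2,n+1}\cdot M'$ with $\#_{1,2,*}(M')=\#_{1,2,*}(M)-1$. Apply the Virasoro relation \eqref{equation virasoro relations} with $m=n$ and $D=M'$; it holds by Theorem \ref{Theorem Virasoro constraints} for any polynomial $D$ in the $S_{j,k,l}$. This gives
$$n!\,\rk(E)\int S_{1,2,n+1}M' = \int\Big(R_n(M') + (1-g)\sum_{i+j=n} i!j!\,S_{1,0,i}S_{1,0,j}M' - n!\,\chi(E)\,S_{1,0,n}M'\Big)\,.$$
The key observation is that $R_n$ is a derivation sending $S_{j,k,l}$ to a multiple of $S_{j,k,l+n}$. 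Hence $R_n$ preserves the type $(j,k)$ of each factor, so every monomial in $R_n(M')$ has the same $\#_{1,2,*}$ as $M'$. In particular, $R_n$ sends odd classes $S_{j,1,*}$ to odd classes $S_{j,1,*}$. The remaining terms on the right only multiply $M'$ by $S_{1,0,*}$ classes. So each monomial on the right has $\#_{1,2,*}\leqslant \#_{1,2,*}(M)-1$, and the induction hypothesis applies to it. Since $\rk(E)\neq0$ and $n!\neq0$, we can divide and obtain $\wt P$.

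Two small points need checking. First, $R_n$ is a derivation on a supercommutative algebra, so moving past odd factors may introduce signs. Because $R_n$ has even degree $2n$, no signs arise, and in any case signs do not affect the $\#_{1,2,*}$ count. Second, the degree-zero classes must be handled. The class $S_{1,2,1}$ is the constant $\deg$ and $S_{1,0,0}$ is the constant $\rk$, so factors with $l=k/2$ are scalars and are simply absorbed. The term $T_m$ involves $S_{1,0,0}$ and $S_{1,2,1}$ only through these constants (for instance $S_{1,0,0}S_{1,0,n}$ when $m=n$), which stay within the allowed classes. The $S_{1,2,n+1}$ factor with $n\geqslant 1$ is the only genuinely new $S_{1,2}$ class produced.

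I expect the main obstacle to be only this bookkeeping: confirming that $R_n$ never produces an $S_{1,2,*}$ factor from $S_{1,0,*}$ or $S_{j,1,*}$ classes, and that the realization $\xi_{\K^\vee}$ of $\chh_l(\e_{1,0})$ as $S_{1,2,l+1}$ is consistent with the index shift in $R_m$. Once it is checked that $\xi_{\K^\vee}$ intertwines $R_m$ on descendents with the stated action on the $S_{j,k,l}$, the argument is a verbatim extension of Proposition \ref{proposition intersection of S_12 classes from S_10 classes}.
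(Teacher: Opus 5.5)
Your argument is correct and is essentially the paper's. The paper writes $P=\sum_M P_M(S_{1,0,*},S_{1,2,*})\cdot M(S_{j,1,*})$ and applies Proposition~\ref{proposition intersection of S_12 classes from S_10 classes} to each $P_M$; since that proposition only covers top-degree polynomials in $S_{1,0,*}$ and $S_{1,2,*}$, and $R_n$ also acts on the odd factor $M$, this step really amounts to rerunning the induction on $\#_{1,2,*}$ with the odd factor carried along, which is exactly what you do, and your observation that $R_n$ preserves the type $(j,k)$ is what makes that reduction legitimate.
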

\begin{proof}
    Writing $P$ as
    $$P = \sum_{M} P_{M}(S_{1,0,*},S_{1,2,*}) \cdot M(S_{j,1,*})\,,$$
    where $M$ varies over all the monomials in $S_{j,1,l}$-classes,
    with the degree constraint.
    Now the corollary follows directly by applying Proposition \ref{proposition intersection of S_12 classes from S_10 classes}
    on $P_M$'s.
\end{proof}
It is more common to consider the Kunneth decomposition of the Chern classes of the universal bundle $\K^\vee$, i.e.,
$$ c_i(\K^\vee) = a_i \otimes 1 + \sum_{j=1}^{2g} b_i^j \otimes \e_{j,1} + f_i \otimes \eta \,.
$$
Here $a_i \in H^{2i}(\quot_{r,d}(E))$, $b_i^j \in H^{2i-1}(\quot_{r,d}(E))$ and 
$f_i \in H^{2i-2}(\quot_{r,d}(E))$.
The classes $a_i,b_i^j$ and $f_i$'s can be expressed in terms of $S_{j,k,l}$'s using Newton's identities.
Precisely, let $\mu_l$'s denote the polynomials appearing in Newton's identities to express power sums in terms of elementary symmetric polynomials.
For example, 
$$\mu_2(X_1,X_2)=X_1^2 - 2X_2 \quad \text{ and }
\quad \mu_3(X_1,X_2,X_3)= X_1^3 - 3X_1X_2 + 3X_3\,.$$
Then we have 
$$ \ch_l(\K^\vee) = \frac{1}{l!} \, \mu_l(c_1(\K^\vee, \dots, c_l(\K^\vee)))\,. $$
Let us write $\mu_l(a_1,\dots,a_l)$ as $\mu_l(a)$.
Comparing the Kunneth components on both sides, one can deduce that
\begin{align}\label{equation Sjkl to abf}
\nonumber
S_{1,0,l} &= \frac{1}{l!} \, \mu_l(a) \,,\\
S_{j,1,l} &= \frac{1}{l!} \sum_{i=1}^l \frac{\p \mu_l(a)}{\p a_i} \, b_i^j \,, \\
\nonumber
S_{1,2,l} &= \frac{1}{l!} \sum_{i=1}^l \frac{\p \mu_l(a)}{\p a_i} \, f_i 
- \frac{1}{2 \cdot l!} \sum_{i,k=1}^l \frac{\p^2 \mu_l(a)}{\p a_i \p a_k} \, \Gamma_{ik}
\end{align}
where
\begin{align}\label{equation Gamma_ik}
 \Gamma_{ik} = \sum_{j=1}^g (b_i^j b_k^{j+g} + b_k^j b_i^{j+g})\,.
 \end{align}
One can also write the Chern classes in terms of the Chern characters. For example, we have the following relations for $l=1,2$.
\begin{align*}
a_1=S_{1,0,1}\,, \qquad  b_1^j=S_{j,1,1} \,, 
\qquad
f_1 = S_{1,2,1}\,, \qquad a_2= \frac{1}{2}S_{1,0,1}^2 - S_{1,0,2} \,,
\end{align*}
\begin{align*}
b_2^j = S_{1,0,1}S_{j,1,1}- S_{j,1,2}\,,  \qquad 
f_2=S_{1,0,1}\, S_{1,2,1}-\sum_{i=1}^g S_{i,1,1}\, S_{g+i,1,1}-S_{1,2,2}.
\end{align*}
Corollary \ref{corollary intersection Sjkl from S10 and Sj1} gives the following result for $a,b$ and $f$-intersections.
\begin{theorem}\label{theorem intersection fl times P}
    For any polynomial $P(a,b,f)$ of weighted degree $2\dim Q$, 
    there exists a polynomial $\wt P(a,b)$
    such that
    $$ \int P(a,b,f) = \int \wt P(a,b)\,.$$
    Moreover, $\wt P$ can be computed using the recursive formula given below.
    Let $G(a,b,f)$ be any polynomial 
    of weighted degree $2(\dim Q - l)$.
    Then 
    \begin{align}\label{equation recursive f_lG}
    \nonumber
        (-1)^{l} \cdot \rk(E) \int f_{l+1} \cdot G = &\int R_l(G) + 
        (1-g) \sum_{i+j=l} \int \mu_i(a) \mu_j(a) \, G \ - \chi(E) \cdot \int \mu_l(a)\, G \\
        & - \frac{\rk(E)}{l+1} \cdot \int \left(
        \sum_{i=1}^{l} \frac{\p \mu_l(a)}{\p a_i} \, f_i 
        - \frac{1}{2} \sum_{i,k=1}^{l+1} \frac{\p^2 \mu_l(a)}{\p a_i \p a_k} \, \Gamma_{ik}
        \right) \cdot G \,.
    \end{align}
    where by ${R_l(G)}$, we mean the polynomial in $a,b,f$-classes
    obtained by expressing 
    $G$ in terms of $S_{j,k,l}$-classes first and then
    applying the operator $R_l$.
\end{theorem}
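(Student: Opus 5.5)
The plan has two parts. For the existence of $\wt P$, I will change variables and then invoke Corollary \ref{corollary intersection Sjkl from S10 and Sj1}. For the recursion, I will specialize the Virasoro relation \eqref{equation virasoro relations} to $m=l$ and then rewrite it in the $a,b,f$ basis using \eqref{equation Sjkl to abf}.

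For existence, I first express $P(a,b,f)$ as a polynomial in the $S_{j,k,l}$-classes. This is possible because $a_i,b_i^j,f_i$ are polynomials in the Chern characters: apply Newton's identities to the Künneth decomposition, i.e. invert \eqref{equation Sjkl to abf}, as in the displayed formulas for $a_2,b_2^j,f_2$. Corollary \ref{corollary intersection Sjkl from S10 and Sj1} then produces a polynomial $\wt P$ in the $S_{1,0,*}$ and $S_{j,1,*}$ classes with the same integral. By the first two lines of \eqref{equation Sjkl to abf}, each $S_{1,0,l}$ is a polynomial in the $a$'s and each $S_{j,1,l}$ is a polynomial in the $a$'s and $b$'s, so $\wt P$ is a polynomial in $a,b$ only. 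This settles the first assertion.

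For the recursion, I take $m=l$ and $D=G$ in \eqref{equation virasoro relations}. This gives
$$\int R_l(G) = -(1-g)\sum_{i+j=l} i!j!\int S_{1,0,i}S_{1,0,j}G + l!\,\rk(E)\int S_{1,2,l+1}G + l!\,\chi(E)\int S_{1,0,l}G.$$
I then substitute $S_{1,0,i}=\mu_i(a)/i!$, so the factorials in the quadratic term cancel to leave $\sum\mu_i\mu_j$. I also substitute the third line of \eqref{equation Sjkl to abf} with index $l+1$ for $S_{1,2,l+1}$. The key observation is that $\p\mu_{l+1}/\p a_{l+1}=(-1)^{l}(l+1)$, which follows from Newton's identities. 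So the coefficient of $f_{l+1}$ in $l!\,S_{1,2,l+1}$ is $(-1)^l$. The remaining terms have coefficient $\frac{1}{l+1}$ times the derivatives of $\mu_{l+1}$ involving $f_1,\dots,f_l$ together with the $\Gamma_{ik}$ terms. Solving for $(-1)^l\rk(E)\int f_{l+1}G$ and moving everything else to the right-hand side should yield \eqref{equation recursive f_lG}.

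The main obstacle is bookkeeping rather than ideas. First, the displayed formula writes $\mu_l$ where my derivation naturally produces $\mu_{l+1}$ in the last bracket, and the overall factor there must come out as $\rk(E)/(l+1)$. Second, the signs must be matched against the sign conventions in \eqref{equation virasoro relations}. Third, I need the value of $\p\mu_{l+1}/\p a_{l+1}$; to get it, I will differentiate Newton's identity $p_n=(-1)^{n-1}n e_n+\cdots$ with respect to $e_n$. Finally, iterating the recursion strictly lowers the total $f$-degree, since all $f_i$ on the right have $i\le l$ and $R_l$ applied to $G$ does not create higher-degree $f$-terms multiplied by extra $f$'s. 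Induction on the $f$-content then reproduces $\wt P$ explicitly, giving the recursive computation.
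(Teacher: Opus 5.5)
Your proposal follows the paper's proof: existence comes from Corollary~\ref{corollary intersection Sjkl from S10 and Sj1} together with \eqref{equation Sjkl to abf}, and the recursion comes from \eqref{equation virasoro relations} with $m=l$, $D=G$. Your value $\partial\mu_{l+1}/\partial a_{l+1}=(-1)^l(l+1)$ is correct, and so is your observation that the last bracket must involve $\mu_{l+1}$ rather than $\mu_l$, as the summation range $i,k\leqslant l+1$ already suggests.

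The one point to tighten is the termination measure. The terms $\partial_{a_i}\mu_{l+1}\, f_i\, G$ have as many $f$-factors as $f_{l+1}G$, and $R_l(G)$ can raise the largest $f$-index (e.g.\ $f_m\mapsto f_{m+l}$). So neither ``total $f$-degree'' nor maximal index decreases; induct instead on the sum of the indices of the $f$-factors, which drops strictly for every term on the right-hand side. The paper's phrase ``$f$-degree'' has the same imprecision.
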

\begin{proof}
    The first part, i.e., the existence of $\wt P(a,b)$ 
    follows directly from
    Corollary \ref{corollary intersection Sjkl from S10 and Sj1},
    using the relations \eqref{equation Sjkl to abf}.

    Taking $D=G(a,b,f)$ and $m=l$ in the Virasoro relations \eqref{equation virasoro relations} and then using the 
    relations \eqref{equation Sjkl to abf}, 
    we get the equation \eqref{equation recursive f_lG}.

    Now let $P(a,b,f)$ be any polynomial of weighted degree $\dim Q$.
    To compute $\wt P$, it is enough to assume $P$ to be a monomial.
    Let $l+1$ be the maximum of all indices $i$ for which $f_i$ appears in $P$.
    Then $P = f_{l+1} \cdot P'$ for some polynomial $P'$ of weighted degree $2(\dim Q - l)$.
    Using equation \eqref{equation recursive f_lG}, we have
    $$ \int P(a,b,f) = \int f_{l+1} \cdot P' = \int H(a,b,f)$$
    for some polynomial $H$
    such that the $f$-degree of $H$ is less than that of $P$.
    Now we do the same for $H$ and so on till we reach a polynomial with $f$-degree 0, which will be $\wt P$.
    For example, if $P$ is linear in $f$-classes, i.e.,
    $P = f_{l+1} \cdot P'(a,b)$, then $H$ is our $\wt P$. 
\end{proof}

Theorem \ref{theorem intersection fl times P} gives an algorithm to evaluate any polynomial 
$P(a,b,f)$ in terms of integrals of $a$ and $b$-classes. 
Below, we give some examples of evaluating $f$-classes in terms of intersection numbers of $a$ and $b$-classes
when $\dim Q$ is small.
\begin{example}
    Let $\dim Q =1$.
    Then evaluating the class $f_2$ against the fundamental class makes sense.
    Taking $D=1$ and $m=1$ in \eqref{equation virasoro relations},
    we get the following equation,
    $$ N \cdot \int S_{1,2,2} = 
    (2(N-r)(1-g) - \chi(E)) \cdot \int S_{1,0,1}\,.$$
    Let us write $\sum_{j=1}^g b_1^j b_1^{j+g}$ as $\theta$.
    Putting $S_{1,0,1}=a_1$ and 
    $S_{1,2,2}=(d-\deg(E))a_1- \theta - f_2$,
    we get the following
    $$\int f_2 = \left(\frac{1-(N-r-1)\deg(E) + (r^2+2r-N-Nr)(1-g)}{N} \right) \cdot \int a_1 - \int \theta\,.$$
    Equivalently,
    $$\int f_2 = (1-(r^\vee-1)\chi(E) + r^\vee(r^\vee-2)(1-g)) \cdot \frac{1}{N} \cdot \int a_1 - \int \theta\,,$$
    where $r^\vee := N-r$ is the rank of the bundle $\K^\vee$.
\end{example}

    \begin{example}\label{example integral in dim 2}
    Let $\dim Q =2$.
    We evaluate the class $f_3$.
    Taking $D=1$ and $m=2$ in \eqref{equation virasoro relations},
    we get the following equation,
    \begin{equation*}
        2N \int S_{1,2,3} 
        = \int (1-g)[4(N-r)S_{1,0,2}+S_{1,0,1}^2] - 2\nu S_{1,0,2}\,.
    \end{equation*}
    We can evaluate $f_3$ using the following identity
    $$f_3=2S_{1,2,3} - a_1^2f_1 +a_1f_2 + a_2f_1-2a_1 \theta + \Gamma_{12}\,,$$
    where $\theta = \sum_{j=1}^g b_1^j b_1^{j+g}$ and
    $\Gamma_{12} = \sum_{j=1}^g (b_1^j b_2^{j+g} + b_2^j b_1^{j+g})$.
    However, we still need to evaluate the term $a_1f_2$. 
    For that, we again use \eqref{equation virasoro relations} with $D=S_{1,0,1}=a_1$ and $m=1$,
    which gives
    $$ N \int S_{1,2,2} S_{1,0,1} 
    = \int 2 S_{1,0,2} + 2(N-r)(1-g)S_{1,0,1}^2 - \nu S_{1,0,1}^2\,.$$
    Simplifying and writing $r^\vee := N-r$, the rank of the bundle $\K^\vee$, we get
    \begin{equation*}
        \int a_1 f_2 = \int \left( \frac{1-(r^\vee-1)\chi(E)+r^\vee(r^\vee-2)(1-g)}{N} \right) a_1^2 + \frac{2}{N} \ a_2 -  a_1 \theta 
    \end{equation*}
    and
    \begin{align*}
        \int f_3 = \int \left( \frac{4 - (r^\vee-2)\chi(E) + r^\vee(r^\vee-4)(1-g)}{N} \right) a_2 - \frac{g}{N} \ a_1^2 + a_1 \theta + \Gamma_{12}\,.
    \end{align*}
    \end{example}

\vspace{.cm}

\begin{remark}\label{remark intersection theory on pairs}
    Given rank $r$ and degree $d$, let $\alpha=(\rk(E)-r,d-\deg(E)) \in K(C)$.
    We have seen in Remark \ref{remark Virasoro for pairs} that
    the Virasoro constraints are also satisfied for the moduli space $P(E,\alpha)$.
    So a similar calculation as in this section can also be carried out for $P(E,\alpha)$.
    When $E= \O^{\oplus N}$, this space parametrizes bundles
    of type $\alpha$ on $C$ with $N$ sections. 
    Intersection theory on this space is studied in \cite{Mar07}
    and it is shown that for certain polynomials, say $P$,
    the intersection number of $P$ on 
    the Quot scheme $\quot_{r,d}(\O^{\oplus N})$
    and the moduli space $P(\O^{\oplus N},\alpha)$ are equal.
    The results in this section show that if all $a$ and $b$-intersections are equal on both of these spaces, 
    then the same is true for all the tautological integrals.
    However, this equality in general is likely to be false
    due to the dependence of $f$-intersections on the boundary loci.
    See \cite{Mar07} for more details.
\end{remark}

\vspace{.2cm}
Once we have enough information about the invariant 
$[\quot_{r,d}(E)]_\inv$, we can compute all the tautological integrals using the invariant.
Recall that when $r=\rk(E)-1$, 
we computed the invariant $[\quot_{\rk(E)-1,d}(E)]_\inv$ 
in Section \ref{section computing invariant}.
Using this invariant,
we can evaluate all the tautological classes against
$[\quot_{\rk(E)-1,d}(E)]^\vir$.
In this case, the bundle $\K^\vee$ has rank 1, so we only have
$a_1, b_1^j$ classes.
Moreover, it is known that the product $\prod_{i=1}^s b_1^{j_i}b_1^{j_i+g}$
evaluates to zero unless $s \leqslant g$ and the $j_i$'s do not repeat.
\begin{theorem}\label{theorem intersections on Quot N-1}
    Let $\rk(E)=N$ and $\chi(E) = \nu$.
    Let $0 \leqslant s \leqslant g$ and $1 \leqslant j_1 < \cdots < j_s \leqslant g$ be integers
    such that the class $\prod_{i=1}^s b_1^{j_i}b_1^{j_i+g}$
    is nonzero.
    Then
    \begin{equation*}
        \int\limits_{[\quot_{N-1,d}(E))]^\vir} 
        \prod_{i=1}^s \left(b_1^{j_i}b_1^{j_i+g}\right) \cdot 
        a_1^{(\dim Q-s)} = N^{g-s}\,.
    \end{equation*}
    where $\dim Q = \nu + N(d - \deg(E)) - (1-g)$ is the expected dimension of $\quot_{N-1,d}(E)$.
\end{theorem}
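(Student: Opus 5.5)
The plan is to evaluate the integral as a pairing against Joyce's invariant computed in Theorem \ref{theorem invariant quot N-1}. By Theorem \ref{theorem wall crossing for quot scheme} we have $[\quot_{N-1,d}(E)]_\inv=\mf f_*[\quot_{N-1,d}(E)]^\vir$. Hence for any cohomology class $\Omega$ on $\Mdash^\pl_{(1,\du),1}$ we get
$$\int_{[\quot_{N-1,d}(E)]^\vir}\mf f^*\Omega=\big\langle \Omega,[\quot_{N-1,d}(E)]_\inv\big\rangle .$$

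\textbf{Step 1: realize the integrand as a pullback.} Since $\rk(\K^\vee)=1$, we have $a_1=S_{1,0,1}$ and $b_1^j=S_{j,1,1}$ on the Quot scheme. Cohomology of the rigidified stack corresponds, under the pullback along $\Pidash^\pl$, to classes on $\Mdash_{(1,\du),1}$ that pair to zero with $\im\acute T$. The class $S_{j,1,1}$ is already $B\mb G_m$-invariant, because the odd Künneth components do not change under twisting by a line bundle. The class $S_{1,0,1}$ is not invariant, so we correct it to $S_{1,0,1}-S_{+,0,1}$: twisting by $L$ changes both $\ch_1(\U)$ and $\ch_1(\V)$ by $c_1(L)$, with rank $r=1$ and $e=1$.

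On the Quot scheme the universal $\V$ is the trivial line, so $S_{+,0,1}$ restricts to $0$. Therefore we may take
$$\Omega=\prod_{i=1}^s S_{j_i,1,1}S_{j_i+g,1,1}\cdot (S_{1,0,1}-S_{+,0,1})^{\dim Q-s}.$$
I would verify directly from the formula for $\acute T$ that $\Omega$ annihilates $\im\acute T$. This makes the pairing with a class modulo $\im\acute T$ well defined.

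\textbf{Step 2: expand the residue and pair.} Write $n=\dim Q-s$. Expanding $(S_{1,0,1}-S_{+,0,1})^{n}$, only monomials matching the $s$-variables present in the invariant can pair nonzero by \eqref{equation definition s_jkl by dual}. The invariant contains no $s_{1,0,1}$, so only $(-S_{+,0,1})^n$ survives.

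In $\rho(z)$ the coefficient of $s_{+,0,1}^n$ is $(-z)^n/n!$. Pairing it against $(-1)^nS_{+,0,1}^n$ gives $n!$, so the total contribution is $z^n$.

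In $\sigma(N/z-s_{1,2,2})$, the only term pairing nontrivially with $\prod_i S_{j_i,1,1}S_{j_i+g,1,1}$ is $(N/z)^{g-s}\prod_i s_{j_i,1,1}s_{j_i+g,1,1}$. Terms containing $s_{1,2,2}$, or other $s_{j,1,1}$, pair to zero. The pairing of the odd monomials gives $\pm1$.

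Collecting the powers of $z$, we get $z^{-(\nu+N\du)}\,z^{n}\,z^{-(g-s)}$. Since $\dim Q=\nu+N\du-(1-g)$, this is exactly $z^{-1}$. So the residue equals $N^{g-s}$ up to the sign from the odd variables.

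\textbf{Step 3: the main obstacle.} The hardest part will be Step 1 together with the sign bookkeeping. I need to check that $\Omega$ genuinely descends to $\Mdash^\pl$, so that pairing with $[\quot]_\inv+\im\acute T$ is independent of the representative. I also need the supersign in \eqref{equation definition s_jkl by dual} for the ordered product $b_1^{j}b_1^{j+g}$ to come out $+1$. The sign should match the nonvanishing normalization used in \cite{Bu23} for $\theta$-classes. I would first test both points in the case $s=0$, where no odd classes appear, and then fix the odd-sign convention by comparing with \cite[Theorem 4.2]{Bu23}.
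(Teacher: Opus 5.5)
Your proposal is correct and follows the paper's proof essentially step by step. The paper uses the same class $\prod_i S_{j_i,1,1}S_{j_i+g,1,1}\,(S_{1,0,1}-S_{+,0,1})^{\dim Q-s}$ and checks that it lies in $\ker(\acute T^*)$, where $\acute T^*$ is the derivation $S_{j,k,l}\mapsto S_{j,k,l-1}$. It then pairs this class with the residue expansion $\sum_{k} N^k\sigma_{g-k}\{z^{\dim Q-g+k}\}(\rho(z))$ and stops at ``this can be easily calculated.'' Your tracking of the single surviving monomial, and your plan to fix the odd-variable sign by comparison with \cite[Theorem 4.2]{Bu23}, supply the computation the paper omits.
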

\begin{proof}
We recall the setup.
Let $\du = d -\deg(E)$.
We have the translation operator
$$\acute T : H_{\bullet}(\Mdash_{(1,\du),1}) \to  H_{\bullet+2}(\Mdash_{(1,\du),1})\,.$$
Then Theorem \ref{theorem invariant quot N-1} says that the invariant 
    $[\quot_{N-1,d}(E))]_\inv$ is given by
    \begin{equation}\label{equation quot N-1 invariant}
    [\quot_{N-1,d}(E))]_\inv = 
    \res_{z=0} \left( \frac{1}{z^{\nu+N\du}} \cdot \rho(z) \cdot \sigma\left(\frac{N}{z} -s_{1,2,2}\right)\right) + \textnormal{im} \acute{T}\,,
    \end{equation}
    in $H_\bullet(\Mdash_{(1,\du),1})/\im \acute T$,
    where $\rho$ and $\sigma$ are defined as in \eqref{equation definition rho}.
Let 
$$\acute{T}^* : H^\bullet(\Mdash_{(1,\du),1}) \to H^{\bullet -2}(\Mdash_{(1,\du),1})$$ 
denote the dual of the map $\acute{T}$. 
The cohomology ring $H^\bullet(\Mdash^\pl_{(1,\du),1})$, 
being the dual of $H_\bullet(\Mdash^\pl_{(1,\du),1})$,
can be identified as $\ker (\acute{T}^*)$ inside $H^\bullet(\Mdash_{(1,\du),1})$.
So the induced pairing between $H_\bullet(\Mdash_{(1,\du),1})/\im \acute{T}$ and $\ker (\acute{T}^*)$
gives the pairing between
$H_\bullet(\Mdash^\pl_{(1,\du),1})$ and $H^\bullet(\Mdash^\pl_{(1,\du),1})$.
The map $\acute{T}^*$ is a derivation with the action on $S_{j,k,l}$ is given by
$$\acute{T}^* (S_{j,k,l}) = S_{j,k,l-1} \,.$$
So in our case,  
\begin{equation}\label{elements of kernel Tdual}
(S_{1,2,2} - dS_{101}),\, (S_{j,1,1}S_{j+g,1,1}), \, 
(S_{+,0,1} - S_{1,0,1})
\in \ker(\acute{T}^*)\,.
\end{equation}
The classes $S_{j,k,l}$'s and $a_i, b_j^j, f_i$'s have the following relations,
\begin{equation}\label{relations of tautological classes}
    \mf h^* S_{1,0,1}=a_1, \quad  
    \mf h^* S_{j,1,1}=b_1^j, 
\quad \text{ and }  
\quad \mf h^* S_{1,2,2} = d^\vee a_1 - \sum_{j=1}^g {b_1^jb_1^{j+g}} \,.
\end{equation}
Also, note that the pullback of the universal sheaf $\V_1$ on the Quot scheme is trivial, which gives us 
$$ \mf h^*(S_{+,0,l}) = 0 \qquad \text{ for any } l>0 \,.$$
    Given $s$ and $j_1,\dots,j_s$ as in the statement of theorem,
    let us consider the class
    $$\zeta:= \prod_{i=1}^s \left(S_{j_i,1,1}S_{j_i+g,1,1} \right) \cdot
    (S_{1,0,1}-S_{+,0,1})^{\dim Q-s} \quad \in 
    H^\bullet(\Mdash_{(1,\du),1})\,.$$
    Using \eqref{elements of kernel Tdual}, we see that the class
    $\zeta$ is in $\ker(\acute{T}^*)$
    and using \eqref{relations of tautological classes},
    we have 
    $$\mf h^*(\zeta) = \prod_{i=1}^s \left(b_1^{j_i}b_1^{j_i+g}\right) \cdot 
        (a_1)^{\dim Q-s}\,.$$
    So the pairing of the cycle $[\quot_{N-1,d}(E))]_\inv$ and the class $\zeta$ gives us the required intersection, i.e.
    \begin{align*}
        \int\limits_{[\quot_{N-1,d}(E))]^\vir} 
        \prod_{i=1}^s  \left(b_1^{j_i}  b_1^{j_i+g}\right) \cdot (a_1)^{\dim Q-s} \
        = \left\langle  \zeta \,, \ [\quot_{N-1,d}(E))]_\inv \right\rangle\,.
    \end{align*} 
We simplify the residue appearing in the expression for 
$[\quot_{N-1,d}(E))]_\inv$ in
\eqref{equation quot N-1 invariant}. 
Let $\sigma_i$ be the coefficient of $(N/z)^i$ 
in the expression $\sigma(N/z-s_{1,2,2})$.
Clearly $\sigma_i$ is the
$i$-th symmetric polynomial in the variables
$$(s_{1,1,1}s_{1+g,1,1}-s_{1,2,2}), \cdots, (s_{g,1,1}s_{2g,1,1}-s_{1,2,2})\,.$$
Then the residue is equal to
\begin{align*}
    \sum_{k=0}^g \left( N^k \cdot \sigma_{g-k} \cdot \{z^{\nu+N(d-\deg(E))-1+k}\}(\rho(z)) \right)
\end{align*}
where $\{z^l\}(\rho(z))$ denotes the coefficient of $z^l$ in $\rho(z)$.
We write this expression as,
\begin{align*}
    \sum_{k=0}^g \left( N^k \cdot \sigma_{g-k} \cdot \{z^{\dim Q-g+k)}\}(\rho(z)) \right)
\end{align*}
So 
\begin{align*}
        \int\limits_{[\quot_{N-1,d}(E))]^\vir} 
        \prod_{i=1}^s & \left(b_1^{j_i}  b_1^{j_i+g}\right) \cdot (a_1)^{\dim Q-s}
         = \left\langle  \zeta \,, \ \sum_{k=0}^g \left( N^k \cdot \sigma_{g-k} \cdot \{z^{\dim Q-g+k)}\}(\rho(z)) \right) \right\rangle \,.
    \end{align*} 
This can be easily calculated to be $N^{g-s}$.
\end{proof}

\begin{remark}
    The product $\prod_{i=1}^s b_1^{j_i}b_1^{j_i+g}$ may be zero even when
    $0 < s \leqslant g$ and $1 \leqslant j_1 < \cdots < j_s \leqslant g$.
    The reason is the following.
    In this case, i.e., when $r=N-1$,
    we have a map $$\det: \quot_{N-1,d}(E) \to \Pic^{d^\vee}(C)$$
    which sends a quotient to the dual of its kernel.
    Let $\L$ denote the Poincaré line bundle on $C \times \Pic^{d^\vee}(C)$ such that $\det^*\L = \K^\vee$.
    We consider the Kunneth decomposition of the Chern classes of $\L$ to get the classes 
    $\mathfrak{b}_1^j \in H^1(\Pic^{d^\vee}(C))$. 
    The classes $b_1^j$'s
    are the pullbacks of the corresponding classes $\mathfrak{b}_1^j$'s via $\det$.
    So the product $\prod_{i=1}^s (b_1^{j_i}b_1^{j_i+g})$ becomes zero when $s> \dim (\im(\det))$.
    For example, when $E$ is the trivial bundle $\O_C^{\oplus N}$, the image of the map $\det$ is the Brill-Noether locus
    $W_0^d(C) = \{L \in \Pic^{d}(C) : H^0(L)\neq 0\}$ which has dimension $d$. So the product vanishes for $s>d$.
\end{remark}

\begin{remark}
An important special case is when $\dim Q =0$, 
so that the Quot scheme is a bunch of points.
Then the intersection of the class 1 over the virtual fundamental class $[\quot_{N-1,d}(E))]^\vir$ is $N^g$,
which gives the virtual count of maximal rank-one subbundles of $E$.
It is known that, for a general stable bundle $E$
and an integer $d$ such that $\dim \quot_{r,d}(E)=0$, 
the Quot scheme $\quot_{r,d}(E)$ is smooth.
Hence, for a general stable bundle $E$, the virtual count $N^g$ gives the number of maximal rank-one
subsheaves of $E$.
This result is already well known in greater generality; see, for example, \cite[Corollary 4.3]{Hol04}.
\end{remark}

    \vspace{1cm}

\newcommand{\etalchar}[1]{$^{#1}$}

\end{document}